\tikzset{node distance=2cm, auto}
\def\edge{\ar@{-}}
\def\dedge{\ar@{.}}
\newtheorem{thm}{Theorem}[section]
\newtheorem{pro}[thm]{Proposition}
\newtheorem{lem}[thm]{Lemma}
\newtheorem{cor}[thm]{Corollary}
\newtheorem{hypo}{Hypothesis}
\theoremstyle{definition}
\newtheorem{defn}[thm]{Definition}
\newtheorem{exa}[thm]{Example}
\newtheorem{rem}[thm]{Remark}
\def\k{{\mathbb K}}
\newcommand{\der}{\operatorname{\mathsf{Der}}}
\newcommand{\aut}{\operatorname{\mathsf{Aut}}}
\newcommand{\innder}{\operatorname{\mathsf{InnDer}}}
\newcommand{\hh}{\operatorname{\mathsf{HH^1}}}
\newcommand{\ad}{\operatorname{\mathsf{ad}}}
\newcommand{\wt}{\operatorname{\mathsf{wt}}}
\newcommand{\fract}{\operatorname{\mathsf{Fract}}}
\newcommand{\q}{\mathrm{\bf{q}}}
\renewcommand\deg{\operatorname{\mathsf{deg}}}
\newcommand\rk{\operatorname{\mathsf{rk}}}
\newcommand{\Hom}{\operatorname{\mathsf{Hom}}}
\newcommand{\ZC}{\operatorname{\mathsf{Z}}}
\newcommand{\NS}{\operatorname{\mathsf{N}}}
\newcommand{\supp}{\operatorname{\mathsf{supp}}}
\def\ch{{\mathcal H}}
\newcommand{\Z}{\mathbb{Z}}
\title{Derivations and Hochschild cohomology of quantum nilpotent algebras} 
\author[1]{St\'{e}phane Launois}
\affil[1]{Université de Caen Normandie, CNRS UMR 6139 LMNO, 14032 Caen, France.}
\author[2]{Samuel A.\ Lopes\thanks{Partially supported by CMUP -- Centro de Matem\'atica da Universidade do Porto, member of LASI, which is financed by national funds through FCT -- Funda\c c\~ao para a Ci\^encia e a Tecnologia, I.P., under the project with reference UID/00144.}}
\affil[2]{CMUP, Departamento de Matem\'atica, Faculdade de Ci\^encias, Universidade do Porto, Rua do Campo Alegre s/n, 4169--007 Porto, Portugal.}
\author[3]{Isaac Oppong}
\affil[3]{School of Computing and Mathematical Sciences,
University of Greenwich, 
Old Royal Naval College, Park Row,
London SE10 9LS, UK.}
\newcounter{marg}[section]
\begin{document}
\maketitle
\begin{abstract}
We compute the derivations of Quantum Nilpotent Algebras under a technical (but necessary) assumption on the center. As a consequence, we give an explicit description of the first Hochschild cohomology group of $U_q^+(\mathfrak{g})$, the positive part of the quantized enveloping algebra of a finite-dimensional complex simple Lie algebra $\mathfrak{g}$. Our results are obtained leveraging an initial cluster constructed by Goodearl and Yakimov. 
\end{abstract}


\section{Introduction}

Quantum nilpotent algebras (QNAs, for short), also known as CGL extensions (after Cauchon--Goodearl--Letzter), have been widely studied since their introduction in \cite{stl}, particularly in \cite{yak} and \cite{yak1}, where the authors construct quantum cluster algebra structures on QNAs satisfying a few additional conditions. 

The class of QNAs can be thought of as a large axiomatically defined class of algebras, modelled on the idea of deforming the enveloping algebra of a finite-dimensional nilpotent Lie algebra. QNAs include, for a symmetric Kac--Moody Lie algebra $\mathfrak g$ with triangular decomposition $\mathfrak g=\mathfrak g^+\oplus \mathfrak h\oplus \mathfrak g^-$, the quantum Schubert cell algebras $U_q(\mathfrak g^+ \cap w\mathfrak g^-)$, where $w$ is an element of the Weyl group and $U_q(\mathfrak g)$ is the corresponding quantum Kac--Moody algebra, with $q$ not a root of unity. In particular, if $\mathfrak g$ is a simple finite-dimensional complex Lie algebra, we obtain $U_q^+(\mathfrak{g}):=U_q(\mathfrak g^+)$ as a QNA. Further examples include quantum matrix algebras, generic quantized coordinate rings of affine, symplectic and euclidean spaces, and generic quantized Weyl algebras. Another interesting class of related examples are quantized coordinate rings of double Bruhat cells of finite-dimensional connected, simply connected complex simple algebraic groups, which are localizations of QNAs. The latter relation to QNAs was exploited in~\cite{GY20} to prove the Berenstein--Zelevinsky conjecture that these quantized coordinate rings admit quantum cluster algebra structures.

Going back to the algebras $U_q^+(\mathfrak g)$, for $\mathfrak g$ simple and finite dimensional of rank $n>1$, Yakimov's Rigidity Theorem \cite[Theorem 5.1]{yakAD} shows that the automorphism group of $U_q^+(\mathfrak g)$ is generated, as a semidirect product, by the torus of rank $n$ acting diagonally on the Chevalley generators, forming an abelian normal subgroup, and the finite group of diagram automorphisms of the Dynkin diagram of $\mathfrak g$. It is thus natural to investigate also the Lie algebra of derivations of $U_q^+(\mathfrak g)$, which can be thought of as infinitesimal transformations, and the corresponding first Hochschild cohomology group $\hh(U_q^+(\mathfrak g))$. To the best of our knowledge, this is generally unknown except for a few specific examples such as $\mathfrak g=\mathfrak{so}_5$ (see~\cite{YL09}) and $\mathfrak g=\mathfrak{sl}_4$ (see \cite{ll}). For the multiparameter case, the Lie algebra of derivations is known for $\mathfrak g=\mathfrak{so}_5$ (see \cite{xt}), $\mathfrak g=\mathfrak{so}_7$ (see \cite{lw}), and for $\mathfrak g=\der(\mathbb O)$, of type $G_2$ (see \cite{zt}). In all of these cases, the strategy used largely involves localization theory and Cauchon's deleting derivations algorithm \cite{cauchon}, along with some \textit{ad hoc} arguments. A similar strategy has also been used to study the derivations of other QNAs, such as the multiparameter quantum Weyl algebra (see~\cite{lR05}), the algebra of quantum matrices $\mathcal O_q (M_n)$ (see \cite{sltl}), and a quantum second Weyl algebra introduced in \cite{lo}. 

In this paper, we determine the derivations and the first Hochschild cohomology group of an arbitrary uniparameter QNA $R$ (as in Definition~\ref{dcgl}) having no central QNA generators and satisfying an additional technical (but necessary) assumption on the center. The latter assumption is satisfied in case all of the normal elements are central and also in case $R=U_q^+(\mathfrak g)$, as above (see Theorem~\ref{thm-Der-QNA}). We find that $\hh(R)$ is a free module over its center $\ZC(R)$, of rank equal to the rank of the maximal torus $\mathcal{H}$ acting rationally by automorphisms on $R$. In fact, we see that $\hh(R)$ can be identified with the $\ZC(R)$-module $\Hom_\Z(X(\mathcal{H}), \ZC(R))$, where $X(\mathcal{H})$ is the character group of $\mathcal{H}$. In particular, for $R=U_q^+(\mathfrak{g})$, where $\mathfrak g$ is simple and finite-dimensional of rank $n>1$, we find that $\hh(R)$ has a free $\ZC(R)$-basis given by the homogeneous derivations $\left\{D_i\right\}_{i=1}^n$ satisfying $D_i (E_j)=\delta_{ij}E_j$, where $E_1, \dots ,E_n$ are the Chevalley generators. We note that, by a recent result of Bell and Buzaglo \cite{bb}, the enveloping algebra of  $\der(R)$ is not noetherian.

We remark that, although the derivations $D_i$ are not locally nilpotent, over the field of complex numbers one can compute, for $\lambda\in\mathbb C$,
\begin{equation*}
e^{\lambda D_i}:=\sum_{k\geq 0} \frac{\lambda^k}{k!}D_i^k,
\end{equation*}
which is the automorphism of $U_q^+(\mathfrak{g})$ defined by $e^{\lambda D_i}(E_j)=
\begin{cases}
e^\lambda E_i & \text{if $i=j$;}\\
E_j & \text{if $i\neq j$.}
\end{cases}
$ Thus, the \textit{exponential map}  
\begin{equation}\label{E:intro:exp}
\mathsf{span}_{\mathbb C}\{ D_1, \ldots, D_n\} \longrightarrow \aut(U_q^+(\mathfrak{g})), \quad D\mapsto e^D
\end{equation}
surjects onto the normal subgroup $(\mathbb C^*)^n$ of $\aut(U_q^+(\mathfrak{g}))$, which is just the maximal torus $\mathcal H$. For a Lie algebra $\mathfrak g$ as above, the center $\ZC(U_q^+(\mathfrak{g}))$ is a nontrivial polynomial algebra, so the Lie algebra $\hh(U_q^+(\mathfrak{g}))$, which we can identify with $\bigoplus_{i=1}^n \ZC(U_q^+(\mathfrak{g})) D_i$, is infinite dimensional and in general nonabelian. By contrast, the Lie algebra $\mathsf{span}_{\mathbb C}\{ D_1, \ldots, D_n\}$ appearing in \eqref{E:intro:exp} is an $n$-dimensional abelian Lie algebra. 

The methods developed in the present paper can be adapted and extended in different directions. First, we can use similar techniques to show that the derivations of certain primitive quotients of QNAs considered in this paper are all inner, thus generalizing results from \cite{lo2} in the case $\mathfrak g=\mathfrak{so}_5$ and from \cite{lo} for $\mathfrak g$ of type $G_2$.  Secondly, we can adapt our techniques to compute the derivations of quantum (upper) cluster algebras. Finally, we can describe the Poisson derivations of Poisson Nilpotent Algebras (also known as Poisson CGLs) and their Poisson primitive quotients under assumptions mirroring the hypotheses of the present paper as well as the Poisson derivations of various Poisson cluster algebras. We will come back to these results in forthcoming publications.

The paper is organized as follows. In Section~\ref{section: derivations partially localized q-affine spaces} we introduce quantum affine spaces and quantum tori, and prove a general result on derivations of polynomial extensions of a finitely generated algebra, leading to an extension of \cite[Corollay 2.3]{op} to partially localized quantum affine spaces. Then, in Section~\ref{S:rec-QNA-localizations}, after recalling the definition of a QNA $R$, we review the construction introduced in \cite{yak} by Goodearl and Yakimov of the set $\{y_1, \ldots, y_N\}$ ($N$ is the Gelfand--Kirillov dimension of $R$) of elements which generate a quantum affine space $\mathcal{A}$, giving rise to a chain of embeddings
\begin{equation*}
\mathcal{A}\subseteq R\subseteq \mathcal{T}\subseteq \fract(R),
\end{equation*} 
where $\mathcal{T}$ is the quantum torus associated to $\mathcal{A}$ and $\fract(R)$ is the skew-field of fractions of $R$. In other words, using the language of cluster algebras, the set $\{y_1, \dots, y_N\}$ is an (initial) quantum cluster for $R$. The elements in $\{y_1, \ldots, y_N\}$ include the homogeneous prime elements of $R$ (which generate a quantum affine space of Gelfand--Kirillov dimension $n$) and give rise to several important localizations of $R$. The intersection of a family of these localizations is shown to equal $R$ (see also Appendix~\ref{App}), a result which will play a crucial role in the proofs of our main results in Section~\ref{section: derivation QNA}. Our main theorem holds under a few additional conditions which are satisfied in particular in case $R= U_q^+(\mathfrak{g})$, for $\mathfrak{g}$ simple of rank $n>1$. Our method consists in first localizing $R$ at an Ore set generated by some of the $y_i$ in such a way that the center of the localization remains equal to $\ZC(R)$. Then we use Corollary~\ref{prop: partially-localized-q-affine-spaces} and our previous result on intersections of localizations of $R$ to conclude that any derivation of $R$ can be decomposed as $\ad_x+\theta$, for some $x\in R$ and a derivation $\theta$ such that $\theta(y_i)\in\ZC(R)y_i$ (if $y_i$ is not central) or $\theta(y_i)\in\ZC(R)$ (if $y_i$ is central), for all $1\leq i\leq N$. Finally, we use the $X(\mathcal{H})$-grading of $R$ to show that the derivations $\theta$ are in one-to-one correspondence with the $\ZC(R)$-module $\Hom_\Z(X(\mathcal{H}), \ZC(R))$, leading to our main results, Theorem~\ref{thm-Der-QNA} on the structure of the space of derivations of $R$, and Corollary~\ref{cor-HH1-QNA}, stating that the first cohomology group $\hh(R)$ is a free $\ZC(R)$-module of rank $n$. We end this section with a series of examples which illustrate the indelible role of the hypotheses in our work. In the final Section~\ref{u+}, we apply our conclusions to the QNAs $U_q^+(\mathfrak{g})$ as above, which are stated in Theorem~\ref{finalder}.

\subsection{Notation and conventions}

Throughout this paper, we work over an arbitrary base field $\k$ of characteristic $0$. In particular, unless otherwise stated, all endomorphism and skew-derivations of $\k$-algebras are assumed to be $\k$-linear. Given integers $i, j\in\Z$, we set $[i,j]:=\{k\in\Z\mid i\leq k\leq j\}$.

As usual, an element of a list appearing with a hat is omitted from this list.

For a $\k$-algebra $A$, we will denote its center by $\ZC(A)$ and its group of $\k$-linear automorphisms by $\aut(A)$. The Lie algebra of $\k$-derivations of $A$ is denoted by $\der(A)$ and its Lie ideal of inner derivations is $\innder(A):=\{\ad_x\mid x\in A\}$, so that $\hh(A)=\der(A)/\innder(A)$ is the first Hochschild cohomology group of $A$. We say that the elements $a, b\in A$ \textit{quasi-commute} if there is some $\xi\in\k^*:=\k\setminus\{0\}$ such that $ab=\xi ba$.

Quantum nilpotent algebras (QNAs, for short) are given in Definition~\ref{dcgl} as iterated Ore extensions. For readers less familiar with Ore extensions (also known as skew polynomial rings) we refer to \cite{GW} and \cite{bg}.

\section{Structure theorem for derivations of partially localized quantum affine spaces}
\label{section: derivations partially localized q-affine spaces}

Derivations of quantum tori were computed in \cite[Corollary 2.3]{op}, where it was proved that every derivation can be expressed uniquely as the sum of an inner derivation and a central derivation, that is, a derivation that acts by multiplication by central elements on the canonical generators of the quantum torus. In the same article, derivations of partially localized quantum affine spaces were proved to be the sum of an inner derivation and a scalar derivation under the assumption that the associated quantum torus is centerless (or, equivalently, simple by \cite[Proposition 1.3]{McConnellPettit1988}), see \cite[Corollary 2.6]{op}. In this section, we compute the derivations of partially localized quantum affine spaces without this simplicity condition but with the assumption that non-localized generators are central. This is the assumption we need in Section \ref{section: derivation QNA}. This assumption is somehow natural since every uniparameter quantum torus is isomorphic, in the generic case, to a commutative Laurent polynomial ring over a simple quantum torus~\cite[Proposition 2.3]{Richard2002}.

Before we state the main results of this section, we fix the notation.

Let $\textbf{q}:=(q_{ij})\in M_n(\k^*)$ be a multiplicatively skew-symmetric matrix, that is, $q_{ii}=1$ and $q_{ij}=q_{ji}^{-1}$ for all $i,j$. 

The quantum affine space associated to $\textbf{q}$, denoted by $\mathcal{A}_\textbf{q}:=\k_{\textbf{q}}[T_1, \ldots, T_n]$, is the $\k$-algebra generated by $T_1$, ..., $T_n$ subject to the relations:  
$$T_jT_i=q_{ij}T_iT_j$$
for all $i,j$. 

Quantum affine spaces are well-understood algebras. They can be presented as iterated Ore extensions over $\k$, and so they are noetherian domains, and the monomials $\underline{T}^{\underline{\alpha}}:=T_{1}^{\alpha_1} \cdots T_{n}^{\alpha_n} $, with $\underline{\alpha}:=(\alpha_1, \dots,\alpha_n) \in (\Z_{\geq 0})^n$, form a basis of $\mathcal{A}_\textbf{q}$ as a $\k$-vector space. 

It is easy to check that each generator $T_i$ is a (regular) normal element of $\mathcal{A}_\textbf{q}$, and the set $E:=\left\{ \lambda  T_{1}^{\alpha_1} \cdots T_{n}^{\alpha_n} \mid \lambda \in \k^*, ~ \alpha_1, \dots,\alpha_n \in \Z_{\geq 0}\right\} $ satisfies the Ore conditions on both sides. The resulting localization 
$$\mathcal{T}_\textbf{q}:=\k_{\textbf{q}}[T_1^{\pm 1}, \ldots, T_n^{\pm 1}]= \mathcal{A}_\textbf{q} E^{-1}$$
is referred to as the quantum torus associated to the multiplicatively skew-symmetric matrix $\textbf{q}$. 

We start with a general result which, for lack of a reference, we include here. It will be used to prove the useful corollary at the end of this section (see also the comment preceding Example~\ref{ex:qweylalg:2}).

\begin{thm}
\label{T:generic}
Let $A$ be a $\k$-algebra and set $R=A[X_1, \ldots, X_m]$, the polynomial algebra over $A$ on $m$ commuting variables. Then
\begin{equation*}
\ZC(R)=\ZC(A)[X_1, \ldots, X_m]\simeq \ZC(A)\otimes_{\k}\k[X_1, \ldots, X_m]. 
\end{equation*}

Additionally, assume that $A$ is finitely generated and that
\begin{align*}
\der(A)=\innder(A)\oplus M, 
\end{align*}
for some $\ZC(A)$-module $M$. Then
\begin{equation*}
\der (R) = \innder (R) \oplus \overline M \oplus \bigoplus_{j=1}^m \ZC(R) \partial_j, 
\end{equation*}
where
\begin{itemize}
\item $\overline M=\ZC(R)M\simeq \ZC(R)\otimes_{\ZC(A)} M\simeq \k[X_1, \ldots, X_m]\otimes_\k M$, so that $D\in M\subseteq\der(A)$ is extended to a derivation of $R$ by setting $D(X_i)=0$, for all $i\in [1,m]$;
\item $\partial_j$ is the derivation of $R$ defined by $\partial_j(A)=0$ and $\partial_j(X_i)=\delta_{ij}$, for all $i, j\in [1,m]$.
\end{itemize}
\end{thm}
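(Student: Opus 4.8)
The plan is to prove the two assertions in turn, the first being routine and feeding into the second. For the center, write a general element of $R$ in the $A$-basis of monomials $\underline{X}^{\underline\alpha}=X_1^{\alpha_1}\cdots X_m^{\alpha_m}$; since each $X_i$ is central in $R$, such an element is central iff it commutes with $A$, and comparing coefficients of the monomials shows this happens iff every coefficient lies in $\ZC(A)$. Hence $\ZC(R)=\ZC(A)[X_1,\ldots,X_m]$, and the identification with $\ZC(A)\otimes_\k\k[X_1,\ldots,X_m]$ is the standard one, valid because $\k[X_1,\ldots,X_m]$ is central in $R$. In particular $\ZC(R)$ is free as a $\ZC(A)$-module with basis $\{\underline{X}^{\underline\alpha}\}$, a fact used repeatedly below.

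The crux of the second assertion is the observation that \emph{every} derivation $D$ of $R$ satisfies $D(X_j)\in\ZC(R)$ for all $j$: applying $D$ to the identity $X_j a=a X_j$ (for $a\in A$) and using that $X_j$ is central gives $[D(X_j),a]=0$, and since the $X_i$ are central, anything centralizing $A$ centralizes all of $R$, hence lies in $\ZC(R)$. Consequently $\sum_{j=1}^m D(X_j)\partial_j$ is a well-defined derivation of $R$ (a central element times a derivation is a derivation), so $D_1:=D-\sum_{j=1}^m D(X_j)\partial_j$ is a derivation with $D_1(X_j)=0$ for all $j$, and it suffices to show that such a $D_1$ lies in $\innder(R)\oplus\overline M$.

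For such a $D_1$ and for $a\in A$, write $D_1(a)=\sum_{\underline\alpha}c_{\underline\alpha}(a)\,\underline{X}^{\underline\alpha}$ with $c_{\underline\alpha}(a)\in A$; because $D_1$ is a derivation annihilating the central elements $X_j$, comparing coefficients of each monomial shows that every $c_{\underline\alpha}\colon A\to A$ is a $\k$-derivation. This is where finite generation of $A$ enters: only finitely many monomials occur in the finitely many values $D_1(a_k)$ on a fixed finite generating set, and a derivation of $A$ vanishing on generators is zero, so $c_{\underline\alpha}=0$ for all but finitely many $\underline\alpha$. Writing $c_{\underline\alpha}=\ad_{x_{\underline\alpha}}+\mu_{\underline\alpha}$ with $x_{\underline\alpha}\in A$, $\mu_{\underline\alpha}\in M$ (and $x_{\underline\alpha}=\mu_{\underline\alpha}=0$ whenever $c_{\underline\alpha}=0$), set $x:=\sum_{\underline\alpha}x_{\underline\alpha}\underline{X}^{\underline\alpha}\in R$ — a genuine element of $R$ thanks to the finiteness — and $\theta:=\sum_{\underline\alpha}\underline{X}^{\underline\alpha}\,\overline{\mu_{\underline\alpha}}\in\overline M$. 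Using that the $\underline{X}^{\underline\alpha}$ are central, one checks that $\ad_x$ and $\theta$ both annihilate the $X_j$ and that $\ad_x+\theta$ agrees with $D_1$ on $A$ (the key identity being $\ad_{x_{\underline\alpha}\underline{X}^{\underline\alpha}}|_A=\underline{X}^{\underline\alpha}\cdot\ad_{x_{\underline\alpha}}|_A$), whence $D_1=\ad_x+\theta$ and so $D=\ad_x+\theta+\sum_j D(X_j)\partial_j$, proving $\der(R)=\innder(R)+\overline M+\sum_j\ZC(R)\partial_j$.

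It remains to check that this sum is direct and to identify $\overline M$. If $\ad_x+\theta+\sum_j c_j\partial_j=0$ with $x\in R$, $\theta\in\overline M$, $c_j\in\ZC(R)$, then evaluating at $X_k$ annihilates the first two terms and forces $c_k=0$ for every $k$; evaluating the remaining identity $\ad_x+\theta=0$ at $a\in A$ and comparing coefficients of each $\underline{X}^{\underline\alpha}$ (legitimate since $\ZC(R)$ is $\ZC(A)$-free on the monomials) yields $\ad_{x_{\underline\alpha}}+\mu_{\underline\alpha}=0$ in $\der(A)$ for each $\underline\alpha$, so the hypothesised directness gives $\mu_{\underline\alpha}=0$, hence $\theta=0$ and then $\ad_x=0$. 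The same coefficient comparison, together with $\ZC(R)\otimes_{\ZC(A)}M=\bigoplus_{\underline\alpha}\underline{X}^{\underline\alpha}\otimes M$ and $\ZC(R)\simeq\ZC(A)\otimes_\k\k[X_1,\ldots,X_m]$, gives the isomorphisms $\overline M=\ZC(R)M\simeq\ZC(R)\otimes_{\ZC(A)}M\simeq\k[X_1,\ldots,X_m]\otimes_\k M$. The single delicate point in the argument is the one flagged above — ensuring the inner summand is $\ad$ of an actual element of $R$ rather than an infinite formal $\ZC(R)$-linear combination — which is precisely what finite generation of $A$ guarantees; the rest is bookkeeping with the monomial basis.
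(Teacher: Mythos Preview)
Your proof is correct and follows essentially the same route as the paper's: compute the center by comparing monomial coefficients, subtract off $\sum_j D(X_j)\partial_j$ to reduce to a derivation killing the $X_j$, decompose the resulting $A$-valued coefficient derivations using $\der(A)=\innder(A)\oplus M$ and finite generation, and verify directness by evaluating at $X_k$ and then at $a\in A$. The only cosmetic difference is that you justify $D(X_j)\in\ZC(R)$ by differentiating $X_ja=aX_j$, whereas the paper invokes the general fact $D(\ZC(R))\subseteq\ZC(R)$; both are immediate.
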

\begin{proof}
First, let $z=\sum z_{\underline{\alpha}} \underline{X}^{\underline{\alpha}}$ be a central element of $R$, where the sum runs over all $\underline{\alpha}=(\alpha_1, \dots, \alpha_m) \in (\Z_{\geq 0})^m$ and where all but a finite number of  $z_{\underline{\alpha}} \in A$ are zero. 
Since the $X_j$ are central, one can easily check that $z$ is central if and only if $z a = a z$ for all $a\in A$, that is, if and only if $z_{\underline{\alpha}} a=a z_{\underline{\alpha}}$ for all $a\in A$ and all $z_{\underline{\alpha}}$. 
Thus, $z$ is central if and only if $z_{\underline{\alpha}} \in \ZC(A)$ for all $\underline{\alpha}$, and the first claim follows. 

It is easy to check that the $\partial_j$ define derivations of $R$ and that a derivation $D$ of $A$ can be uniquely extended to a derivation of $R$ by setting $D(X_i)=0$ for all $i\in [1,m]$. Moreover, under such an extension, $zD\in\der(R)$ for all $z\in \ZC(R)$.

Now, let $D\in\der(R)$. Since $D(\ZC(R))\subseteq \ZC(R)$, there are $z_j\in \ZC(R)$ such that $D(X_j)=z_j$ for all $j\in [1,m]$. Thus, replacing $D$ with $D-\sum_{j=1}^m z_j \partial_j\in\der(R)$, we can assume, without loss of generality, that $D(X_i)=0$ for all $i\in[1,m]$. For $a\in A$, we can write \[D(a)=\sum_{\underline{\alpha}} D_{\underline{\alpha}}(a)\underline{X}^{\underline{\alpha}},\] a finite sum with $D_{\underline{\alpha}}(a)\in A$ for all ${\underline{\alpha}}$. It is straightforward to check that the maps $D_{\underline{\alpha}}$ are in fact $\k$-derivations of $A$. Since $A$ is finitely generated as a $\k$-algebra, it follows that there is a finite set $K\subseteq (\Z_{\geq 0})^m$ such that
\begin{align*}
D=\sum_{\underline{\alpha}\in K} \underline{X}^{\underline{\alpha}}D_{\underline{\alpha}},
\end{align*}
where $D_{\underline{\alpha}}$ is extended to a derivation of $R$ as explained above, with $D_{\underline{\alpha}}(X_i)=0$ for all $i\in [1,m]$. By hypothesis, for each $\underline{\alpha}\in K$, there exist $u_{\underline{\alpha}}\in A$ and $E_{\underline{\alpha}}\in M$ such that $D_{\underline{\alpha}}=\ad_{u_{\underline{\alpha}}}+E_{\underline{\alpha}}$. Putting all these together shows that
\begin{align*}
D=\ad_u+\sum_{\underline{\alpha}\in K} \underline{X}^{\underline{\alpha}} E_{\underline{\alpha}}\in\innder(R)+\overline M,
\end{align*}
where $u=\sum_{\underline{\alpha}\in K}u_{\underline{\alpha}}\underline{X}^{\underline{\alpha}} \in R$.

At this stage, we have proved that
\begin{align*}
\der (R) = \innder (R) + \overline M + \sum_{j=1}^m \ZC(R) \partial_j.
\end{align*}

To prove the direct sum decomposition in the statement, assume that 
\begin{align}\label{E:1:T:general}
\ad_u+E+ \sum_{j=1}^m z_j \partial_j=0,
\end{align}
with $u\in R$, $z_j\in \ZC(R)$ and $E=\sum_{\underline{\alpha}} \underline{X}^{\underline{\alpha}} E_{\underline{\alpha}}$, a finite sum with $E_{\underline{\alpha}}\in M$ for every ${\underline{\alpha}}$. Evaluating~\eqref{E:1:T:general} at $X_k$ leads to $z_k=0$, for all $k\in [1,m]$.

Write $u=\sum_{\underline{\alpha}}u_{\underline{\alpha}}\underline{X}^{\underline{\alpha}}$, a finite sum with $u_{\underline{\alpha}}\in A$, for all ${\underline{\alpha}}$. It follows that $\sum_{\underline{\alpha}}\underline{X}^{\underline{\alpha}}\left(\ad_{u_{\underline{\alpha}}}+E_{\underline{\alpha}} \right)=0$. Since $u_{\underline{\alpha}}\in A$ and $E_{\underline{\alpha}}\in M$, we have $\left(\ad_{u_{\underline{\alpha}}}+E_{\underline{\alpha}} \right)(A)\subseteq A$, thus evaluating at an arbitrary $a\in A$ we deduce that $\ad_{u_{\underline{\alpha}}}+E_{\underline{\alpha}}=0$ as a derivation of $A$, for all $\underline{\alpha}$. Now, from $\der(A)=\innder(A)\oplus M $ we deduce that $\ad_{u_{\underline{\alpha}}}=0=E_{\underline{\alpha}}$ as derivations of $A$. But also $\ad_{u_{\underline{\alpha}}}(X_i)=0=E_{\underline{\alpha}}(X_i)$ for all $i\in [1,m]$, so $\ad_{u_{\underline{\alpha}}}=0=E_{\underline{\alpha}}$ as derivations of $R$, for all $\underline{\alpha}$. We conclude that
\begin{align*}
\ad_u= \sum_{\underline{\alpha}}\underline{X}^{\underline{\alpha}}\ad_{u_{\underline{\alpha}}}=0=
\sum_{\underline{\alpha}}\underline{X}^{\underline{\alpha}} E_{\underline{\alpha}}=E,
\end{align*}
as desired. 
\end{proof}

Recall that a quantum torus $\mathcal{T}_\textbf{q}$ is simple if and only if its center is reduced to $\k$, by \cite[Proposition 1.3]{McConnellPettit1988}.

\begin{cor}
\label{prop: partially-localized-q-affine-spaces}
Let $\mathcal{T}_\textbf{q}:=\k_{\textbf{q}}[T_1^{\pm 1}, \ldots, T_n^{\pm 1}]$ be a simple quantum torus. Set $R:= \mathcal{T}_\textbf{q}[X_1, \dots , X_m]$, a commutative polynomial ring over $\mathcal{T}_\textbf{q}$. Then: 
\begin{enumerate}[(a)] 
\item $\ZC(R)= \k[X_1, \dots, X_m]$;
\item $\der (R) = \innder (R) \oplus \bigoplus_{i=1}^n \ZC(R) D_i \oplus \bigoplus_{j=1}^m \ZC(R) \partial_j$, where $D_i$ and $\partial_j$ are the derivations of $R$ defined by: 
\begin{equation*}
D_i(T_k)= \delta_{ik}T_k \mbox{ and } D_i(X_k)=0; 
\end{equation*}
\begin{equation*}
\partial_j(T_k)= 0 \mbox{ and } \partial_j(X_k)=\delta_{jk}. 
\end{equation*}
\end{enumerate}
\end{cor}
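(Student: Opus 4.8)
The plan is to derive Corollary~\ref{prop: partially-localized-q-affine-spaces} as a direct application of Theorem~\ref{T:generic} with $A=\mathcal{T}_\textbf{q}$. The crucial input is that a simple quantum torus has all its derivations accounted for: by \cite[Corollary 2.3]{op}, $\der(\mathcal{T}_\textbf{q})=\innder(\mathcal{T}_\textbf{q})\oplus\bigoplus_{i=1}^n\k D_i$, where $D_i$ is the central derivation sending $T_k\mapsto\delta_{ik}T_k$; here the "central" coefficients are scalars precisely because $\ZC(\mathcal{T}_\textbf{q})=\k$ by simplicity and \cite[Proposition 1.3]{McConnellPettit1988}. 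Thus $\der(\mathcal{T}_\textbf{q})=\innder(\mathcal{T}_\textbf{q})\oplus M$ with $M=\bigoplus_{i=1}^n\k D_i$, a $\ZC(\mathcal{T}_\textbf{q})=\k$-module, and $\mathcal{T}_\textbf{q}$ is finitely generated (by $T_1^{\pm1},\ldots,T_n^{\pm1}$). So the hypotheses of Theorem~\ref{T:generic} are met.

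Next I would simply read off the conclusions. Part~(a) is immediate from the first assertion of Theorem~\ref{T:generic}: $\ZC(R)=\ZC(\mathcal{T}_\textbf{q})[X_1,\ldots,X_m]=\k[X_1,\ldots,X_m]$. For part~(b), Theorem~\ref{T:generic} gives
\begin{equation*}
\der(R)=\innder(R)\oplus\overline M\oplus\bigoplus_{j=1}^m\ZC(R)\partial_j,
\end{equation*}
where $\overline M=\ZC(R)M$ and each $D\in M$ is extended to $R$ by $D(X_i)=0$. Since $M=\bigoplus_{i=1}^n\k D_i$ and $\overline M=\ZC(R)\otimes_{\k}M$, we get $\overline M=\bigoplus_{i=1}^n\ZC(R)D_i$, with $D_i$ extended exactly as in the statement ($D_i(T_k)=\delta_{ik}T_k$, $D_i(X_k)=0$). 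The derivations $\partial_j$ of Theorem~\ref{T:generic} (vanishing on $A=\mathcal{T}_\textbf{q}$, hence on each $T_k$, and with $\partial_j(X_k)=\delta_{jk}$) are precisely those in the statement. Combining these identifications yields the displayed decomposition.

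There is essentially no obstacle here; the only point requiring a word of care is to note that the $D_i$ appearing in \cite[Corollary 2.3]{op} for a \emph{simple} torus have scalar (rather than merely central) coefficients, so that $M$ is genuinely a $\k$-module as required by Theorem~\ref{T:generic}, and that $\innder(\mathcal{T}_\textbf{q})\cap M=0$ with the directness passing to $R$ via the theorem's argument. One should also make explicit that the extension of $D_i$ to $R$ used in $\overline M$ matches the formula $D_i(X_k)=0$ in the statement, which is exactly the extension prescribed in Theorem~\ref{T:generic}. With these remarks the proof is a one-line invocation.

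\begin{proof}
Apply Theorem~\ref{T:generic} with $A=\mathcal{T}_\textbf{q}$ and $m$ as given. The algebra $\mathcal{T}_\textbf{q}$ is finitely generated over $\k$ (by $T_1^{\pm 1},\ldots,T_n^{\pm 1}$). Since $\mathcal{T}_\textbf{q}$ is simple, \cite[Proposition 1.3]{McConnellPettit1988} gives $\ZC(\mathcal{T}_\textbf{q})=\k$, and then \cite[Corollary 2.3]{op} yields
\begin{equation*}
\der(\mathcal{T}_\textbf{q})=\innder(\mathcal{T}_\textbf{q})\oplus M,\qquad M=\bigoplus_{i=1}^n\k D_i,
\end{equation*}
where $D_i$ is the derivation of $\mathcal{T}_\textbf{q}$ with $D_i(T_k)=\delta_{ik}T_k$. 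As $\ZC(\mathcal{T}_\textbf{q})=\k$, the space $M$ is a $\ZC(\mathcal{T}_\textbf{q})$-module, so the hypotheses of Theorem~\ref{T:generic} hold.

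The first assertion of Theorem~\ref{T:generic} gives
\begin{equation*}
\ZC(R)=\ZC(\mathcal{T}_\textbf{q})[X_1,\ldots,X_m]=\k[X_1,\ldots,X_m],
\end{equation*}
which is part~(a). The second assertion gives
\begin{equation*}
\der(R)=\innder(R)\oplus\overline M\oplus\bigoplus_{j=1}^m\ZC(R)\partial_j,
\end{equation*}
where each $D_i$ is extended to $R$ by $D_i(X_k)=0$, each $\partial_j$ is defined by $\partial_j(\mathcal{T}_\textbf{q})=0$ and $\partial_j(X_k)=\delta_{jk}$, and $\overline M=\ZC(R)M=\bigoplus_{i=1}^n\ZC(R)D_i$ since $M=\bigoplus_{i=1}^n\k D_i$. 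The extended derivations $D_i$ satisfy $D_i(T_k)=\delta_{ik}T_k$ and $D_i(X_k)=0$, while $\partial_j(T_k)=0$ and $\partial_j(X_k)=\delta_{jk}$, exactly as in the statement. This proves part~(b).
\end{proof}
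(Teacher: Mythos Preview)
Your proposal is correct and follows essentially the same approach as the paper's own proof: apply Theorem~\ref{T:generic} with $A=\mathcal{T}_\textbf{q}$, use simplicity (via \cite[Proposition 1.3]{McConnellPettit1988}) to get $\ZC(\mathcal{T}_\textbf{q})=\k$, and invoke \cite[Corollary 2.3]{op} to obtain the required decomposition $\der(\mathcal{T}_\textbf{q})=\innder(\mathcal{T}_\textbf{q})\oplus M$ with $M=\bigoplus_{i=1}^n\k D_i$. Your write-up is simply a more explicit unpacking of the same argument.
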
 
\begin{proof}
The proof follows directly from Theorem~\ref{T:generic} applied to $A=\mathcal{T}_\textbf{q}$, by noting that $\mathcal{T}_\textbf{q}$, being simple, has trivial center, and invoking \cite[Corollary 2.3]{op}, which shows that every derivation of a simple quantum torus is uniquely the sum of an inner derivation and a scalar derivation, that is, a derivation that acts by scalar multiplication on the generators of the quantum torus.
\end{proof}


\section{Prime elements and localizations of QNAs}\label{S:rec-QNA-localizations}

In this section, using the algorithmic construction, due to Goodearl--Yakimov \cite{yak}, of homogeneous elements $y_1, \dots , y_N$ of a QNA $R$, we consider certain localizations of $R$ at Ore sets generated by some of these elements. After proving a technical result that shows that $0$ is the only normal element that is a multiple of a non-normal $y_i$, we prove the main result of the section on intersections of certain localizations of $R$. 

\subsection{Homogeneous prime elements}
\begin{defn}
\label{dcgl}
Suppose that a ring $R$ can be written as an iterated Ore extension of length $N$  as follows:
\begin{equation}
\label{T3}
R=\k[x_1][x_2;\sigma_2,\delta_2]\cdots
[x_N;\sigma_N,\delta_N],
\end{equation} 
where, for $k\in [1,N]$, $\sigma_k$ and $\delta_k$ are, respectively, $\k$-linear automorphisms and $\sigma_k$-derivations of 
\begin{equation}
\label{R_k} 
R_{k-1}:=\k[x_1][x_2;\sigma_2,\delta_2]\cdots
[x_{k-1};\sigma_{k-1},\delta_{k-1}], \mbox{ with } R_0:=\k.
\end{equation}

This iterated Ore extension $R$ is said to be a \textit{quantum nilpotent algebra} (QNA) if there exists a torus $\mathcal{H}=(\k^*)^m$ that acts rationally  by 
$\k$-automorphism on $R$ such that $x_{1},\ldots, x_{N}$ are $\mathcal{H}$-eigenvectors, and the  following are satisfied:
\begin{enumerate}[(i)]
\item for all $k\in [2,N]$ and $k>j$, we have that $\sigma_k(x_j)=\lambda_{kj}x_j$ for some $\lambda_{kj}\in \k^*$.
\item for every $k\in [2,N],$ the $\sigma_k$-derivation $\delta_k$ is locally nilpotent on the subalgebra $R_{k-1}$ of $R$.
\item for every $k\in [1,N],$ there exists $h_k\in \mathcal{H}$ and some $q_k\in \k^*$ which is not a root of unity such that $(h_k\cdot)\mid R_{k-1}=\sigma_k$ and $h_k\cdot x_k=q_kx_k.$
\end{enumerate}

If there exist $q \in \k^*$ not a root of unity and a skew-symmetric integer matrix $A =(a_{ij}) \in \mathcal{M}_N(\Z)$ such that $\lambda_{kj} =q^{a_{kj}}$ for all $j<k$, then $R$ is a {\em uniparameter} QNA. 
\end{defn}
Note that in  the original definition  \cite[Definition 3]{stl} there is the additional condition that there exist $q_k\in \k^*$ not a root of unity such that $\sigma_k \delta_k=q_k\delta_k \sigma_k.$ However, this condition was later proved to follow from the ones listed above  (see  \cite[(3.1)]{yak} for the necessary details).

Observe that, for degree reasons, the group of invertible elements of a QNA is reduced to $\k^*.$

Let $n$ be the {\em rank} of the QNA $R$; that is, $\rk(R):=|\{i \in [1,N] \mid \delta_i=0\}|=n$. The rank of $R$ is also equal to the number of height one prime ideals of $R$ which are invariant under $\mathcal{H}$, see \cite[(4.3)]{yak}. It follows from \cite[Theorem~5.3]{yak} that we can (and will) assume that $m=n$, so that $\mathcal{H}=(\k^*)^n$. In other words, we assume that $\mathcal{H}$ is the largest torus giving $R$ a QNA structure.

Let $X(\mathcal{H})$ denote the set of all rational characters of the torus $\mathcal{H}.$ Then, $X(\mathcal{H})$
is an abelian group called the \textit{character group} of $\mathcal{H}.$  The action of $\mathcal{H}$ on $R$ induces an $X(\mathcal{H})$-grading of $R.$ The $\mathcal{H}$-eigenvectors are exactly the non-zero homogeneous elements under this grading (see \cite[Section 3.2]{yak1}). An element $u\in R$ is \textit{normal} if 
$uR=Ru.$ A non-zero normal element $p\in R$ is said to be a \textit{prime element} if the ideal $pR$ is completely prime.  Finally, a prime element $p\in R$ that is also an  $\mathcal{H}$-eigenvector is simply called a \textit{homogeneous prime element} or a \textit{prime $\mathcal{H}$-eigenvector}.

The algorithmic construction due to Goodearl--Yakimov of the homogeneous prime elements relies on the existence of a \textit{colouring map} $\mu: [1, N] \rightarrow [1,n]$. Attached to such a map, one can define two functions, the \textit{predecessor} function $p=p_\mu:[1,N]\rightarrow [1,N]\sqcup \{-\infty\}$ and the \textit{successor} function $s=s_\mu:[1,N]\rightarrow [1,N]\sqcup \{+\infty\}$ by: 
$$
p(k)=\begin{cases}
\text{max} \ \{j<k\mid \mu(j)=\mu(k)\} & \text{if $\exists j<k$ such that $\mu(j)=\mu(k),$}\\
-\infty & \text{otherwise,}
\end{cases}
$$ and 
$$
s(k)=\begin{cases}
\text{min} \ \{j>k\mid \mu(j)=\mu(k)\} & \text{if $\exists j>k$ such that $\mu(j)=\mu(k),$}\\
+\infty & \text{otherwise.}
\end{cases}
$$

In \cite{yak}, the authors construct a colouring map $\mu: [1, N] \rightarrow [1,n]$ and use it to describe the homogeneous prime elements of a QNA. We recall their result below.

\begin{thm} \cite[Theorem 4.3]{yak}
\label{homo}
Let $R$ be a QNA of rank $n$ as in \eqref{T3}. There exists a surjective function $\mu: [1, N] \rightarrow [1,n]$ such that the following homogeneous elements $y_1, \ldots, y_N$ of $R$ can recursively and uniquely be constructed as follows:
\begin{equation}
\label{5T}
y_k:=\begin{cases}
y_{p(k)}x_k-c_k, & \text{if $p(k)\neq -\infty$},\\
x_k, & \text{if $p(k)= -\infty$},
\end{cases}
\end{equation}
for some $c_k\in R_{k-1}.$ The elements $y_1, \ldots, y_N$  satisfy the  property that, for every $k\in [1,N],$ we have
\begin{equation}
\label{T5}
\{y_j\mid j\in [1,k], \  s(j)>k\}
\end{equation}
is the set of homogeneous prime elements  of $R_k$, up to scalar multiplication.
\end{thm}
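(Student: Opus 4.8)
Since the statement is quoted verbatim from \cite[Theorem 4.3]{yak}, I only indicate the shape of its proof. The plan is to induct on the length $N$ of the Ore extension, the base case $R_0=\k$ (equivalently $R_1=\k[x_1]$) being trivial, and at each step to analyse how the homogeneous prime elements change when one passes from $R_{k-1}$ to $R_k=R_{k-1}[x_k;\sigma_k,\delta_k]$. Two preliminary reductions are in order. First, by the Goodearl--Letzter theory of $\mathcal H$-stratification, together with complete primeness, every height-one completely prime ideal of $R_j$ is automatically $\mathcal H$-invariant and is generated by an $\mathcal H$-eigenvector; thus ``homogeneous prime element of $R_j$, up to scalar'' carries the same information as ``height-one $\mathcal H$-prime of $R_j$'', and it suffices to track the latter. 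Second, since $R_k$ is a free left (and right) $R_{k-1}$-module on the powers of $x_k$, every height-one $\mathcal H$-prime $P$ of $R_k$ either contains $x_k$ (which forces $\delta_k=0$, and then $P=x_kR_k$) or contracts to a height-one $\mathcal H$-prime of $R_{k-1}$; so the inductive step reduces to deciding which height-one $\mathcal H$-primes of $R_{k-1}$ survive in $R_k$, and controlling the new one.

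The heart of the argument is this survival analysis, and it rests on one observation: if $p\in R_{k-1}$ is a homogeneous prime element, then $\sigma_k(p)=\lambda p$ for a scalar $\lambda\in\k^*$ (because $p$ is an $\mathcal H$-eigenvector and $\sigma_k$ is the restriction to $R_{k-1}$ of an element of $\mathcal H$, by Definition~\ref{dcgl}(iii)), so $x_kp=\lambda p\,x_k+\delta_k(p)$, and therefore $p$ remains normal in $R_k$ if and only if $\delta_k(p)\in pR_{k-1}$. One then proves: (i) for every $j\in[1,k-1]$ with $s(j)>k$ one has $\delta_k(y_j)\in y_jR_{k-1}$, so $y_j$ stays a homogeneous prime of $R_k$; (ii) when $p(k)\neq-\infty$, writing $j_0=p(k)$ (so $s(j_0)=k$ and $y_{j_0}$ is, by induction, a homogeneous prime of $R_{k-1}$), one has $\delta_k(y_{j_0})\notin y_{j_0}R_{k-1}$, so $y_{j_0}$ is \emph{not} normal in $R_k$, but there is $c_k\in R_{k-1}$ making $y_k:=y_{j_0}x_k-c_k$ a homogeneous normal element generating a completely prime ideal; (iii) when $p(k)=-\infty$ the generator $x_k$ is normal in $R_k$ (which for this index forces $\delta_k=0$, in the normalisation of \cite{yak}), it is a new homogeneous prime, and $y_k:=x_k$. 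A rank count then closes the list: $\rk(R_k)$ equals the number of $i\le k$ with $\delta_i=0$, which equals $|\{j\le k:\ s(j)>k\}|$, so \eqref{T5} is exhausted; and the combinatorics of the predecessor/successor functions shows that in passing from $R_{k-1}$ to $R_k$ exactly $y_{p(k)}$ leaves the list \eqref{T5} and exactly $y_k$ enters it, giving the inductive step.

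The main obstacle is constructing $c_k$ in (ii) and checking that $y_k$ is normal and completely prime. Rewriting the requirement $y_kr\in R_ky_k$ for $r\in R_{k-1}$ and comparing $x_k$-degrees produces a skew-cocycle equation of the form $c_kr-\tau(r)c_k=y_{j_0}\delta_k(r)$ for all $r\in R_{k-1}$, where $\tau=\nu\sigma_k$ with $\nu$ the automorphism of $R_{k-1}$ implementing the normality of $y_{j_0}$; the right-hand side is a $\tau$-skew derivation valued in the ideal $y_{j_0}R_{k-1}$. One integrates this skew derivation to an inner expression by an explicit series in $\delta_k$, which terminates on each element by the local nilpotency hypothesis Definition~\ref{dcgl}(ii) and so defines $c_k\in R_{k-1}$; the relation $\sigma_k\delta_k=q_k\delta_k\sigma_k$ with $q_k$ not a root of unity is exactly what makes the scalar coefficients occurring in that series invertible. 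Homogeneity of $c_k$, hence of $y_k$, is read off from the homogeneity of $y_{j_0}$, $x_k$, and $\delta_k$, and uniqueness of $y_k$ up to a nonzero scalar follows because two solutions of the cocycle equation differ by an element which, by degree reasons, must be central and hence scalar. Finally, complete primeness of $y_kR_k$ is obtained by identifying $R_k/y_kR_k$ with a QNA of strictly smaller rank (equivalently, via the deleting-derivations isomorphism), which also keeps the inductive hypotheses available at the next stage and completes the induction.
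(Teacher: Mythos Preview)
The paper does not prove this theorem; it is quoted verbatim from \cite[Theorem~4.3]{yak} and used as a black box (with the supplementary facts collected in Remark~\ref{T25}). So there is nothing in the paper to compare your argument against beyond the citation itself.

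Your sketch is a reasonable outline of the Goodearl--Yakimov inductive strategy, and your cocycle computation for the normality of $y_k$ is correct. One point where your account diverges from \cite{yak}: you propose to \emph{construct} $c_k$ by solving the cocycle equation via a terminating $\delta_k$-series. In \cite{yak} the logic runs the other way. The existence of a homogeneous prime $y_k\in R_k$ lying over $y_{p(k)}$ is first obtained abstractly from the $\mathcal H$-UFD property of the $R_j$ (this is where \cite{stl} enters), and only afterwards is the explicit formula derived; that formula is the single term $c_k=\alpha_{k,p(k)}^{-1}(q_k-1)^{-1}\delta_k(y_{p(k)})$ recorded in Remark~\ref{T25}(2), not a series. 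Your ``integrate the cocycle'' approach is closer in spirit to Cauchon's deleting-derivations machinery than to the argument actually given in \cite{yak}, and carrying it out rigorously (in particular, checking that the resulting $c_k$ lands in $R_{k-1}$ and that $y_kR_k$ is completely prime) would require essentially redoing the work that the $\mathcal H$-UFD framework packages for free. Also, your claim that uniqueness follows because two solutions differ by a central, hence scalar, element is not quite right in general: $R_{k-1}$ can have a nontrivial center, so uniqueness in \cite{yak} is argued differently (via the leading-term constraint and the $\mathcal H$-weight).
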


We record additional properties of the elements $y_k$ in the following remark.  

\begin{rem}\hfill
\label{T25}
\begin{itemize}
\item[1.] $\delta_k=0$ if and only if $p(k)=-\infty$ (see \cite[Theorem~3.6]{yak1}). 
\item[2.] Assume $p(k) \neq -\infty$. Then it follows from \cite[Proposition 4.7]{yak} that $c_k=\alpha_{k,p(k)}^{-1}(q_k-1)^{-1}\delta_k(y_{p(k)}),$ where $\sigma_k(y_{p(k)})=\alpha_{k,p(k)} y_{p(k)}$ (and $\alpha_{kj}$ is a product of $\lambda_{ki}$, by  \cite[4.15]{yak}). 
\item[3.] Assume $p(k) \neq -\infty$. Then $y_{p(k)}$ is a homogeneous prime element of $R_{k-1}$ as $s(p(k))=k>k-1.$ Hence, $y_{p(k)}R_{k-1}$ is a completely prime ideal of $R_{k-1}.$ 
\item[4.] Assume $p(k) \neq -\infty$. Then  $c_k\not\in y_{p(k)}R_{k-1}$ (see \cite[Theorem 3.6(ii)]{yak}). 

\end{itemize}
\end{rem}

\subsection{Partially localized quantum affine space associated to a QNA}\label{S:rec-QNA-localizations:SSplqas}

From \cite[Theorem 4.6]{yak}, the subalgebra $\mathcal{A}_\textbf{q}$ of $R$ generated by the homogeneous elements $y_1, \ldots, y_N$ is a quantum affine space associated to some multiplicatively skew-symmetric matrix $\textbf{q}:=(q_{ij})\in M_N(\k^*)$. (The entries $q_{ij}$ of $\textbf{q}$ are products of the defining parameters $\lambda_{kl}$, by \cite[4.16]{yak}.)  Thus, 
\begin{equation}
\label{b2}
\mathcal{A}_\q:=\k_{\q}[y_1, \ldots, y_N]
\end{equation}
is a quantum affine space with $y_jy_i=q_{ij}y_iy_j$, for all $i,j\in[1,N]$. We denote by 
\begin{equation}
\label{torus}
\mathcal{T}_\textbf{q}:=\mathcal{A}_\textbf{q}[y_1^{-1}, \ldots, y_N^{-1}]=\k_\textbf{q}[y_1^{\pm 1}, \ldots, y_N^{\pm 1}]
\end{equation}
the quantum torus associated to $\mathcal{A}_\textbf{q}$.

It was proved in  \cite[Theorem 4.6]{yak} that the elements $y_1, \dots, y_N$ form an initial quantum cluster for $R$ in the sense that 
\begin{equation}
\label{T7}
\mathcal{A}_{\textbf{q}}\subseteq R\subseteq \mathcal{T}_\textbf{q}\subseteq \fract(R),
\end{equation} 
where $\fract(R)$ is the skew-field of fractions of $R$. The relationship between $R$ on one hand and $\mathcal{A}_{\textbf{q}}$ (or $\mathcal{T}_{\textbf{q}}$) on the other hand is actually stronger, as we shall see below. 

As we will require localizations of $R$ at multiplicative sets generated by various subsets of $\{y_1, \ldots, y_N\}$, we introduce some notation. Given $I \subseteq [1,N]$, we set $Y_I:=\{y_{k}\mid k\in I\}$ and we denote by $E_I$ the multiplicative system of $R$ generated by $Y_I$. This is an Ore set by \cite[Section 7.1]{yak1}. Moreover, let 
\begin{equation*}
\mathfrak{s}_{<+\infty}:=\{k\in [1, N]\mid s(k)<+\infty\} \quad\text{and}\quad \mathfrak{s}_{+\infty}:=\{k\in [1, N]\mid s(k)=+\infty\}
\end{equation*}
and set $Y_{<+\infty}:=Y_{\mathfrak{s}_{<+\infty}}$, $E_{<+\infty}:=E_{\mathfrak{s}_{<+\infty}}$, $Y_{+\infty}:=Y_{\mathfrak{s}_{+\infty}}$ and $E_{+\infty}:=E_{\mathfrak{s}_{+\infty}}$. We then deduce from \cite[(7.1)]{yak1} that 
\begin{equation}
\label{R-torus}
R[E_{[1,N]}]^{-1}=R[y_1^{-1}, \ldots, y_N^{-1}]=\mathcal{T}_\mathrm{\bf{q}}.
\end{equation}

Since $E_{<+\infty}$ consists of elements which are normal in $\mathcal{A}_{\textbf{q}}$, it constitutes an Ore set in $\mathcal{A}_{\textbf{q}}$, and one can form the partially localized quantum affine space $\mathcal{A}_{\q}E_{<+\infty}^{-1}$. Moreover, we deduce from~\cite[(7.1)]{yak1} that $E_{<+\infty}$ is an Ore set in $R$, and a straightforward induction using~\eqref{5T} shows that $RE_{<+\infty}^{-1}=\mathcal{A}_{\q}E_{<+\infty}^{-1}$. More generally, suppose that $\mathfrak{s}_{<+\infty}\subseteq I\subseteq [1,N]$. Then $E_{<+\infty}\subseteq E_I$ and it is clear that we still have $RE_{I}^{-1}=\mathcal{A}_{\q}E_{I}^{-1}$; so we have the following tower of algebras: 
 
\begin{equation}
\label{T20}
\mathcal{A}_{\q}\subseteq R\subseteq RE_{I}^{-1}=\mathcal{A}_{\q}E_{I}^{-1}\subseteq \mathcal{T}_{\q}\subseteq \fract(R).
\end{equation}

This link between the QNA $R$ and the partially localized quantum affine spaces $\mathcal{A}_{\q}E_{I}^{-1}$, for appropriate choices of $\mathfrak{s}_{<+\infty}\subseteq I\subseteq [1,N]$, will allow us to use the results in Section~\ref{section: derivations partially localized q-affine spaces} on the derivations of partially localized quantum affine spaces to compute derivations of QNAs.

\subsection{Normal elements cannot be multiples of a non-prime $y_i$}

We proceed with a result that proves that $0$ is the only normal element that is a multiple of a non-prime $y_i$ (that is, with $s(i)\neq +\infty$). This result will be used later, namely to describe the action of a derivation of $R$ on the generators $x_i$ when we control its action on the homogeneous elements $y_i$, see Proposition~\ref{centralderivation1}, and in Appendix~\ref{App}. 

\begin{lem}
\label{dd}
For all $ i, j\in [1, N]$ with $i\neq j$, we have that $y_i\not\in y_jR.$ 
\end{lem}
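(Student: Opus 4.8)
The plan is to argue by contradiction, using the recursive structure of the $y_k$ from \eqref{5T} together with the primeness statements recorded in Theorem~\ref{homo} and Remark~\ref{T25}. Suppose $y_i \in y_j R$ with $i \neq j$; I want to derive a contradiction. First I would reduce to a convenient range: pick the largest index among $i,j$, say it is $k := \max(i,j)$, and work inside $R_k$ (or $R_{k-1}$). The key structural fact is \eqref{T5}, which tells us exactly which $y$'s are prime in $R_\ell$: namely $\{y_j : j \in [1,\ell],\ s(j) > \ell\}$.

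The heart of the matter is to separate cases according to whether the ``smaller'' of the two indices gives a prime element at the relevant stage. If $y_j$ is a homogeneous prime element of $R_{k-1}$ (equivalently $s(j) \geq k$), then $y_j R_{k-1}$ is a completely prime ideal, and I would want to show $y_i \notin y_j R_{k-1}$: since $y_i$ is also homogeneous and, by \eqref{T5}, the homogeneous primes of $R_{k-1}$ form a fixed list up to scalars, $y_i \in y_j R_{k-1}$ would force $y_i$ to be a scalar multiple of $y_j$ (as $y_i$ itself is normal and $y_j R_{k-1}$ is generated by a prime), contradicting $i \neq j$ since distinct $y$'s in the list are non-proportional. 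The subtle point is passing between membership in $y_j R$ (the big algebra) and membership in $y_j R_{k-1}$ (a subalgebra); here I would use the Ore-extension filtration: writing $y_i = y_j r$ with $r \in R$ and expanding $r$ in the PBW-type basis of $R$ over $R_k$ in the variables $x_{k+1}, \dots, x_N$, a degree/leading-term argument forces $r \in R_k$ (since $y_i, y_j \in R_k$ and the higher variables act by the automorphisms $\sigma_\ell$ and derivations $\delta_\ell$ in a way that cannot cancel), and then a further degree argument in $x_k$ pushes things down to $R_{k-1}$ when appropriate.

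The genuinely tricky case is when the smaller-indexed element is \emph{not} prime at the stage where the larger one lives — e.g. $s(i) \leq j$, so that $y_i$ has already been ``used up'' as a factor in building $y_{s(i)}$ via $y_{s(i)} = y_i x_{s(i)} - c_{s(i)}$. Here I would exploit Remark~\ref{T25}(4): $c_{s(i)} \notin y_i R_{s(i)-1}$, which is precisely the statement that $y_{s(i)} \notin y_i R_{s(i)-1}$, i.e. the obstruction is built into the construction. More generally, if $y_i$ divided some other $y_j$ in $R$, then chasing the recursion \eqref{5T} upward from $y_i$, every $y_\ell$ with $\mu(\ell) = \mu(i)$ and $\ell \geq s(i)$ would have to be examined; the divisibility would propagate or be obstructed exactly at the step governed by $c_k \notin y_{p(k)}R_{k-1}$. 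I expect this propagation/obstruction bookkeeping — carefully tracking how a hypothetical factorization $y_i = y_j r$ interacts with the defining recursions and with which $y$'s are prime in which $R_\ell$ — to be the main obstacle; the homogeneity ($X(\mathcal{H})$-grading) should help rigidify things, since $y_i$, $y_j$ and hence $r$ would all be forced to be homogeneous, cutting down the possible shapes of $r$ drastically. Once a factorization is shown to live in the right subalgebra $R_{k-1}$ where one of the two elements is prime, the contradiction with the ``up to scalars'' uniqueness in \eqref{T5} closes the argument.
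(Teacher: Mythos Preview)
Your plan overcomplicates matters considerably. The paper's proof is short and splits into two cases based purely on which index is larger, using only PBW degree in a single variable.

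For $i<j$: by the recursion \eqref{5T} we have $y_i\in R_i\subseteq R_{j-1}$, so $\deg_{x_j}(y_i)=0$; but $\deg_{x_j}(y_j)=1$, hence $\deg_{x_j}(y_j u)\geq 1$ for any $u\in R$, and $y_i=y_j u$ is impossible. This is the degree argument you allude to, but there is no need to descend to $R_{k-1}$, invoke primeness, or compare lists of homogeneous primes.

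For $i>j$: the paper does not track the recursion or use $c_k\notin y_{p(k)}R_{k-1}$ at all. Writing $y_i=y_j u$, it cites \cite[Lemma~7.5]{yak1} to conclude $u=y_i v$ for some $v\in R$. Then $y_i=y_j y_i v$; since $y_i$ and $y_j$ quasi-commute, cancelling $y_i$ gives $1=y_j w$ for some $w\in R$, impossible because $\deg_{x_j}(y_j)=1$. (The cited lemma is a prime-type divisibility statement for $y_i$ in $R$ given that the cofactor $y_j$ lies in a smaller $R_\ell$; compare the paper's own Lemma~\ref{b8} in Appendix~\ref{App}.)

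A specific gap in your outline: your claim that ``$y_i\in y_j R_{k-1}$ would force $y_i$ to be a scalar multiple of $y_j$'' is not justified as stated. It would follow if \emph{both} $y_i$ and $y_j$ were prime in $R_{k-1}$ (compare height-one primes), but with $k=\max(i,j)$ the larger-indexed element is not even in $R_{k-1}$, so you first need the descent step you yourself flag as ``subtle'', and you have not supplied it. The entire ``tricky case'' bookkeeping through the $c_k$'s is avoidable.
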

\begin{proof}
Let $w\in R$ and $k\in [1,N].$ Denote the degree in  $x_k$ in the expression of  $w$ in the PBW basis of $R$ by $\deg_{x_k}(w).$  Assume by contradiction that there exist $ i, j\in [1, N]$ with $i\neq j$ and $ y_i\in y_jR$. Thus there exists $u\in R$ such that $y_i=y_ju.$ 
  
Suppose first that $i<j.$ Then,  by construction, we have $\deg_{x_j}(y_i)=0$ and   $\deg_{x_j}(y_ju)\geq 1$, a contradiction.

Next, we suppose that $i>j$.  In this case, it follows from \cite[Lemma 7.5]{yak1} that $u=y_iv$ for some $v\in R$. Then $y_i=y_jy_iv$. Since $y_i$ and $y_j$ quasi-commute, this shows the existence of $w\in R$ such that $1=y_j w$. This is impossible for degree reasons since $\deg_{x_j} (y_j)=1$.
\end{proof}

The set $Y_{+\infty}$ of homogeneous prime elements of $R$
generates a unital subalgebra $\NS(R)$ of $R$, called the \textit{normal subalgebra}, with Gelfand--Kirillov dimension $n$ (see \cite[Theorem 4.6]{yak}). Thus, 
\begin{equation}
\label{T9}
\NS(R):=\k_{\q'}[ y_j \mid j\in \mathfrak{s}_{+ \infty} ]\subseteq \mathcal{A}_\textbf{q},
\end{equation}
where $\textbf{q}'$ is a multiplicatively skew-symmetric sub-matrix of $\textbf{q}$.

We are now ready to establish the following technical result.

\begin{pro}
\label{b5}
Let $R$ be a QNA and  $y_i\in R$ be a homogeneous element with $s(i)<+\infty$. Then $\NS(R) \cap y_iR = \{0\}=\NS(R) \cap Ry_i$.
\end{pro}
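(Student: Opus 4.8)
The plan is to reduce the two-sided statement to the one-sided containment $\NS(R)\cap y_iR=\{0\}$, and then exploit the quantum affine space $\mathcal A_\q$ together with Lemma~\ref{dd}. First I would observe that since $y_i$ is a homogeneous normal element (it is a prime $\mathcal H$-eigenvector of $R_k$ for suitable $k$ only when $s(i)=+\infty$, but even for $s(i)<+\infty$ the element $y_i$ quasi-commutes with every $y_j$ inside $\mathcal A_\q$), one has $y_iR\subseteq \mathcal A_\q E_I^{-1}$ for an appropriate $I$ with $\mathfrak s_{<+\infty}\subseteq I$. More to the point, $y_iR=Ry_i$ would follow if $y_i$ were normal in $R$; in general $y_i$ need not be normal in $R$ when $s(i)<+\infty$, so I should not assume that. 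Instead, the cleaner route is: suppose $0\neq z\in\NS(R)\cap y_iR$, write $z=y_iu$ with $u\in R$, and expand $z$ in the PBW basis.

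The key step is a degree/support argument analogous to Lemma~\ref{dd}. Recall $\NS(R)=\k_{\q'}[y_j\mid j\in\mathfrak s_{+\infty}]\subseteq\mathcal A_\q$, so $z$ is a $\k$-linear combination of monomials $\underline y^{\underline\alpha}$ supported only on indices $j$ with $s(j)=+\infty$; in particular $\deg_{x_i}$ of each such monomial is controlled, and since $s(i)<+\infty$ the index $i$ itself does not appear among those $j$. On the other hand, $z=y_iu$ forces, by the same argument used in Lemma~\ref{dd} (splitting according to whether the relevant $x$-variables have positive degree), that $z$ lies in the ideal $y_iR$; passing to the localization $RE_{[1,N]}^{-1}=\mathcal T_\q$ and writing everything in the PBW/monomial basis of $\mathcal T_\q$, the element $y_i^{-1}z$ must still lie in $\NS(R)\cdot\mathcal T_\q$-style expressions with the $i$-th exponent shifted down by one. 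Comparing the monomial supports — $z$ has all $y_i$-exponents equal to $0$ (as $i\notin\mathfrak s_{+\infty}$), but $y_iu$ has every monomial divisible by $y_i$ on the left — and using that $\{y_1,\dots,y_N\}$ are the canonical generators of the quantum torus $\mathcal T_\q$ (so monomials in them are $\k$-linearly independent), I would reach a contradiction unless $z=0$. This is essentially the mechanism of Lemma~\ref{dd} applied not to a single generator $y_j$ but to an arbitrary element of $\NS(R)$.

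The main obstacle I anticipate is bookkeeping the interaction between the PBW basis of $R$ (in the $x_k$) and the monomial basis of $\mathcal A_\q$ (in the $y_k$): the relation \eqref{5T} expresses $y_k$ in terms of $y_{p(k)}x_k$ minus lower-order correction terms $c_k\in R_{k-1}$, so "$\deg_{x_k}$" does not translate transparently into "$y_k$-exponent". I would handle this exactly as in the proof of Lemma~\ref{dd}: induct on $N$, use that $\deg_{x_j}(y_i)=0$ for $i<j$ and $\deg_{x_j}(y_j)=1$, and invoke \cite[Lemma 7.5]{yak1} to handle the case where indices appearing in $z$ exceed $i$, factoring out $y_i$ on the right and using quasi-commutation to reduce to the impossible relation $1\in y_iR$. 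For the second equality $\NS(R)\cap Ry_i=\{0\}$, I would run the mirror-image argument, or more efficiently note that $\NS(R)$ is closed under the anti-automorphism-type symmetry coming from the fact that both $\NS(R)$ and the property "$s(i)<+\infty$" are left–right symmetric, so the right-handed statement follows from the left-handed one by the analogue of Lemma~\ref{dd} with $y_i\not\in Ry_j$ (which has the same proof).
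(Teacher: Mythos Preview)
Your monomial-support argument has a genuine gap. You claim that in the quantum torus $\mathcal{T}_\q$ the product $y_iu$ must have every monomial divisible by $y_i$, and then compare with $z\in\NS(R)$ whose monomials have $y_i$-exponent~$0$. But this fails precisely because $u\in R$ need \emph{not} have non-negative $y_i$-exponents when expanded in $\mathcal{T}_\q$. Indeed, $R\subseteq \mathcal{A}_\q E_{<+\infty}^{-1}$, and since $s(i)<+\infty$ the element $y_i$ is among those inverted. Concretely, if $k=s(i)$ then \eqref{5T} gives $x_k=y_i^{-1}(y_k+c_k)$, so already the generator $x_k\in R$ carries a negative $y_i$-exponent. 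Hence $y_iu$ can perfectly well have terms with $y_i$-exponent~$0$, and no contradiction arises from the monomial comparison. Your proposed fix---mimicking Lemma~\ref{dd} and invoking \cite[Lemma~7.5]{yak1}---does not close this gap: Lemma~\ref{dd} treats the equation $y_i=y_ju$ between two \emph{single} generators, and the cited lemma there extracts a factor of $y_i$ from $u$ because the left-hand side is $y_i$ itself; for an arbitrary $z\in\NS(R)$ there is no analogous leverage.

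The paper's proof proceeds along an entirely different line that you are missing: the $\mathcal{H}$-UFD structure of $R$ (via \cite[Proposition~3.2]{stl} and \cite[Proposition~2.2]{yak}). After reducing to the case where $u=y_iv$ is a homogeneous normal $\mathcal{H}$-eigenvector, one factors $u=p_1\cdots p_l$ as a product of homogeneous primes. Each $p_m$ is, up to a scalar, some $y_j$ with $s(j)=+\infty$ (Theorem~\ref{homo}). Since $p_1R$ is completely prime and $y_iv\in p_1R$, either $y_i\in y_jR$ (impossible by Lemma~\ref{dd}, as $i\neq j$) or $v\in p_1R$; in the latter case one cancels $p_1$ and repeats, eventually reaching $1\in y_iR$. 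The UFD factorisation is the engine that replaces your failed monomial bookkeeping.
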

\begin{proof}
Suppose that $0\neq u\in \NS(R) \cap y_iR$. Then, there exists 
$v\in R$ such that $u=y_i v.$ Write $u=u_1+\cdots+u_d,$ where the $u_j$ are nonzero $\ch$-eigenvectors with different $\ch$-eigenvalues. It follows from \cite[Proposition 6.20]{myak} 
that each $u_j$ is normal. Similarly, one can also decompose $v\in R$ as 
$v_1+\cdots +v_e$, where the $v_j$ are nonzero $\ch$-eigenvectors with different $\ch$-eigenvalues. Returning to $u=y_i v$, we have that 
$u_1+\cdots+u_d=y_iv_1+\cdots+y_iv_e.$ Each $y_i v_j$  is an $\ch$-eigenvector and they all have different $\ch$-eigenvalues. The uniqueness of the decomposition implies that $d=e$, and there exists a permutation $\tau\in S_d$ such that $u_j=y_iv_{\tau(j)}$ for all $j$, with $u_j$ normal and $v_{\tau(j)}\in R$, both $\ch$-eigenvectors.  

Therefore, we can assume that $u=y_iv$ with $u$ and $v$ both $\ch$-eigenvectors and $u$ normal. From \cite[Proposition 3.2]{stl}, we have that the QNA $R$ is an $\ch$-UFD (unique factorization domain), and so it follows from 
\cite[Proposition 2.2]{yak} that $u$ is either a unit or can be decomposed as 
$u=p_1p_2\ldots p_l$ where $l\geq 1$ and each $p_i$ is a homogeneous prime element (a prime $\ch$-eigenvector). Since the invertible elements of $R$ are reduced to non-zero scalars and $y_iR  \cap  \k^*=\emptyset,$ we conclude that $u$ is not a unit. Hence, $u=p_1p_2\ldots p_l$ where each $p_i$ is a homogeneous prime element. It follows that 
$y_iv=p_1p_2\ldots p_l\in p_1R=Rp_1$. Since, by the definition of a prime element, the ideal $Rp_1$ is completely prime, $y_iv\in Rp_1$ implies that either $y_i\in Rp_1$ or $v\in Rp_1.$ Since $p_1$ is a homogeneous prime element, it follows from Theorem \ref{homo} that there exist $\gamma\in \k^*$ and $j\in [1,N]$ with $s(j)=+\infty$ such that $p_1=\gamma y_j$. Hence, $y_i\in Rp_1$ implies that $y_i\in Ry_j=y_jR$. Given that $s(i)<+\infty$ whereas $s(j)=+\infty,$ we have that $i \neq j$ and $y_i\in Ry_j$, contradicting Lemma \ref{dd}. Therefore,
$v\in p_1R$. This implies that $v=p_1v'$ for some $v'\in R.$ 
So, $p_1\ldots p_l=u=y_iv=y_ip_1v'=p_1\lambda_i y_iv'$ for some $\lambda_i\in \k^*$, as $p_1$ is a homogeneous prime element. Consequently, 
$p_2\ldots p_l=\lambda_i y_i v'.$ Repeating the argument above will eventually lead to $1=y_i w$, with $w\in R,$ a contradiction.

The proof that $\NS(R) \cap Ry_i=\{0\}$ is symmetric.
\end{proof}

\subsection{Intersections of localizations}

Below we have one of the main results in this section.

\begin{thm}
\label{b4}
Let $I,J \subseteq [1,N]$. Then $RE_I^{-1} \cap RE_J^{-1}=RE_{I\cap J}^{-1}$.
\end{thm}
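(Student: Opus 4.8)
The plan is to prove the two inclusions $RE_{I\cap J}^{-1}\subseteq RE_I^{-1}\cap RE_J^{-1}$ and $RE_I^{-1}\cap RE_J^{-1}\subseteq RE_{I\cap J}^{-1}$ separately, working inside the common overring $\fract(R)$ (or more conservatively inside $\mathcal{T}_{\q}=RE_{[1,N]}^{-1}$, which contains all the localizations $RE_K^{-1}$ by~\eqref{R-torus}). The first inclusion is immediate: since $I\cap J\subseteq I$ and $I\cap J\subseteq J$, we have $E_{I\cap J}\subseteq E_I$ and $E_{I\cap J}\subseteq E_J$, so $RE_{I\cap J}^{-1}$ is contained in each of $RE_I^{-1}$ and $RE_J^{-1}$, hence in their intersection. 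The substance of the theorem is the reverse inclusion.

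For the reverse inclusion, I would first reduce to the case where the index sets are closed under taking successors, using the structure of the elements $y_k$. The key observation is the following: when $\mathfrak{s}_{<+\infty}\subseteq K$, \eqref{T20} tells us that $RE_K^{-1}=\mathcal{A}_{\q}E_K^{-1}$ is a (partial localization of a) quantum affine space, where one only needs to invert those $y_j$ with $j\in K$, but actually the effective set of inverted generators is governed by the prime ones. The cleaner route is to reduce to the situation $\mathfrak{s}_{<+\infty}\subseteq I\cap J$: one replaces $I$ by $I\cup\mathfrak{s}_{<+\infty}$ and notes that $RE_{I\cup\mathfrak{s}_{<+\infty}}^{-1}=RE_I^{-1}$ because, by~\eqref{5T} and a straightforward induction, each $y_k$ with $k\in\mathfrak{s}_{<+\infty}$ already becomes invertible once we invert $y_{s(k)}$ (or rather, after inverting enough $y$'s the relation $y_k=y_{p(k)}x_k-c_k$ propagates); more carefully, one uses that $RE_{<+\infty}^{-1}=\mathcal{A}_{\q}E_{<+\infty}^{-1}$ already, so inverting the non-prime $y_i$ costs nothing extra. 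After this reduction, both $RE_I^{-1}$ and $RE_J^{-1}$ are partially localized quantum affine spaces of the form $\mathcal{A}_{\q}E_I^{-1}$ and $\mathcal{A}_{\q}E_J^{-1}$ sitting inside $\mathcal{T}_{\q}=\mathcal{A}_{\q}E_{[1,N]}^{-1}$, and the problem becomes a purely combinatorial statement about monomials: an element $z\in\mathcal{T}_{\q}$, written uniquely as a $\k$-linear combination of Laurent monomials $\underline{y}^{\underline{\alpha}}$ with $\underline{\alpha}\in\Z^N$, lies in $\mathcal{A}_{\q}E_K^{-1}$ if and only if $\alpha_i\geq 0$ for all $i\notin K$ (for all monomials appearing in $z$). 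Granting this characterization, if $z\in\mathcal{A}_{\q}E_I^{-1}\cap\mathcal{A}_{\q}E_J^{-1}$ then every monomial $\underline{y}^{\underline{\alpha}}$ in $z$ has $\alpha_i\geq 0$ for $i\notin I$ and $\alpha_i\geq 0$ for $i\notin J$, hence $\alpha_i\geq 0$ for $i\notin I\cap J$ (since $i\notin I\cap J$ means $i\notin I$ or $i\notin J$), so $z\in\mathcal{A}_{\q}E_{I\cap J}^{-1}=RE_{I\cap J}^{-1}$.

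The main obstacle, and the step I would spend the most care on, is justifying the monomial characterization of $\mathcal{A}_{\q}E_K^{-1}$ inside $\mathcal{T}_{\q}$ when $\mathfrak{s}_{<+\infty}\subseteq K$ is not assumed — i.e.\ handling the general case without the reduction, if the reduction turns out to be delicate. The reduction itself hinges on showing $RE_{I}^{-1}=RE_{I\cup\mathfrak{s}_{<+\infty}}^{-1}$, which I expect to follow from the displayed identity $RE_{<+\infty}^{-1}=\mathcal{A}_{\q}E_{<+\infty}^{-1}$ together with the fact that $E_{<+\infty}$ is generated by normal elements of $\mathcal{A}_{\q}$, so that localizing further at $E_I$ commutes with this; but one should double-check the Ore conditions are compatible (they are, by \cite[(7.1)]{yak1} and \cite[Section 7.1]{yak1}). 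Once inside a quantum affine space $\mathcal{A}_{\q}$ with distinguished normal generators $y_1,\dots,y_N$, the characterization of which elements of the quantum torus lie in a given partial localization is standard: it follows from the PBW-type basis of $\mathcal{A}_{\q}E_K^{-1}$ by the monomials $\underline{y}^{\underline{\alpha}}$ with $\alpha_i\geq 0$ for $i\notin K$ and $\alpha_i\in\Z$ for $i\in K$, and the linear independence of all monomials $\underline{y}^{\underline{\alpha}}$, $\underline{\alpha}\in\Z^N$, in $\mathcal{T}_{\q}$. I would also remark that an alternative, slicker proof of the whole theorem might avoid the reduction entirely by arguing directly in $\mathcal{T}_{\q}$: writing any $z\in RE_I^{-1}$ as $z=r e^{-1}$ with $r\in R$, $e\in E_I$ a product of powers of $y_i$ for $i\in I$, one shows that membership in both $RE_I^{-1}$ and $RE_J^{-1}$ forces the ``denominator'' to be expressible using only $y_i$ with $i\in I\cap J$, using that $R$ is a domain and that the $y_i$ quasi-commute; but the monomial-degree argument above is cleanest and I would present that, relegating the Ore-theoretic bookkeeping to a citation of~\cite{yak1}.
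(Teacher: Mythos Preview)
Your reduction step is wrong. You claim $RE_I^{-1}=RE_{I\cup\mathfrak{s}_{<+\infty}}^{-1}$, but already for $I=\emptyset$ this reads $R=RE_{<+\infty}^{-1}=\mathcal{A}_{\q}E_{<+\infty}^{-1}$, which fails whenever some $\delta_k\neq 0$. The identity $RE_{<+\infty}^{-1}=\mathcal{A}_{\q}E_{<+\infty}^{-1}$ says that \emph{after} inverting all non-prime $y_k$ one lands in a localized quantum affine space; it does not say those $y_k$ were already invertible. Consequently the monomial description of $RE_K^{-1}$ inside $\mathcal{T}_{\q}$ is only available when $\mathfrak{s}_{<+\infty}\subseteq K$ (since only then $RE_K^{-1}=\mathcal{A}_{\q}E_K^{-1}$), and the actual application of Theorem~\ref{b4} in the paper --- the intersection $\bigcap_{k\notin C}RE_{[1,N]\setminus(C\cup\{k\})}^{-1}=R$ in Lemma~\ref{L:inner:x-in-R} --- involves sets that omit indices $k\in\mathfrak{s}_{<+\infty}$. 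So neither the reduction nor the special case suffices.

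Your alternative ``denominator'' sketch is much closer to what works, but it needs a genuine ingredient beyond ``$R$ is a domain and the $y_i$ quasi-commute'': one must know that $y_iR\cap y_jR=y_iy_jR$ for $i\neq j$ (Proposition~\ref{L}, resting on Lemma~\ref{dd}). With this in hand the paper argues as follows. Given $x\in RE_I^{-1}\cap RE_J^{-1}$, choose $\underline{f}\in(\Z_{\geq 0})^I$ lex-minimal with $\underline{y}^{\underline{f}}x\in R$; if $x\notin RE_{I\cap J}^{-1}$ there is $i\in I\setminus J$ with $f_i>0$. Set $x'=\underline{y}^{\underline{f}-\epsilon_i}x$: then $x'\notin R$ by minimality, $y_ix'\in R$, and $x'\in RE_J^{-1}$. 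Pick $\underline{g}\in(\Z_{\geq 0})^J$ minimal with $\underline{y}^{\underline{g}}x'\in R$; some $g_j>0$ with $j\in J$, so $j\neq i$, and then $y_i\underline{y}^{\underline{g}}x'\in y_iR\cap y_jR=y_iy_jR$ forces $\underline{y}^{\underline{g}-\epsilon_j}x'\in R$, contradicting minimality of $\underline{g}$. Quasi-commutation alone does not give $y_iR\cap y_jR=y_iy_jR$; that lemma is the missing piece.
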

\begin{proof}
In case $R$ is a \textit{symmetric} QNA (see \cite[Definition 3.12]{yak1})
this is a consequence of the fact that each nonzero element of $RE^{-1}$ has a unique minimal
denominator. For the general case, the proof is more technical and is included in Appendix~\ref{App}.
\end{proof}

\section{Centers--the zeroth Hochschild cohomology group}
\label{section: centers}

The center of an algebra is its Hochschild cohomology group of degree zero and it constitutes an important invariant subalgebra which acts on its Lie algebra of derivations and on the first Hochschild cohomology group.
In this section, we will be concerned with the centers of the QNA $R$, the quantum affine space $\mathcal{A}_{\mathrm{\bf{q}}}$, the quantum torus $\mathcal{T}_{\mathrm{\bf{q}}}$, and certain localizations of these.

The first observation is that, since
\begin{equation*}
\ZC(R)\subseteq  \NS(R)\subseteq \mathcal{A}_{\mathrm{\bf{q}}}\subseteq R\subseteq \mathcal{T}_{\mathrm{\bf{q}}},
\end{equation*}
where $\NS(R)$ is the normal subalgebra of $R$ introduced in~\eqref{T9}, it follows that 
\begin{equation}\label{E:chain:center:R:Aq:Tq}
\ZC(R)=\ZC(\mathcal{A}_{\q})=\ZC(\mathcal{T}_{\q})\cap \mathcal{A}_{\q}.
\end{equation}
Note also that, from \cite[Proposition 2.11]{yak2}, we have
$\ZC(\mathcal{T}_\q)\subseteq \NS(R)E_{+\infty}^{-1}$, the quantum torus associated with $\NS(R)$.

Note that, since $R$ is a uniparameter QNA, all the parameters $\lambda_{ij}$ and so all the entries of $\q$ are powers of the parameter $q$.  In other words, $\mathcal{T}_{\q}$ is a uniparameter quantum torus. As a consequence, we have that $\ZC(\mathcal{T}_{\q})=\k[z_1^{\pm 1}, \ldots, z_\ell^{\pm 1}]$, for some $0\leq\ell\leq n$, and the $z_i$ can be chosen to be monomials in the $y_j^{\pm 1}$, with $j\in \mathfrak{s}_{+\infty}$. In case $\ZC(\mathcal{T}_{\mathrm{\bf{q}}})=\k$, which is a possibility, the convention is that $\ell=0$.

We are looking for situations in which we are able to conclude, among other properties, that $\ZC(\mathcal{A}_{\mathrm{\bf{q}}})=\k[z_1, \ldots, z_\ell]$.

\begin{exa}
Let $R=\mathcal A_\q$, the quantum affine space associated with the matrix $\q=
\begin{psmallmatrix}
1& q^2 & q^3\\
q^{-2} &1 & q^5\\
q^{-3} &q^{-5} &1 \\ 
\end{psmallmatrix}
$, where $q\in\k^*$ is not a root of unity. Then $R$ is a uniparameter QNA of rank $3$ with $\ZC(\mathcal{T}_{\mathrm{\bf{q}}})=\k[z^{\pm 1}]$, with $z=x_1^5 x_2^{-3} x_3^{2}$, so $\ZC(\mathcal{A}_{\mathrm{\bf{q}}})=\k$.
\end{exa}

In contrast with the example above, with many other QNAs, including $U_q^+(\mathfrak{g})$ with $\mathfrak{g}$ a finite-dimensional complex simple Lie algebra, it is possible to choose the generators $z_i$ of the Laurent polynomial ring $\ZC(\mathcal{T}_{\q})$ so that $\ZC(\mathcal{A}_{\q})=\k[z_1, \ldots, z_\ell]$. 

For $1\leq i\leq\ell$, set 
\begin{equation*}
\supp z_i=\{j\in \mathfrak{s}_{+\infty}\mid \deg_{y_j}z_i\neq 0\},
\end{equation*}
where $\deg_{y_j}$ is computed in the quantum torus of the normal subalgebra $\NS(R)$. 

We want to be able to identify each central generator $z\in\{z_1, \ldots, z_\ell\}$ by a distinguished element $y_c$ with $c\in\supp z$, which we will call a \textit{pivot}. To be precise, we impose the following hypothesis.

\begin{hypo}\label{hyp}
We assume that there is a choice for the monomial generators $z_1, \ldots, z_\ell$ of the Laurent polynomial ring $\ZC(\mathcal{T}_{\mathrm{\bf{q}}})$ and a subset $C=\{c_1, \ldots, c_\ell\}\subseteq \mathfrak{s}_{+\infty}$, with $|C|=\ell$, such that, for all $1\leq i\leq\ell$:
\begin{enumerate}[\ (H1)]
\item $z_i\in\mathcal{A}_{\mathrm{\bf{q}}}$;\label{hyp:1}
\item $\deg_{y_{c_j}}z_i = \delta_{ij}$; \label{hyp:2}
\item if $|\supp z_i|\geq 2$ then $\supp z_i\setminus{\{c_i\}}\not\subseteq\bigcup_{j\neq i}\supp z_j$.\label{hyp:3}
\end{enumerate}
We call the elements in $Y_C=\{y_{c_1}, \ldots, y_{c_\ell}\}$ \textit{pivots}.
\end{hypo}

\begin{rem}\label{R:rem:on:hyp}\hfill
\begin{enumerate}
\item (H\ref{hyp:2}) above implies that $\supp z_i \cap C=\{c_i\}$.
\item Assuming (H\ref{hyp:2}), it is easy to see that $y_{c_i}\in\ZC(R)\iff\supp z_i=\{c_i\}$. Thus, (H\ref{hyp:3}) could be replaced with the equivalent formulation: 
\begin{enumerate}
\item[\textit{(H'3)}] \textit{if $y_{c_i}$ is not central, then there is $k\in\supp z_i$ such that $k\neq c_i$ and $k\notin \supp z_j$, for any $j\neq i$}.
\end{enumerate}
\item In case all normal elements of $R$ are central, i.e.\ $\NS(R)=\ZC(R)$, then $\ell=n=\rk(R)$ and we can take $\{z_1, \ldots, z_n\}=Y_{+\infty}$ and $C=\mathfrak{s}_{+\infty}$. We see that Hypothesis~\ref{hyp} holds in this case. This covers the QNAs of the form $U_q^+(\mathfrak{g})$, with $\mathfrak{g}$ of type 
$A_1$, $B_n~(n\geq 2)$, $C_n~(n\geq 3)$, $D_n~(n\geq 4 ~\text{even})$, $ G_2$, $F_4$, $E_7$ and $E_8$.
\item More generally, if the $\supp z_i$, with $1\leq i\leq \ell$, are pairwise disjoint and, up to a nonzero scalar factor, $z_i=\prod_{k\in\supp z_i}y_k$, then we can choose any $c_i\in\supp z_i$. We see that Hypothesis~\ref{hyp} holds in this case, which covers all QNAs of the form $U_q^+(\mathfrak{g})$, with $\mathfrak{g}$ simple of any finite type.
\end{enumerate}
\end{rem}

Assume that Hypothesis~\ref{hyp} holds. Let $E:=E_{[1,N]\setminus C}$, the Ore set in $R$ generated by all the $y_i$ that are not pivots. Set
\begin{equation*}
\mathcal{T}_{\widehat\q}:=\k_{\widehat\q}[y_i^{\pm 1}\mid i\in [1,N]\setminus C],
\end{equation*}
the quantum torus of rank $N-\ell$ generated by the non-pivots, where $\widehat\q$ is an appropriate submatrix of $\q$. Finally, set $\widehat{R}=RE^{-1}$.

\begin{pro}\label{P:ZCs}
Assume that Hypothesis~\ref{hyp} holds. Then we have the following:
\begin{enumerate}[(a)]
\item $\widehat{R}=\mathcal{A}_{\q} E^{-1}=\mathcal{T}_{\widehat\q}[z_1, \ldots, z_\ell]$;\label{P:ZCs:1}
\item $\mathcal{T_\q}=\mathcal{T}_{\widehat\q}[z_1^{\pm1}, \ldots, z_\ell^{\pm1}]$;\label{P:ZCs:2}
\item $\ZC(\mathcal{T}_{\widehat\q})=\k$;\label{P:ZCs:3}
\item $\ZC(B)=\k[z_1, \ldots, z_\ell]$, for any subalgebra $B$ such that $\mathcal{A}_\q\subseteq B\subseteq\widehat R$.\label{P:ZCs:4} 
\end{enumerate}
In particular, $\ZC(R)=\ZC(\widehat R)=\k[z_1, \ldots, z_\ell]$ and $N-\ell$ is even.
\end{pro}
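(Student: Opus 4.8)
The plan is to establish the four parts in order, since each feeds into the next, and then harvest the ``in particular'' statement at the end.

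\medskip

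\noindent\textbf{Part (a).} The identity $\widehat R=\mathcal A_\q E^{-1}$ follows from~\eqref{T20}: since $C\subseteq\mathfrak s_{+\infty}$, the complement $[1,N]\setminus C$ contains $\mathfrak s_{<+\infty}$, so the tower~\eqref{T20} with $I=[1,N]\setminus C$ gives $RE^{-1}=\mathcal A_\q E^{-1}$. For the second equality, I would argue that $\mathcal A_\q E^{-1}$ is generated over $\k$ by the $y_i^{\pm1}$ with $i\notin C$ together with the $y_{c_j}$ with $c_j\in C$. By (H\ref{hyp:1}) and (H\ref{hyp:2}), each $z_i$ lies in $\mathcal A_\q$ and, when expanded as a Laurent monomial in the $y_j$ ($j\in\mathfrak s_{+\infty}$), involves $y_{c_i}$ to the first power and no other pivot; hence modulo the subalgebra $\mathcal T_{\widehat\q}$ generated by the non-pivots, $z_i$ and $y_{c_i}$ differ by a unit of $\mathcal T_{\widehat\q}$, so $\mathcal T_{\widehat\q}[z_1,\dots,z_\ell]=\mathcal T_{\widehat\q}[y_{c_1},\dots,y_{c_\ell}]$. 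Finally one checks this last algebra equals $\mathcal A_\q E^{-1}$: it clearly sits inside, and conversely every $y_i$ with $i\in[1,N]\setminus C$ is invertible in it while the remaining generators $y_{c_j}$ are present. One should also note the $z_i$ are algebraically independent over $\mathcal T_{\widehat\q}$ (e.g.\ because the $y_{c_i}$ are), so the ``$[z_1,\dots,z_\ell]$'' really is a polynomial ring.

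\medskip

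\noindent\textbf{Part (b).} This is part (a) with the pivots inverted: $\mathcal T_\q=RE_{[1,N]}^{-1}$ by~\eqref{R-torus}, which is $\widehat R$ with the $y_{c_j}$ (equivalently the $z_j$, by the unit comparison above) inverted, giving $\mathcal T_{\widehat\q}[z_1^{\pm1},\dots,z_\ell^{\pm1}]$.

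\medskip

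\noindent\textbf{Part (c).} Since $\mathcal T_\q$ is a uniparameter quantum torus, $\ZC(\mathcal T_\q)=\k[z_1^{\pm1},\dots,z_\ell^{\pm1}]$ was recorded before Hypothesis~\ref{hyp} (with the $z_i$ monomials in the $y_j$, $j\in\mathfrak s_{+\infty}$), and by part (b) together with Theorem~\ref{T:generic} applied to $A=\mathcal T_{\widehat\q}$, $\ZC(\mathcal T_\q)=\ZC(\mathcal T_{\widehat\q})[z_1^{\pm1},\dots,z_\ell^{\pm1}]$. Comparing the two descriptions, which are both Laurent polynomial rings in the algebraically independent elements $z_1,\dots,z_\ell$, forces $\ZC(\mathcal T_{\widehat\q})=\k$. (Degree/multidegree bookkeeping in the $z_i$ makes this precise.)

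\medskip

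\noindent\textbf{Part (d) and the conclusion.} Given $\mathcal A_\q\subseteq B\subseteq\widehat R$, I would apply Theorem~\ref{T:generic} to the presentation $\widehat R=\mathcal T_{\widehat\q}[z_1,\dots,z_\ell]$ from part (a): it yields $\ZC(\widehat R)=\ZC(\mathcal T_{\widehat\q})[z_1,\dots,z_\ell]=\k[z_1,\dots,z_\ell]$ by part (c). Since $z_1,\dots,z_\ell\in\mathcal A_\q\subseteq B$, we get $\k[z_1,\dots,z_\ell]\subseteq\ZC(B)$; conversely $\ZC(B)\subseteq\ZC(\widehat R)\cap B$ is not immediate (central in $B$ need not mean central in $\widehat R$), so instead I would note $\ZC(B)\subseteq\ZC(\fract(R))\cap\mathcal T_\q$, and the latter equals $\ZC(\mathcal T_\q)\cap\widehat R=\k[z_1^{\pm1},\dots,z_\ell^{\pm1}]\cap\mathcal T_{\widehat\q}[z_1,\dots,z_\ell]=\k[z_1,\dots,z_\ell]$ using parts (a) and (b). Taking $B=\mathcal A_\q$ recovers $\ZC(R)=\ZC(\mathcal A_\q)=\k[z_1,\dots,z_\ell]$ via~\eqref{E:chain:center:R:Aq:Tq}, and $B=\widehat R$ gives $\ZC(\widehat R)=\k[z_1,\dots,z_\ell]$. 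Finally, $N-\ell$ is even because $\mathcal T_{\widehat\q}$ is a quantum torus of rank $N-\ell$ with trivial center, and a uniparameter quantum torus is centerless only if its rank is even (this is where the connection to a simple quantum torus and the antisymmetric exponent matrix having full even rank is used). I expect the main obstacle to be Part (c)/the center comparison in Part (d): one must be careful that ``central in $B$'' is controlled, and that the two Laurent-polynomial descriptions of $\ZC(\mathcal T_\q)$ genuinely have the same variables up to the evident change of generators, which is exactly what (H\ref{hyp:1})--(H\ref{hyp:3}) are engineered to guarantee.
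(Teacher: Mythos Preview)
Your proposal is correct and follows essentially the same route as the paper for parts (a)--(c). The one point worth flagging is in part (d): your worry that ``$\ZC(B)\subseteq\ZC(\widehat R)$ is not immediate'' is misplaced, and the detour through $\fract(R)$ is unnecessary. Since $\mathcal A_\q\subseteq B$ and $\widehat R=\mathcal A_\q E^{-1}$ with $E\subseteq\mathcal A_\q$, any element central in $B$ already commutes with all of $\mathcal A_\q$, hence with every element of the localization $\widehat R$; this gives $\ZC(B)\subseteq\ZC(\widehat R)$ directly. The same localization observation gives $\ZC(\mathcal A_\q)\subseteq\ZC(B)$, so the paper simply sandwiches $\ZC(\mathcal A_\q)\subseteq\ZC(B)\subseteq\ZC(\widehat R)$ and is done once both ends are shown to equal $\k[z_1,\dots,z_\ell]$. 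Your argument via $\fract(R)$ works (it is really the same idea pushed further, since $\fract(\mathcal A_\q)=\fract(R)$), but it is longer than needed. A small technical note: in part (c) you invoke Theorem~\ref{T:generic}, which is stated for polynomial extensions; the center computation for the Laurent extension $\mathcal T_{\widehat\q}[z_1^{\pm1},\dots,z_\ell^{\pm1}]$ follows by the identical coefficient-comparison argument, as the paper does implicitly.
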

\begin{proof}
For~(\ref{P:ZCs:1}) above, recall that we have observed at the end of Subsection~\ref{S:rec-QNA-localizations:SSplqas} that $RE_{<\infty}^{-1}=\mathcal{A}_{\q}E_{<\infty}^{-1}$. Since $E_{<\infty}\subseteq E$, it follows that $\widehat{R}=RE^{-1}=\mathcal{A}_\q E^{-1}=\mathcal{T}_{\widehat\q}[y_c\mid c\in C]$.

It's clear that the elements $z_1, \ldots, z_\ell$ are algebraically independent over $\mathcal{T}_{\widehat\q}$, because the set of variables $Y_C$ is algebraically independent over $\mathcal{T}_{\widehat\q}$ and $\supp z_i \cap C=\{c_i\}$. So $\mathcal{T}_{\widehat\q}[z_1, \ldots, z_\ell]$ is a (commutative) polynomial extension of $\mathcal{T}_{\widehat\q}$ and $\mathcal{T}_{\widehat\q}[z_1, \ldots, z_\ell]\subseteq \mathcal{T}_{\widehat\q}[y_c\mid c\in C]$.

Conversely, given $1\leq i\leq \ell$, Hypothesis~\ref{hyp} implies that, up to a nonzero scalar factor, $z_i=y_{c_i}v_i$, where $v_i$ is a monomial in the $y_k$ with $k\notin C$. So $v_i^{\pm1}\in \mathcal{T}_{\widehat\q}$ and $y_{c_i}=z_iv_i^{-1}\in \mathcal{T}_{\widehat\q}[z_1, \ldots, z_\ell]$, establishing the other inclusion.

Now~(\ref{P:ZCs:2}) follows from~(\ref{P:ZCs:1}), as 
\begin{equation*}
\mathcal{T_\q}= \mathcal{A}_{\q} E^{-1}E_C^{-1}=\mathcal{T}_{\widehat\q}[z_1, \ldots, z_\ell]E_C^{-1}\subseteq \mathcal{T}_{\widehat\q}[z_1^{\pm1}, \ldots, z_\ell^{\pm1}],
\end{equation*}
where the last inclusion follows from the relation $y^{-1}_{c_i}=v_iz_i^{-1}$, for some $v_i \in \mathcal{T}_{\widehat\q}$. The inclusion $\mathcal{T}_{\widehat\q}[z_1^{\pm1}, \ldots, z_\ell^{\pm1}]\subseteq \mathcal{T_\q}$ is evident.

To show~(\ref{P:ZCs:3}) note that, by~(\ref{P:ZCs:2}), 
\begin{equation*}
\k[z_1^{\pm1}, \ldots, z_\ell^{\pm1}]=\ZC(\mathcal{T_\q})=\ZC(\mathcal{T}_{\widehat\q})[z_1^{\pm1}, \ldots, z_\ell^{\pm1}].
\end{equation*}
So indeed it must be that $\ZC(\mathcal{T}_{\widehat\q})=\k$. It is well known that the center of an odd rank uniparameter quantum torus  is non-trivial, see for instance \cite[Proposition 2.3]{Richard2002}. Whence, the triviality of the center of $\mathcal{T}_{\widehat\q}$ forces the rank of $\mathcal{T}_{\widehat\q}$ to be even. So $|[1,N]\setminus C|=N-\ell$ is even.

It remains to prove~(\ref{P:ZCs:4}). By~(\ref{P:ZCs:1}) and~(\ref{P:ZCs:3}), $\ZC(\widehat R)=\k[z_1, \ldots, z_\ell]$. As $\widehat{R}=\mathcal{A}_{\q} E^{-1}$, we have 
\begin{equation*}
\ZC(\mathcal{A}_{\q}) = \ZC(\widehat R)\cap \mathcal{A}_{\q}=\k[z_1, \ldots, z_\ell]\cap \mathcal{A}_{\q}=\k[z_1, \ldots, z_\ell].
\end{equation*}
If $\mathcal{A}_\q\subseteq B\subseteq\widehat R$ is a subalgebra, then we deduce from $\widehat{R}=\mathcal{A}_{\q} E^{-1}$ that $\ZC(\mathcal{A}_{\q})\subseteq\ZC(B)\subseteq\ZC(\widehat R)$, yielding $\ZC(B)=\k[z_1, \ldots, z_\ell]$.
\end{proof}


\section{The first Hochschild cohomology group of a QNA}
\label{section: derivation QNA}

This is the main section of the paper and it focuses on investigating the first Hochschild cohomology group of a QNA $R$ satisfying the following two conditions: 
\begin{enumerate}[(i)]
\item $R$ is a uniparameter QNA with parameter $q$ as in Definition \ref{dcgl};
\item Hypothesis~\ref{hyp} holds.
\end{enumerate}
Throughout this section, unless otherwise stated, we assume these two hypotheses are satisfied. We will provide interesting examples of such QNAs in the final section of this paper. 

We will show that each derivation of $R$ decomposes (uniquely) as a sum of an inner derivation and a homogeneous derivation (see Subsection~\ref{SS:homogeneous}). We begin by tackling the inner part of a derivation of $R$.

\subsection{The inner component of a derivation of $R$}

To study the space $\der(R)$ of $\k$-derivations of $R$, notice that we can uniquely extend any derivation $D$ of $R$ to a derivation of $\widehat{R}=RE^{-1}$, via localization. This makes it clear that, using the same notation $D$ for this extension, we have $D(\widehat{R})\subseteq \widehat{R}$. In fact, we can identify $\der(R)$ with $\{D\in \der(\widehat R)\mid D(R)\subseteq R\}$.

From Proposition~\ref{P:ZCs} we have that $\mathcal{T}_{\widehat\q}=\k_{\widehat\q}[y_i^{\pm 1}\mid i\in [1,N]\setminus C]$ is a simple quantum torus and $\widehat{R}=\mathcal{T}_{\widehat\q}[z_1, \ldots, z_\ell]$. Thus, Corollary~\ref{prop: partially-localized-q-affine-spaces} shows that, as a derivation of $\widehat{R}$, we can decompose $D$ (uniquely) as
\begin{equation}\label{E:D-as-ad-plus-central}
D=\ad_x+\theta, 
\end{equation}
for some $x\in \widehat{R}$ so that, for all $i\in [1,N]\setminus C$, $\theta(y_i)= \omega_i y_i$, for some $\omega_i \in \ZC(\widehat{R})=\ZC(R)=\k[z_1, \ldots, z_\ell]$. At the end of this subsection, we will also be able to describe the action of $\theta$ on the pivot variables $y_c$, with $c\in C$.

Our first goal, however, is to show that $x\in R$ and, for that purpose, we need to introduce intermediate subalgebras between $R$ and $\widehat R$.

Recall that $E=E_{[1,N]\setminus C}$. For each $k\in [1,N]\setminus C$, let 
$F_k:=E_{[1,N]\setminus (C \cup\{k\})}$ be the Ore set in $R$ generated by $\{y_i\mid i\notin C \cup\{k\}\}$ (see \cite[(7.1)]{yak1}), and 
\begin{equation*}
B_k:=RF_k^{-1} 
\end{equation*}
be the corresponding localization.

Since $E$ satisfies the Ore condition over $R$ and $E$ is generated by $F_k$ and $y_k$, and $y_k$ quasi-commutes with the generators of $F_k$, it follows that $y_k$ generates a multiplicative system that satisfies the Ore condition in $B_k$. Moreover, we have the following chain of embeddings: 
\begin{equation}
\label{T23}
 R\subseteq B_k\subseteq \widehat{R}=B_k[y_k^{-1}]=RE^{-1}\subseteq \mathcal{T}_{\textbf{q}}.
\end{equation}
We know already that $\ZC(R)=\ZC(B_k)=\ZC(\widehat{R})$, by Proposition~\ref{P:ZCs}, so we have complete control over the centers of all algebras appearing in~\eqref{T23}.

For each $k\in [1,N]\setminus C$, let $\mathcal{Q}_{k}=\k_{{\widehat\q}_k}[y_i^{\pm 1}\mid  i\notin C \cup\{k\}]$. So $\mathcal{Q}_{k}$ is a quantum torus, where ${\widehat\q}_k$ is the multiplicatively skew-symmetric matrix obtained from $\q$ by deleting its rows and columns indexed by $C \cup\{k\}$. Moreover, $\mathcal{Q}_{k}\subseteq B_k$ and 
\begin{equation}\label{E:Tqh:over:Qk}
\mathcal{T}_{\widehat{\q}}=\bigoplus_{j\in\mathbb{Z}}\mathcal{Q}_{k} y_k^j. 
\end{equation}

The rank of $\mathcal{Q}_{k}$ is $N-\ell-1$, which is odd, by Proposition~\ref{P:ZCs}. Thus, as the center of an odd rank uniparameter quantum torus is non-trivial (see \cite[Proposition 2.3]{Richard2002}) and central elements in a quantum torus are sums of central (Laurent) monomials in the generators of the quantum torus, we have the following result.
 
\begin{lem}
\label{a17}
For each $k\in [1,N]\setminus C$, there exists a non-trivial monomial $\prod_{i\in [1,N]\setminus (C \cup\{k\})} y_i^{m_i}$ (with at least one integer $m_i \neq 0$) in the center of the  quantum torus 
$\mathcal{Q}_{k}=\k_{{\widehat\q}_k}[y_i^{\pm 1}\mid  i\notin C \cup\{k\}]$.
\end{lem}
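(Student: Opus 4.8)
The statement is essentially a structural fact about uniparameter quantum tori, so the plan is to reduce it to the known result \cite[Proposition 2.3]{Richard2002} and to the observation, already quoted in the paragraph preceding the lemma, that central elements of a quantum torus are sums of central monomials. First I would observe that $\mathcal{Q}_{k}$ is a uniparameter quantum torus: by construction ${\widehat\q}_k$ is the submatrix of $\q$ obtained by deleting the rows and columns indexed by $C\cup\{k\}$, and since $R$ is a uniparameter QNA, every entry of $\q$ — hence every entry of ${\widehat\q}_k$ — is a power of the single parameter $q$, which is not a root of unity. Next I would invoke Proposition~\ref{P:ZCs}, which gives that $N-\ell$ is even; since $\mathcal{Q}_{k}$ has rank $|[1,N]\setminus(C\cup\{k\})| = N-\ell-1$, its rank is odd.

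The second ingredient is \cite[Proposition 2.3]{Richard2002}, which guarantees that an odd-rank uniparameter quantum torus over a field (with $q$ not a root of unity) has nontrivial center, i.e.\ $\ZC(\mathcal{Q}_{k})\supsetneq\k$. Pick any central element $c\in\ZC(\mathcal{Q}_{k})\setminus\k$. It is standard that in a quantum torus the center is spanned by central (Laurent) monomials in the canonical generators: writing $c=\sum_{\underline{m}}\lambda_{\underline{m}}\,\underline{y}^{\underline{m}}$ in the monomial basis, one checks that $y_j c = c\, y_j$ for all generators $y_j$ forces each monomial $\underline{y}^{\underline{m}}$ with $\lambda_{\underline{m}}\neq 0$ to be itself central (since distinct monomials are $\mathcal{H}$-eigenvectors, or simply by comparing coefficients after conjugation by $y_j$). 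Since $c\notin\k$, at least one such monomial $\prod_{i\in[1,N]\setminus(C\cup\{k\})} y_i^{m_i}$ has some exponent $m_i\neq 0$, and this monomial lies in $\ZC(\mathcal{Q}_{k})$, which is exactly the claim.

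There is no real obstacle here; the lemma is a direct corollary of results already in place, and the only point requiring a line of care is the passage from "the center is nontrivial'' to "there is a nontrivial \emph{monomial} in the center,'' which uses the monomial-basis description of central elements of a quantum torus. If one wanted to be fully self-contained on that point, the short argument is: for a generator $y_j$ one has $y_j\,\underline{y}^{\underline{m}} = q^{\langle e_j,\underline{m}\rangle}\,\underline{y}^{\underline{m}}\,y_j$ for the relevant skew pairing, so $y_j c = c y_j$ together with linear independence of the monomials forces $q^{\langle e_j,\underline{m}\rangle}=1$ whenever $\lambda_{\underline{m}}\neq 0$; as $q$ is not a root of unity this means $\langle e_j,\underline{m}\rangle=0$ for every $j$, i.e.\ each such $\underline{y}^{\underline{m}}$ is central.
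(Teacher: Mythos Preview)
Your proof is correct and follows exactly the same approach as the paper: the lemma is presented there as an immediate consequence of the preceding paragraph, which notes that $\mathcal{Q}_k$ is a uniparameter quantum torus of odd rank $N-\ell-1$ (by Proposition~\ref{P:ZCs}), invokes \cite[Proposition 2.3]{Richard2002} to conclude the center is nontrivial, and uses that central elements are sums of central monomials. Your write-up is slightly more detailed on the last point, but the argument is identical.
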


Since $B_k$ is a localization of $R$ contained in $\widehat{R}$, we can further think of $D$ as a derivation of $\widehat{R}$ such that $D(R)\subseteq R$ and $D(B_k)\subseteq B_k$.

\begin{lem}\label{L:inner:x-in-R}
Let $x\in \widehat{R}$ be as in~\eqref{E:D-as-ad-plus-central}. Then $x\in R$.
\end{lem}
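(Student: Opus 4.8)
The plan is to reduce the statement to a purely local claim for each index and then to glue via Theorem~\ref{b4}. Concretely, I would first prove that $x\in B_k$ for every $k\in[1,N]\setminus C$, and then observe that since $B_k=RF_k^{-1}=RE_{[1,N]\setminus(C\cup\{k\})}^{-1}$, an iterated application of Theorem~\ref{b4} gives $\bigcap_{k\in[1,N]\setminus C}B_k=RE_I^{-1}$ with $I=\bigcap_{k\in[1,N]\setminus C}\bigl([1,N]\setminus(C\cup\{k\})\bigr)$. An index $j$ belongs to $I$ iff $j\notin C$ and $j\neq k$ for every $k\in[1,N]\setminus C$, which is impossible, so $I=\emptyset$ and $\bigcap_kB_k=RE_\emptyset^{-1}=R$. (If $C=[1,N]$ then $E=\{1\}$, $\widehat R=R$ and there is nothing to prove, so we may assume $[1,N]\setminus C\neq\emptyset$.) Hence $x\in\bigcap_kB_k=R$.

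So the real content is: fix $k\in[1,N]\setminus C$ and show $x\in B_k$. Set $S_k:=\mathcal{Q}_k[z_1,\dots,z_\ell]$. The key structural input is the $\Z$-grading $\widehat R=\bigoplus_{j\in\Z}S_k\,y_k^{\,j}$: it is a vector-space direct sum by~\eqref{E:Tqh:over:Qk} and Proposition~\ref{P:ZCs}(\ref{P:ZCs:1}), and it is multiplicative because $y_k$ quasi-commutes with each generator of $\mathcal{Q}_k$ and commutes with the central $z_i$, so $y_kS_k=S_ky_k$. Let $w=\prod_{i\in[1,N]\setminus(C\cup\{k\})}y_i^{m_i}$ be the nontrivial central monomial of $\mathcal{Q}_k$ provided by Lemma~\ref{a17}. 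Being a Laurent monomial in the $y_i$, $w$ quasi-commutes with every $y_i$ $(i\in[1,N])$ and with every $z_j$, hence is normal in $\widehat R$; thus $\phi\colon a\mapsto waw^{-1}$ is a $\k$-algebra automorphism of $\widehat R$ that fixes $S_k$ pointwise and satisfies $y_kw=\xi_kwy_k$ for some $\xi_k\in\k^*$, so $\phi$ acts on $S_k\,y_k^{\,j}$ as multiplication by $\xi_k^{-j}$. Here $\xi_k\neq1$: otherwise $w$ would commute with all generators $y_i$ $(i\notin C)$ of $\mathcal{T}_{\widehat\q}$ and so lie in $\ZC(\mathcal{T}_{\widehat\q})=\k$ by Proposition~\ref{P:ZCs}(\ref{P:ZCs:3}), contradicting nontriviality; and since $R$ is uniparameter, $\xi_k=q^{c}$ with $c\in\Z\setminus\{0\}$, hence $\xi_k$ is not a root of unity and the scalars $\xi_k^{-j}$, $j\in\Z$, are pairwise distinct. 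As $w^{\pm1}\in\mathcal{Q}_k\subseteq B_k$, the subalgebra $B_k$ is $\phi$-stable, and since $\phi$ is diagonalizable on $\widehat R$ with the $S_k\,y_k^{\,j}$ as eigenspaces for distinct eigenvalues, it follows that $B_k=\bigoplus_j\bigl(B_k\cap S_k\,y_k^{\,j}\bigr)$ is in fact a \emph{graded} subalgebra of $\widehat R$.

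To conclude, decompose $x=\sum_jx_j$ with $x_j\in S_k\,y_k^{\,j}$. From~\eqref{E:D-as-ad-plus-central} we get $\ad_x(w)=D(w)-\theta(w)$; now $D(w)\in B_k$ because $w\in B_k$ and $D(B_k)\subseteq B_k$, while $\theta(w)\in\ZC(R)\,w\subseteq B_k$ because $\theta(y_i)=\omega_iy_i$ with $\omega_i\in\ZC(R)$ for $i\notin C$ (so $\theta(w)=(\sum_im_i\omega_i)w$). Hence $xw-wx\in B_k$; writing $xw-wx=(x-\phi(x))w$ and using that $w$ is a unit of $B_k$ gives $x-\phi(x)=\sum_j(1-\xi_k^{-j})x_j\in B_k$. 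The $j=0$ summand vanishes, while $1-\xi_k^{-j}\in\k^*$ for $j\neq0$, so gradedness of $B_k$ forces $x_j\in B_k$ for all $j\neq0$; and $x_0\in S_k=\mathcal{Q}_k[z_1,\dots,z_\ell]\subseteq B_k$ outright. Therefore $x=\sum_jx_j\in B_k$. I expect the main obstacle to be precisely this last step, and within it the delicate point of upgrading $B_k$ from merely $\phi$-stable to a graded subalgebra of $\widehat R$ — which is exactly where the uniparameter hypothesis is used, guaranteeing that $\xi_k$ is not a root of unity so that the $y_k$-grading coincides with the eigenspace decomposition of $\phi$; the remainder is routine bookkeeping with the localizations $B_k$ and Theorem~\ref{b4}.
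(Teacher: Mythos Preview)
Your proof is correct and follows essentially the same route as the paper's: for each $k\in[1,N]\setminus C$ you use the central monomial of $\mathcal{Q}_k$ from Lemma~\ref{a17}, the fact that its commutation scalar with $y_k$ is a nontrivial power of $q$ (hence not a root of unity), a Vandermonde-type separation of the $y_k$-graded components, and then Theorem~\ref{b4} to intersect down to $R$. The only difference is packaging: where the paper writes out the Vandermonde system explicitly for the finitely many negative components of $x$, you instead note that $B_k$ is stable under conjugation by the central monomial and hence inherits the $y_k$-grading (the implicit Vandermonde argument now applied once and for all to an arbitrary element of $B_k$), which is a slightly cleaner way to reach the same conclusion.
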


\begin{proof}
For each $k\in [1,N]\setminus C$,
let $C_k=\mathcal{Q}_{k}[z_1, \ldots, z_\ell]$. Then $C_k$ is a subalgebra of $B_k$ and, by \eqref{E:Tqh:over:Qk},
\begin{equation}\label{E:Rhat:over:Ck}
\widehat{R}=\bigoplus_{j\in\mathbb{Z}}C_k y_k^j. 
\end{equation}
As a result, $x\in \widehat{R}$ can be written uniquely as
$$x=\sum_{j\in \Z}a_{(k,j)}y_{k}^j,$$ 
where $a_{(k,j)}\in C_{k}.$
Decompose $x=x_+ + x_-,$ where
$$x_-=\sum_{j<0}a_{(k,j)}y_{k}^j \quad \text{and} \quad 
x_+=\sum_{j\geq 0}a_{(k,j)}y_{k}^j.$$
Clearly, $x_+\in B_k$.   We now proceed to show that $x_-\in B_k$, for each $k$, by using a strategy already used in the proof of \cite[Proposition 2.3]{ak} (see also the proof of \cite[Lemma 5.9]{lo}). Since $C_k$ is generated by the quantum torus $\mathcal{Q}_{k}$  and the central variables $z_i$, with $1\leq i\leq\ell$, we deduce from Lemma~\ref{a17} that there exists a non-trivial monomial in the generators of $\mathcal{Q}_k$, denoted by $u_k$, that is central in $C_{k}$. Note that $u_k$ does not belong to $\ZC(\widehat{R})$ because $\mathcal{Q}_{k}\cap\ZC(\widehat{R})\subseteq \mathcal{T}_{\widehat{\q}}\cap \k[z_1, \ldots, z_\ell]=\k$, by~(H\ref{hyp:2}). Since the monomial $u_k$ is central in $C_k$ and not in $\widehat{R}$, then~\eqref{E:Rhat:over:Ck} forces $u_{k}y_{k}\neq y_{k}u_{k}$ and so $y_{k}u_{k}=\xi u_{k}y_{k}$, for some $\xi:=\xi_{k}\in\k\setminus\{0,1\}$. In fact, $\xi$ is not a root of unity, as $\xi^\ell=1$ for some $\ell\in\Z$ implies that $y_{k}u_{k}^\ell=u_{k}^\ell y_{k}$, so $u_{k}^\ell\in\mathcal{Q}_k\cap \ZC(\widehat{R})=\k$, forcing $\ell=0$.

Since  $\theta(y_{j})=\omega_{j}y_{j}$
for each $j\in [1,N]\setminus C$, with $\omega_{j}\in \ZC(R)=\ZC(B_k)$,  we have that 
$\theta(u_{k})=\eta_{k} u_{k}$, for some $\eta_{k}\in \ZC(B_k)$. 
Note that $u_{k}^{\pm 1}\in \mathcal{Q}_k\subseteq  B_{k}$.  Since $D$ restricts to a derivation of $B_k$, we have that
\[D(u_{k}^i)=\ad_{x}(u_k^i)+\theta(u_{k}^i)
=\ad_{x_-}(u_{k}^i)+\ad_{x_+}(u_{k}^i)+
i\eta_{k} u_{k}^i\in B_{k},\] 
for all $i\in \mathbb Z$.
Observe that $\ad_{x_+}(u_{k}^i)+
i\eta_{k} u_{k}^i\in B_{k}$. Hence, $\ad_{x_-}(u_{k}^i)\in B_{k}$.
It follows that
\[\ad_{x_-}(u_{k}^i)=\sum_{j=-1}^{-m}(1-\xi^{-ij})a_{(k,j)}y_{k}^ju_{k}^i\in B_{k},\] 
for some $m\in \Z_{>0}$.
Hence, 
\[\chi_i:=\ad_{x_-}(u_{k}^i)u_{k}^{-i}=\sum_{j=-1}^{-m}(1-\xi^{-ij})a_{(k,j)}y_{k}^j\in B_{k},\]
for all $i\in\Z$.

We have the following matrix equation:
\[
\begin{bmatrix}
(1-\xi)&(1-\xi^{2})&\cdots &(1-\xi^{m})\\
(1-\xi^{2})&(1-\xi^{4})&\cdots &(1-\xi^{2m})\\
(1-\xi^{3})&(1-\xi^{6})&\cdots &(1-\xi^{3m})\\
\vdots&\vdots&\ddots&\vdots\\
(1-\xi^{m})&(1-\xi^{2m})&\cdots &(1-\xi^{m^2})\\
\end{bmatrix}
\begin{bmatrix}
a_{(k,-1)}y_{k}^{-1}\\
a_{(k,-2)}y_{k}^{-2}\\
\vdots\\
a_{(k,-m+1)}y_{k}^{-m+1}\\
a_{(k,-m)}y_{k}^{-m}\\
\end{bmatrix}=
\begin{bmatrix}
\chi_1\\
\chi_2\\
\chi_3\\
\vdots\\
\chi_m\\
\end{bmatrix}.
\]
We already know that each $\chi_i$ is an elment of $B_k$. 
We now show that, for each $k\in [1,N]\setminus C$, the elements $a_{(k,j)}y_{k}^j$ also belong to $B_k$, for all $j<0$.
It is sufficient to do this by showing that the coefficient matrix 
\[
U:=\begin{bmatrix}
(1-\xi)&(1-\xi^{2})&\cdots &(1-\xi^{m})\\
(1-\xi^{2})&(1-\xi^{4})&\cdots &(1-\xi^{2m})\\
(1-\xi^{3})&(1-\xi^{6})&\cdots &(1-\xi^{3m})\\
\vdots&\vdots&\ddots&\vdots\\
(1-\xi^{m})&(1-\xi^{2m})&\cdots &(1-\xi^{m^2})\\
\end{bmatrix}
\]
is invertible. 
Apply row operations: $ -r_{m-1}+r_m\rightarrow r_m,\ldots, -r_2+r_3\rightarrow r_3, -r_1+r_2\rightarrow r_2$ to $U$ to obtain: 
$$
U'=\begin{bmatrix}
l_1&l_2&l_3&\cdots &l_m\\
\xi l_1&\xi^{2}l_2&\xi^{3}l_3&\cdots &\xi^{m}l_m\\
\xi^{2}l_1&\xi^{4}l_2&\xi^{6}l_3&\cdots &\xi^{2 m}l_m\\
\vdots&\vdots&\vdots&\ddots&\vdots\\
\xi^{m-1}l_1&\xi^{2(m-1)}l_2&\xi^{3(m-1)}l_3&\cdots &\xi^{m(m-1)}l_m\\
\end{bmatrix},
$$
where $l_i:=1-\xi^{i}$, for $i\in [1,m]$. Since $\xi$ is not a root of unity and $\xi\neq 0$, it follows that $U'$ is similar to a  Vandermonde matrix whose parameters are pairwise distinct. Hence, $U'$ is invertible. Consequently, $U$ is also invertible. 
Therefore, each $a_{(k,j)}y_{k}^j$ is a linear combination of the $\chi_i\in B_k$. Hence, for each $k\in [1,N]\setminus C$, we have that $a_{(k,j)}y_{k}^j\in B_k$, for all $j<0$. 

We can therefore conclude that 
 $x_{-}=\sum_{j=-1}^{-m}a_{(k,j)}y_{k}^j\in B_k,$ and so $x=x_++x_-\in B_k$.  
Consequently,  
\[x\in \bigcap_{k\in [1,N]\setminus C}B_k= \bigcap_{k\in [1,N]\setminus C}R{E^{-1}_{[1,N]\setminus (C \cup\{k\})}}= R,\] 
the last equality following from Theorem \ref{b4}. 
\end{proof}

\begin{cor}
\label{cor-inner}
$\ad_x$ and $\theta=D-\ad_x$ are derivations of $R$.
\end{cor}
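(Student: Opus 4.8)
The plan is to leverage Lemma~\ref{L:inner:x-in-R} together with the identification of $\der(R)$ with the derivations of $\widehat R$ that preserve $R$. Recall that, from~\eqref{E:D-as-ad-plus-central}, we have decomposed the given derivation $D\in\der(R)$ — viewed inside $\der(\widehat R)$ via localization — as $D=\ad_x+\theta$, where $x\in\widehat R$ and $\theta$ acts diagonally (with central eigenvalues) on the non-pivot variables $y_i$. By Lemma~\ref{L:inner:x-in-R} we now know that in fact $x\in R$.

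First I would observe that, since $x\in R$, the map $\ad_x\colon a\mapsto xa-ax$ clearly sends $R$ into $R$, hence $\ad_x$ restricts to a ($\k$-linear) derivation of $R$; it is by definition an inner derivation of $R$. Then, since both $D$ and $\ad_x$ are derivations of $\widehat R$ that preserve $R$, their difference $\theta=D-\ad_x$ is a derivation of $\widehat R$ preserving $R$, so $\theta$ restricts to a derivation of $R$ as well. Under the identification of $\der(R)$ with $\{D'\in\der(\widehat R)\mid D'(R)\subseteq R\}$ established at the start of Subsection~\ref{SS:homogeneous}, this is exactly the assertion that $\ad_x,\theta\in\der(R)$.

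This is essentially immediate once Lemma~\ref{L:inner:x-in-R} is in hand, so there is no real obstacle; the content has already been carried out in the lemma. The only point worth spelling out is that the extension-to-$\widehat R$ and restriction-to-$R$ operations are mutually inverse on the relevant subspaces of derivations, which is the standard fact that a derivation of a ring extends uniquely to any localization and that this extension is compatible with the inclusion of subrings. I would present this as a one-line proof: $x\in R$ by Lemma~\ref{L:inner:x-in-R}, so $\ad_x\in\innder(R)\subseteq\der(R)$, and therefore $\theta=D-\ad_x\in\der(R)$ since $D\in\der(R)$.

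\begin{proof}
By Lemma~\ref{L:inner:x-in-R}, the element $x$ appearing in the decomposition~\eqref{E:D-as-ad-plus-central} lies in $R$. Hence $\ad_x$ is an inner derivation of $R$, in particular $\ad_x\in\der(R)$. Since $D\in\der(R)$ as well, we conclude that $\theta=D-\ad_x$ is a derivation of $R$. (Equivalently, under the identification of $\der(R)$ with $\{D'\in\der(\widehat R)\mid D'(R)\subseteq R\}$, both $\ad_x$ and $\theta$ preserve $R$, and so restrict to derivations of $R$.)
\end{proof}
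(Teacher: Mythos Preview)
Your proof is correct and matches the paper's intended argument: the corollary is stated without proof precisely because it is immediate from Lemma~\ref{L:inner:x-in-R}, exactly as you explain. One minor quibble: the identification of $\der(R)$ with $\{D'\in\der(\widehat R)\mid D'(R)\subseteq R\}$ is established at the start of the subsection on the inner component (just before~\eqref{E:D-as-ad-plus-central}), not at the start of Subsection~\ref{SS:homogeneous}.
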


Now we can describe the action of $\theta$ on all $y_i$, with $i\in [1, N]$. Note that, if $y_i\in\ZC(R)$, then $\theta(y_i)\in\ZC(R)$, as $\theta\in\der (R)$ and derivations take central elements to central elements.

\begin{cor}\label{C:theta:action:c}
Let $i\in [1, N]$. If $y_i\notin\ZC(R)$ then $\theta(y_i)\in\ZC(R) y_i$.
\end{cor}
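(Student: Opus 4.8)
The plan is to reduce the statement about the pivot variables $y_c$, with $c\in C$, to what we already know about the non-pivot variables together with the explicit form $z_i = y_{c_i} v_i$ (up to scalar) from Hypothesis~\ref{hyp}, where $v_i$ is a monomial in the $y_k$ with $k\notin C$. By Corollary~\ref{cor-inner}, $\theta=D-\ad_x$ is a derivation of $R$, and since $\theta$ extends to $\widehat R$ with $\theta(y_i)=\omega_i y_i$ for every $i\in[1,N]\setminus C$ and $\omega_i\in\ZC(R)$, the first step is to compute $\theta$ on each $v_i$: because $v_i$ is a (Laurent) monomial in the $y_k$ with $k\notin C$, the Leibniz rule gives $\theta(v_i)=\mu_i v_i$ for some $\mu_i\in\ZC(R)$ (a $\Z$-linear combination of the $\omega_k$). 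The second step is to use that $z_i\in\ZC(R)$, so $\theta(z_i)\in\ZC(R)$; writing $z_i=\gamma_i y_{c_i} v_i$ and applying $\theta$ yields $\theta(z_i)=\gamma_i\theta(y_{c_i})v_i+\gamma_i y_{c_i}\mu_i v_i$, and hence $\theta(y_{c_i})v_i = \gamma_i^{-1}\theta(z_i) - y_{c_i}\mu_i v_i$. Since $v_i$ is invertible in $\widehat R$ and $\gamma_i^{-1}\theta(z_i)\in\ZC(R)$, this gives $\theta(y_{c_i}) = \big(\gamma_i^{-1}\theta(z_i)\big)v_i^{-1} - \mu_i y_{c_i}$, so $\theta(y_{c_i})\in\ZC(R)v_i^{-1}+\ZC(R)y_{c_i}$.

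To finish, I need to show that the "bad" term $\big(\gamma_i^{-1}\theta(z_i)\big)v_i^{-1}$ actually lies in $\ZC(R)y_{c_i}$; equivalently, that $\theta(z_i)\in \ZC(R)\,y_{c_i}v_i = \ZC(R)\,z_i$. Here is where Hypothesis~\ref{hyp}, specifically (H\ref{hyp:3}) (or its reformulation (H'3)), enters: if $y_{c_i}$ is not central then by Remark~\ref{R:rem:on:hyp}(2) we have $\supp z_i\neq\{c_i\}$, and (H'3) furnishes an index $k\in\supp z_i$ with $k\neq c_i$ and $k\notin\supp z_j$ for $j\neq i$. Since $\theta(z_i)\in\ZC(R)=\k[z_1,\dots,z_\ell]$, write $\theta(z_i)=P(z_1,\dots,z_\ell)$ and compare $\deg_{y_k}$ on both sides using the $Y_{+\infty}$-grading coming from the quantum torus $\NS(R)E_{+\infty}^{-1}$: on the left, $\theta(z_i)=\big(\text{scalar}\cdot\theta(z_i)\big)$ is homogeneous of the same $X(\mathcal H)$-degree as $z_i$ (because $\theta$ preserves the $X(\mathcal H)$-grading, being a sum of the grading-preserving pieces $\omega_i y_i$ and $\theta(y_{c_i})$ which we may assume homogeneous after decomposing $\theta$ into graded components), forcing $\deg_{y_k}\theta(z_i)=\deg_{y_k}z_i\neq 0$; on the right, since $k\notin\supp z_j$ for $j\neq i$, only monomials in $P$ divisible by $z_i$ contribute to $\deg_{y_k}$, and a short argument on the highest power of $z_i$ appearing shows $P$ is a $\k$-multiple of $z_i$ plus terms not involving $z_i$, the latter contributing $0$ to $\deg_{y_k}$; hence $\theta(z_i)\in\k z_i\subseteq\ZC(R)z_i$, as needed.

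The main obstacle I anticipate is making precise the grading/degree bookkeeping in the last paragraph: one must be careful that $\theta$, while a derivation of $R$, genuinely respects the $X(\mathcal H)$-grading (which follows by decomposing an arbitrary derivation into its graded components and treating each separately, using that $R$ is $X(\mathcal H)$-graded with finite-dimensional pieces), and that the $\deg_{y_k}$ appearing in Hypothesis~\ref{hyp} — computed in the quantum torus of $\NS(R)$ — is compatible with the degree function on $\ZC(R)=\k[z_1,\dots,z_\ell]\subseteq\NS(R)E_{+\infty}^{-1}$. Once these compatibilities are pinned down, the comparison of $\deg_{y_k}$ on the two sides of $\theta(z_i)=P(z_1,\dots,z_\ell)$ is routine. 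The remaining case $y_{c_i}\in\ZC(R)$ is immediate from the remark preceding the statement, so the argument above covers all $i\in[1,N]$.
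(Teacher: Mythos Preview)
Your reduction to showing $\theta(z_i)\in\ZC(R)z_i$ (equivalently, $\theta(y_{c_i})v_i\in\ZC(R)z_i$) is exactly the right target and matches the paper. The gap is in the argument you give for that step. The $\deg_{y_k}$ bookkeeping you propose does not go through, for two linked reasons. First, $\deg_{y_k}$ is only defined on $\mathcal{A}_\q$ (or $\NS(R)$, or $\mathcal{T}_\q$), not on all of $R$; a priori $\theta(y_{c_i})$ is just an element of $R$, so the expression $\deg_{y_k}\theta(z_i)$ computed ``from the left'' via $\theta(z_i)=\gamma_i\theta(y_{c_i})v_i+\mu_i z_i$ has no meaning until you know $\theta(y_{c_i})$ lies in $\NS(R)$. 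Second, your grading fix does not repair this: even after decomposing $D$ (and hence $\theta$) into $X(\mathcal H)$-homogeneous pieces, the implication ``same $X(\mathcal H)$-weight $\Rightarrow$ same $\deg_{y_k}$'' is false in general, since distinct monomials in $\NS(R)$ can share a weight. Your sentence ``$\theta$ preserves the grading, being a sum of the grading-preserving pieces $\omega_i y_i$ and $\theta(y_{c_i})$ which we may assume homogeneous'' is also circular: it presupposes control over $\theta(y_{c_i})$, which is precisely what is in question.

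The paper closes this gap with an intermediate claim you omit: it shows $\theta(y_c)\in\NS(R)$. The argument is that $\theta(y_c)v\in\ZC(R)$ forces $\theta(y_c)\in\NS(R)E_{+\infty}^{-1}$, while $\theta(y_c)\in R\subseteq\mathcal A_\q E_{<+\infty}^{-1}$, and the intersection of these two partially localized quantum affine spaces inside $\mathcal T_\q$ is exactly $\NS(R)$. Once $\theta(y_c)\in\NS(R)$, divisibility by $y_k$ becomes meaningful: $v$ is divisible by $y_k$ in $\NS(R)$, hence so is $\theta(y_c)v$, and then the part of $\theta(y_c)v\in\ZC(R)$ not divisible by $z_i$ lies in $\k[z_1,\dots,\widehat{z_i},\dots,z_\ell]\cap y_k\NS(R)=\{0\}$ by (H\ref{hyp:3}). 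That claim $\theta(y_c)\in\NS(R)$ is the missing idea; with it, the rest of your outline is essentially correct.
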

\begin{proof}
In case $i\notin C$ the desired conclusion has already been established in \eqref{E:D-as-ad-plus-central}. So let $c\in C$ and assume that $y_c$ is not central. By (H\ref{hyp:1}) and (H\ref{hyp:2}), there is a unique $1\leq i\leq \ell$ such that, up to a nonzero scalar factor, $z_i=y_c v$, where $v$ is a monomial in the $y_k$, with $k\in (\supp z_i)\setminus C\subseteq \mathfrak{s}_{+\infty}\setminus C$.

\underline{Claim:} $\theta(y_c)\in\NS(R)$. 

Since $z_i\in\ZC(R)$, we have
\begin{equation*}
\ZC(R)\ni \theta(z_i)= \theta(y_c) v + y_c \theta(v)=\theta(y_c) v +  y_c\eta v= \theta(y_c) v + \eta z_i,
\end{equation*}
for some $\eta\in\ZC(R)$, because we already know that $\theta(y_k)\in\ZC(R) y_k$, for all $k\notin C$. It follows that $\theta(y_c) v\in\ZC(R)\subseteq \NS(R)$. Thus, as $\theta(R)\subseteq R\subseteq RE_{<+\infty}^{-1}=\mathcal{A}_{\q}E_{<+\infty}^{-1}$, we get
\begin{align*}
\theta(y_c)\in & \NS(R)E_{+\infty}^{-1}\cap R \subseteq \NS(R)E_{+\infty}^{-1}\cap\mathcal{A}_{\q}E_{<+\infty}^{-1} \\
&\quad =\k_{\q'}[ y_k^{\pm 1} \mid  k \in \mathfrak{s}_{+\infty}]\cap \k_\q [y_k\mid k \in \mathfrak{s}_{+\infty}][y_j^{\pm 1}\mid j \in \mathfrak{s}_{<+\infty}]\\
&\quad =\NS(R).
\end{align*}

The claim is proved; now write $\vartheta=\theta(y_c) v\in\ZC(R)$. We can decompose $\vartheta=\vartheta_0+\vartheta_1 z_i$, with $\vartheta_1\in\ZC(R)$ and $\vartheta_0\in\k[z_1, \ldots, \widehat{z_i}, \ldots, z_\ell]$. So
\begin{equation*}
\vartheta_0= \theta(y_c) v-\vartheta_1 z_i= (\theta(y_c) -\vartheta_1 y_c) v\in\NS(R)v\cap\k[z_1, \ldots, \widehat{z_i}, \ldots, z_\ell].
\end{equation*}
Now, by (H\ref{hyp:3}) (see also the ensuing Remark~\ref{R:rem:on:hyp}), there is $k\in\supp z_i$ such that $k\neq c$ and $k\notin \supp z_j$, for any $j\neq i$. So, working in $\NS(R)$, $y_k$ divides $v$, and thus any element in $\NS(R)v$; in contrast, we have that $\k[z_1, \ldots, \widehat{z_i}, \ldots, z_\ell]\subseteq\k_{\q'_k}[y_j\mid s(j)=+\infty,\ j\neq k]$, hence the only element in $\k[z_1, \ldots, \widehat{z_i}, \ldots, z_\ell]$ divisible by $y_k$ is $0$. It follows that $\NS(R)v\cap\k[z_1, \ldots, \widehat{z_i}, \ldots, z_\ell]=\{0\}$, so $\vartheta_0=0$ and $\theta(y_c) =\vartheta_1 y_c\in \ZC(R) y_c$.
\end{proof}

Our next task is to study how the derivation $\theta$ acts on the generators $x_1, \ldots, x_N$ of $R$. We refer to $\theta$ as a {\em homogeneous derivation}. The terminology will be justified in the following subsection.

\subsection{Homogeneous derivations}\label{SS:homogeneous}

In this subsection, we study the derivation $\theta$ of $R$ appearing in the decomposition~\eqref{E:D-as-ad-plus-central}. By Corollary~\ref{C:theta:action:c}, its action on the homogeneous elements $y_i$, with $i\in [1,N]$, is of the form 
\[
\theta(y_k)=\begin{cases}
\omega_k y_k & \text{if $y_k\notin\ZC(R)$,}\\
\omega_k & \text{if $y_k\in\ZC(R)$,}
\end{cases}
\]
for some $\omega_1, \ldots, \omega_N\in \ZC(R)$. Our aim is to describe how $\theta$ acts on the generators $x_1, \ldots, x_N$ of $R$.

Recall that the QNA $R=R_k[x_{k+1}; \sigma_{k+1}, \delta_{k+1}]\cdots [x_N; \sigma_N, \delta_N]$ is equipped with the action of the maximal torus $\mathcal{H}=(\k^*)^n$ by $\k$-automorphisms, where $n$ is the rank of $R$ and the character group $X(\mathcal{H})$ is isomorphic to $Q:=\Z^n$
(see \cite[Chap. II.2]{bg}). For all homogeneous $x\in R$, there exists a unique {\em weight} $\underline\alpha =(\alpha_1, \ldots, \alpha_n)\in Q$ such that
$$\underline h\cdot x=h_1^{\alpha_1}\ldots h_n^{\alpha_n}x,$$
for all 
$\underline h=(h_1, \ldots, h_n)\in \mathcal{H}.$ 
Denote the weight of $x$ by $\wt(x)=\underline \alpha\in Q.$ Set $$\underline{\beta_i}:=\wt(x_i) \ \text{and} \ \underline{\gamma_i}:=\wt(y_i)$$ for all $i\in [1, N],$ and let $Q_k:=\langle \underline{\beta_1}, \ldots, \underline{\beta_k}\rangle$ be the subgroup of $Q$ generated by $\underline{\beta_1}, \ldots, \underline{\beta_k}$. The subgroup 
$Q_k$ is a free abelian group of rank at most $n$. 

Suppose that $\delta_k\neq 0$. Then, there exists $a\in R_{k-1}$ homogeneous such that 
$$x_ka=\sigma_k(a)x_k+\delta_k(a),$$
with $0\neq\delta_k(a)\in R_{k-1}$ and $\sigma_k(a)\in \k^*a.$ In particular, $\wt(a), \wt(\delta_k(a))\in Q_{k-1}$. Since $x_ka=\sigma_k(a)x_k+\delta_k(a)$ is homogeneous,  we deduce that $\delta_k(a)\in R_{k-1}$ is homogeneous with 
$$\wt(\delta_k(a))=\underline{\beta_k}+\wt(a)\in Q_{k-1}.$$ Thus, 
$$\underline{\beta_k}=\wt(\delta_k(a))-\wt(a)\in Q_{k-1}.$$ It follows that 
$$Q_k=Q_{k-1}.$$

Since we are assuming that $\mathcal{H}$ is maximal, that is, $n=|\{k\mid \delta_k=0\}|$, it follows that, if $\delta_k= 0$, then $Q_k$ has rank equal to one more than the rank of $Q_{k-1}$. The above discussion together with \cite[Theorem 5.5]{yak} shows the following.

\begin{lem}\label{b9}
\begin{enumerate}[(a)]
\item If $\delta_k\neq 0,$ then $Q_k=Q_{k-1}$.
\item If $\delta_k=0,$ then $Q_k=Q_{k-1}\oplus \Z\underline{\beta_k}$.
\end{enumerate}
\end{lem}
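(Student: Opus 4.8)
The plan is to prove Lemma~\ref{b9} by analyzing the rank of the subgroups $Q_k = \langle \underline{\beta_1}, \ldots, \underline{\beta_k}\rangle$ as $k$ increases from $0$ to $N$, distinguishing the two cases according to whether $\delta_k = 0$ or not, exactly as foreshadowed in the discussion preceding the statement.

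For part~(a), I would simply record the computation already carried out in the excerpt: when $\delta_k \neq 0$, local nilpotency of $\delta_k$ on $R_{k-1}$ (condition (ii) of Definition~\ref{dcgl}) together with the fact that $\sigma_k$ scales homogeneous eigenvectors produces a homogeneous $a \in R_{k-1}$ with $0 \neq \delta_k(a) \in R_{k-1}$ and $\sigma_k(a) \in \k^* a$; comparing weights in the homogeneous relation $x_k a = \sigma_k(a) x_k + \delta_k(a)$ gives $\underline{\beta_k} = \wt(\delta_k(a)) - \wt(a) \in Q_{k-1}$, hence $Q_k = Q_{k-1}$. So part~(a) is essentially already done in the text and just needs to be stated as the conclusion.

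For part~(b), the claim is that when $\delta_k = 0$ the sum $Q_{k-1} + \Z\underline{\beta_k}$ is direct. The content here is that $\underline{\beta_k}$ is \emph{not} torsion modulo $Q_{k-1}$, i.e.\ no nonzero multiple of $\underline{\beta_k}$ lies in $Q_{k-1}$. The plan is a rank count: by part~(a), whenever $\delta_j \neq 0$ the rank of $Q_j$ equals that of $Q_{j-1}$, while whenever $\delta_j = 0$ the rank can go up by at most one. Since $|\{j \mid \delta_j = 0\}| = n$ (maximality of $\mathcal H$, as assumed after Definition~\ref{dcgl}) and $Q_N = Q \cong \Z^n$ has rank exactly $n$ (this is where \cite[Theorem 5.5]{yak} enters, giving that the $\underline{\beta_j}$ with $\delta_j = 0$ generate $Q$), every step with $\delta_j = 0$ must in fact increase the rank by exactly one — there is no slack. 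Hence when $\delta_k = 0$ we have $\rk(Q_k) = \rk(Q_{k-1}) + 1$; since $Q_k = Q_{k-1} + \Z\underline{\beta_k}$ always, a rank increase forces $\underline{\beta_k}$ to be non-torsion over $Q_{k-1}$, and as $Q_k$ is free abelian this yields $Q_k = Q_{k-1} \oplus \Z\underline{\beta_k}$.

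The main (minor) obstacle is making the rank bookkeeping airtight: one must verify that a step with $\delta_j = 0$ raises the rank by \emph{at most} one (immediate, since one generator is added) and then argue by the pigeonhole/telescoping equality $\rk(Q_N) = \sum_{j:\,\delta_j = 0}(\rk Q_j - \rk Q_{j-1}) = n$ that each such increment is exactly one. The only genuine input from outside is the identification $Q_N = Q \cong \Z^n$ with the $\{\underline{\beta_j} : \delta_j = 0\}$ spanning it, which is precisely \cite[Theorem 5.5]{yak}; everything else is elementary free-abelian-group arithmetic. I would present part~(a) first (as an immediate consequence of the preceding paragraph), then part~(b) via the rank argument.
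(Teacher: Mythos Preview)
Your proposal is correct and follows essentially the same approach as the paper's own discussion preceding the lemma. Part~(a) is the weight computation already carried out in the text, and part~(b) is precisely the rank-counting argument the paper sketches (with \cite[Theorem~5.5]{yak} supplying that $Q_N$ has full rank $n$); you have simply made the pigeonhole step and the passage from ``rank increases by one'' to ``the sum is direct'' explicit.
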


We are now ready to describe the action of the homogeneous derivation $\theta$ on the generators $x_i$ (and on all homogeneous elements).

\begin{pro}
\label{centralderivation1}
Let $\theta$ be a $\k$-derivation of $R$. Assume that: 
\begin{enumerate}[(i)]
\item None of the generators $x_i$, with $i\in [1,N]$, is central in $R$. 
\item There are $\omega_1, \ldots, \omega_N\in \ZC(R)$ such that $
\theta(y_k)=\begin{cases}
\omega_k y_k & \text{if $y_k\notin\ZC(R)$;}\\
\omega_k & \text{if $y_k\in\ZC(R)$.}
\end{cases}
$
\end{enumerate}
Then, there exists an abelian group homomorphism $\eta:Q\rightarrow \ZC(R)$
such that, for every homogeneous element $a\in R$, 
\[\theta(a)=\eta(\wt(a))a.\]
\end{pro}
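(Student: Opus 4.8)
The plan is to build the homomorphism $\eta\colon Q\to\ZC(R)$ on the generators $\underline{\beta_i}=\wt(x_i)$ of $Q$ and then verify it has the required property on all homogeneous elements by induction along the Ore extension tower $R_0\subseteq R_1\subseteq\cdots\subseteq R_N=R$. The starting point is hypothesis~(ii), which pins down $\theta$ on the $y_k$; the bridge to the $x_k$ is the recursion~\eqref{5T}: $y_k=y_{p(k)}x_k-c_k$ when $p(k)\neq-\infty$, and $y_k=x_k$ when $p(k)=-\infty$, i.e.\ when $\delta_k=0$. I would treat these two cases separately.

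First, the base/easy case: if $\delta_k=0$ then $x_k=y_k$, and by Lemma~\ref{b9}(b) the class $\underline{\beta_k}$ is a free direct summand of $Q$ independent of $Q_{k-1}$, so I am free to \emph{define} $\eta(\underline{\beta_k})=\omega_k$ (recording, as a running invariant, that $\theta(x_k)=\omega_kx_k$ when $x_k$ is not central, and $\theta(x_k)=\omega_k\in\ZC(R)$ in the central case — though hypothesis~(i) rules the latter out for the $x_k$, it can occur transiently for other homogeneous elements). Since the $\underline{\beta_k}$ with $\delta_k=0$ form a $\Z$-basis of $Q$ by Lemma~\ref{b9}, this does define a group homomorphism $\eta\colon Q\to\ZC(R)$ unambiguously; the content is to show $\theta(a)=\eta(\wt(a))a$ for \emph{every} homogeneous $a$, in particular for the remaining $x_k$ with $\delta_k\neq0$ and for the $y_k$, matching hypothesis~(ii).

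The heart of the argument is the inductive step for $\delta_k\neq0$. Here $p(k)\neq-\infty$, so $y_{p(k)}$ is a homogeneous prime of $R_{k-1}$ (Remark~\ref{T25}(3)) and $x_k=y_{p(k)}^{-1}(y_k+c_k)$ inside $\fract(R)$. Applying $\theta$ and using $\theta(y_k)=\omega_ky_k$ (or $\omega_k$ if central) together with the inductive hypothesis applied to the homogeneous element $c_k\in R_{k-1}$ — note $\wt(c_k)=\wt(y_k)-\underline{\beta_k}+\underline{\beta_k}$… more precisely $c_k$ is homogeneous of weight $\underline{\gamma_k}-\underline{\beta_k}\in Q_{k-1}$, so $\theta(c_k)=\eta(\wt(c_k))c_k$ is already known — I would solve for $\theta(x_k)$ and check it equals $\eta(\underline{\beta_k})x_k$ where $\eta(\underline{\beta_k}):=\wt(x_k)$ expressed via Lemma~\ref{b9}(a) as an element of $Q_{k-1}$, so $\eta(\underline{\beta_k})$ is already determined. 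The delicate point, and what I expect to be the main obstacle, is the compatibility/consistency check: because $\delta_k\neq0$, $\underline{\beta_k}$ is \emph{not} free over $Q_{k-1}$, so $\eta(\underline{\beta_k})$ is forced by the earlier data, and one must verify that the value of $\theta(x_k)$ computed from the relation $y_k=y_{p(k)}x_k-c_k$ is consistent with this — i.e.\ the scalars $\omega_k,\omega_{p(k)}$ and $\eta(\wt(c_k))$ must satisfy a linear relation. To establish it I would exploit that $\delta_k$ is a nonzero $\sigma_k$-derivation and pick (as in the text preceding Lemma~\ref{b9}) a homogeneous $a\in R_{k-1}$ with $0\neq\delta_k(a)\in R_{k-1}$ and $\sigma_k(a)\in\k^*a$; applying $\theta$ to $x_ka=\sigma_k(a)x_k+\delta_k(a)$ and comparing $x_k$-degrees (using the PBW basis, as in the proof of Lemma~\ref{dd}) yields exactly the needed identity among the $\omega$'s and forces $\theta(x_k)=\eta(\underline{\beta_k})x_k$; here Proposition~\ref{b5} / Lemma~\ref{dd} guarantee we are not dividing by zero or creating spurious central terms, and hypothesis~(i) ensures $x_k$ contributes a genuine $x_k$-multiple rather than a central constant. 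Once $\theta(x_i)=\eta(\underline{\beta_i})x_i$ is known for all $i$, the Leibniz rule gives $\theta(a)=\eta(\wt(a))a$ for every monomial in the PBW basis that is homogeneous, hence for every homogeneous $a$, and the additivity $\eta(\wt(ab))=\eta(\wt(a))+\eta(\wt(b))$ is automatic from $\wt(ab)=\wt(a)+\wt(b)$ and the Leibniz rule, confirming $\eta$ is a group homomorphism $Q\to\ZC(R)$ as claimed.
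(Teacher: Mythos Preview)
Your overall inductive scaffolding---build $\eta$ along the tower $R_0\subseteq\cdots\subseteq R_N$, use Lemma~\ref{b9} to distinguish the free case $\delta_k=0$ from the constrained case $\delta_k\neq 0$, and in the latter exploit the recursion $y_k=y_{p(k)}x_k-c_k$ together with hypothesis~(ii)---matches the paper exactly. The gap is in your proposed ``consistency check''.

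Concretely: applying $\theta$ to the recursion (with $i=p(k)$, case $y_k\notin\ZC(R)$) gives
\[
y_i\bigl(\omega_k x_k-\eta(\underline{\gamma_i})x_k-\theta(x_k)\bigr)=z\,c_k,\qquad z:=\omega_k-\eta(\underline{\gamma_i})-\eta(\underline{\beta_k})\in\ZC(R),
\]
and the issue is precisely to show $z=0$. Your plan is to apply $\theta$ to an Ore relation $x_k a=\sigma_k(a)x_k+\delta_k(a)$ with $a\in R_{k-1}$ homogeneous and ``compare $x_k$-degrees''. But carrying this out (using only the inductive hypothesis on $R_{k-1}$) yields
\[
\theta(x_k)\,a-\sigma_k(a)\,\theta(x_k)=\eta(\underline{\beta_k})\,\delta_k(a),
\]
which says only that $w:=\theta(x_k)-\eta(\underline{\beta_k})x_k$ satisfies $wa=\sigma_k(a)w$ for all $a\in R_{k-1}$. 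This relation does not involve $\omega_k$ at all, so it cannot produce the identity among $\omega_k,\omega_{p(k)},\eta(\wt(c_k))$ that you need; and there is no evident degree argument forcing such $w$ to vanish (indeed, from the recursion one computes $w=z\,y_i^{-1}y_k$ in $\fract(R)$, which does satisfy $wa=\sigma_k(a)w$ regardless of whether $z=0$).

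The paper closes this gap differently: from $z\,c_k\in y_iR$ and $c_k\in R_{k-1}\setminus y_iR_{k-1}$ (Remark~\ref{T25}(4)), a PBW coefficient comparison over $R_{k-1}[x_k;\sigma_k,\delta_k]\cdots[x_N;\sigma_N,\delta_N]$ shows $z\in y_iR$; since $z\in\ZC(R)\subseteq\NS(R)$ and $s(i)=k<+\infty$, Proposition~\ref{b5} then forces $z=0$. So Proposition~\ref{b5} is not a side technicality about ``dividing by zero or spurious central terms''---it is the decisive lemma that kills $z$. The case $y_k\in\ZC(R)$ needs a further twist (writing $\omega_k=\mu_k+\lambda_k y_k$ and showing separately that $\mu_k=0$), which your outline also does not address.
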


\begin{proof}
It is enough to prove the result by establishing that 
$\theta(x_i)=\eta(\underline{\beta_i})x_i$, for all $i\in [1,N]$.
We do this by proving  that there is an abelian group homomorphism  $\eta_k:Q_k\rightarrow \ZC(R)$
such that $\theta(x_i)=\eta_k(\underline{\beta_i})x_i$ for all $i\in [1,k].$ We proceed by an induction on $k$.  

If $k=1$, then $p(1)=-\infty$, and so $x_1=y_1$. Since $x_1 \notin \ZC(R)$, we get that $\theta (x_1)=\omega_1 x_1$. The result follows by setting  $\eta_1(\underline{\beta_1})=\omega_1$.

Suppose now that the result is true for $k-1$, with $k\geq 2$. Then,
$\theta(a)=\eta_{k-1}(\wt(a))a,$ for all homogeneous elements $a\in R_{k-1}.$ The rest follows in cases.

\underline{Case 1}. $\delta_k=0$. From Lemma~\ref{b9}, $Q_k=Q_{k-1}\oplus \Z\underline{\beta_k}$. Since 
$\delta_k=0,$ we have that $p(k)=-\infty$, hence $y_k=x_k$. As above, our assumptions force $y_k\notin\ZC(R)$ and so $\theta(x_k)=\theta(y_k)=\omega_ky_k=\omega_k x_k$. Thus, we extend 
$\eta_{k-1}$ to $\eta_k$ by setting $\eta_k(\underline{\beta_k}):=\omega_k$.

\underline{Case 2}. $\delta_k\neq 0$ and $y_k\notin\ZC(R)$. Then $Q_k=Q_{k-1}$, by Lemma~\ref{b9}. In this case, for ease of notation, set $\eta:=\eta_{k}=\eta_{k-1}.$ Since $\delta_k\neq 0,$ we have that $p(k)\neq -\infty.$ Set $i:=p(k)\in [1, k-1].$ From \eqref{5T}, we deduce that $y_k=y_ix_k-c_k,$ where $c_k\in R_{k-1}\setminus  y_iR_{k-1} $  (see Remark \ref{T25}). Note that $c_k$ is homogeneous of weight $\wt(x_k)+\wt(y_i)=\underline{\beta_k}+\underline{\gamma_i}$. Apply $\theta$ to $y_k=y_ix_k-c_k$ to obtain
\begin{align*}
\omega_k(y_ix_k-c_k)= \omega_ky_k&=\theta(y_k)=\eta (\underline{\gamma_i})y_ix_k+y_i\theta(x_k)-\eta(\underline{\beta_k}+\underline{\gamma_i})c_k\\
&=\eta(\underline{\gamma_i})y_ix_k-\eta(\underline{\gamma_i})c_k+y_i\theta(x_k)-\eta(\underline{\beta_k})c_k,
\end{align*}
where $\omega_k\in \ZC(R)$. Rearranging terms, we get
\begin{equation}
\label{eq-theta-x_k}
y_i (\omega_k x_k -\eta(\underline{\gamma_i})x_k -\theta(x_k))= (\omega_k -\eta(\underline{\gamma_i}) -\eta(\underline{\beta_k}))c_k.
\end{equation}
Set $z:= \omega_k -\eta(\underline{\gamma_i}) -\eta(\underline{\beta_k}) \in \ZC(R)$, so that $zc_k \in y_iR$.

Since $s(i)=k>k-1$, $y_i$ is a homogeneous prime element of $R_{k-1}$, by \cite[Theorem 4.3]{yak}. From~\eqref{eq-theta-x_k} we have $c_kz=zc_k=y_i r$, for some $r\in R$. Now, expanding $z$ and $r$ in the iterated Ore extension $R_{k-1}[x_k;\sigma_k,\delta_k]\dots [x_N;\sigma_N,\delta_N]$, equating coefficients (in $R_{k-1}$) of equal monomials in $x_k, \ldots, x_N$ coming from the equality $c_k z=y_i r$, using the fact that $y_i$ is a prime element of $R_{k-1}$ and that $c_k \in R_{k-1}\setminus  y_iR_{k-1}$, we can deduce that $z\in y_i R$.

Hence, observing that $s(i)=k < +\infty$, it follows from Proposition \ref{b5} that $z \in  y_iR \cap \NS(R)=\{0\}$. In other words, we have $\omega_k = \eta (\underline{\gamma_i}) +\eta(\underline{\beta_k})$, and we deduce from~\eqref{eq-theta-x_k} that
\[\theta (x_k) = \omega_kx_k-\eta(\underline{\gamma_i}) x_k= \eta(\underline{\beta_k})x_k,\]
as desired.

\underline{Case 3}. $\delta_k\neq 0$ and $y_k\in \ZC(R)$. Then it follows from Lemma~\ref{b9} that $Q_k=Q_{k-1}$. As before, set $\eta:=\eta_{k}=\eta_{k-1}.$ Similarly to the previous case, we have that
 $y_k=y_ix_k-c_k,$ where $c_k\in R_{k-1}\setminus y_iR_{k-1}$, $i=p(k)\in [1, k-1]$, and  $c_k$ is homogeneous of weight $\wt(x_k)+\wt(y_i)=\underline{\beta_k}+\underline{\gamma_i}$. Since $y_k\in \ZC(R)$, we know that  $\theta(y_k)=\omega_k\in\ZC(R)$. Write $\omega_k=\mu_k+\lambda_ky_k$, where $\lambda_k, \mu_k\in \ZC(R)$ with $\deg_{y_k}(\mu_k)=0$. Just as in the previous case, apply $\theta$ to $y_k=y_ix_k-c_k$ to obtain 
$$
\mu_k +y_i (\lambda_k x_k -\eta(\underline{\gamma_i})x_k -\theta(x_k))= (\lambda_k -\eta(\underline{\gamma_i}) -\eta(\underline{\beta_k}))c_k.
$$
Again, we set $z:= \lambda_k -\eta(\underline{\gamma_i}) -\eta(\underline{\beta_k}) \in \ZC(R)$, so that 
$$
\mu_k +y_i (\lambda_k x_k -\eta(\underline{\gamma_i})x_k -\theta(x_k))= zc_k =z(y_ix_k-y_k).
$$
Rearranging terms leads to 
$$
y_i \left( \lambda_k x_k -\eta(\underline{\gamma_i})x_k -\theta(x_k)-zx_k  \right) =  -zy_k -\mu_k \in \ZC(R) \cap y_iR.
$$
Since $s(i)=k <+\infty$, we deduce from Proposition \ref{b5} that 
  \begin{equation}
\label{eq-theta-x_k-2}
y_i \left( \lambda_k x_k -\eta(\underline{\gamma_i})x_k -\theta(x_k)-zx_k  \right) =  -zy_k -\mu_k =0.
\end{equation}
The last equation takes place in the polynomial algebra $\NS(R)=\k_{\q'}[y_j \mid s(j)=+\infty]$ and, by assumption, we have $\deg_{y_k}(\mu_k)=0$. This forces $\mu_k=z=0$ and so we deduce from \eqref{eq-theta-x_k-2} that 
$$\theta (x_k) = \lambda_kx_k-\eta(\underline{\gamma_i}) x_k= \eta(\underline{\beta_k})x_k,$$
as desired.
\end{proof}

Since $x_k$ and $y_k$ are homogeneous elements of $R$, for each $k\in [1,N]$, we have the following immediate corollary.   
\begin{cor}
\label{theta(y_k)inZ(R)y_k}
Under our running assumptions,
$\theta(x_k)\in \ZC(R)x_k$ and $\theta(y_k)\in \ZC(R)y_k$, for all $k\in [1, N].$ 
\end{cor}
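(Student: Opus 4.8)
The plan is to obtain this directly from Proposition~\ref{centralderivation1}. First I would verify that the homogeneous derivation $\theta$ appearing in the decomposition~\eqref{E:D-as-ad-plus-central} meets the two hypotheses of that proposition. Hypothesis~(i), that none of the generators $x_i$ is central in $R$, is one of our standing assumptions on $R$. Hypothesis~(ii) was recorded at the beginning of Subsection~\ref{SS:homogeneous}: Corollary~\ref{C:theta:action:c}, together with the fact that a derivation of $R$ carries central elements to central elements, produces elements $\omega_1,\dots,\omega_N\in\ZC(R)$ with $\theta(y_k)=\omega_k y_k$ when $y_k\notin\ZC(R)$ and $\theta(y_k)=\omega_k$ when $y_k\in\ZC(R)$.

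With both hypotheses in force, Proposition~\ref{centralderivation1} furnishes an abelian group homomorphism $\eta\colon Q\to\ZC(R)$ such that $\theta(a)=\eta(\wt(a))\,a$ for every homogeneous element $a\in R$. Since $x_k$ and $y_k$ are homogeneous, with $\wt(x_k)=\underline{\beta_k}$ and $\wt(y_k)=\underline{\gamma_k}$, specializing this formula gives
\begin{equation*}
\theta(x_k)=\eta(\underline{\beta_k})\,x_k\quad\text{and}\quad\theta(y_k)=\eta(\underline{\gamma_k})\,y_k,
\end{equation*}
and, as $\eta$ takes values in $\ZC(R)$, we conclude that $\theta(x_k)\in\ZC(R)x_k$ and $\theta(y_k)\in\ZC(R)y_k$ for all $k\in[1,N]$.

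There is no genuine obstacle here, since all the content is already packaged in Proposition~\ref{centralderivation1}; the only point meriting a moment's attention is confirming that hypothesis~(ii) of that proposition is literally available at this stage, which it is by Corollary~\ref{C:theta:action:c} and the remark that derivations preserve the center. This is precisely why the statement is flagged as an immediate corollary.
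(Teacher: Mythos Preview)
Your proposal is correct and matches the paper's approach exactly: the paper states this as an immediate corollary of Proposition~\ref{centralderivation1}, noting only that $x_k$ and $y_k$ are homogeneous elements of $R$. Your write-up simply spells out the verification of hypotheses~(i) and~(ii) and the specialization to $a=x_k$ and $a=y_k$, which is precisely the intended one-line argument.
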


We are now in position to describe the first Hochschild cohomology group of $R$. Recall that $\hh(R)=\der(R)/\innder(R)$.

\begin{thm}
\label{thm-Der-QNA}
Let $R=\k[x_1][x_2;\sigma_2,\delta_2]\cdots
[x_N;\sigma_N,\delta_N]$ be a uniparameter QNA of rank $n$. Assume that: 
\begin{enumerate}[(i)]
\item None of the generators $x_i$, with $i\in [1,N]$, is central in $R$;
\item Hypothesis~\ref{hyp} holds.
\end{enumerate}
Then every derivation $D$ of $R$ can be uniquely written as $D=\ad_x + \theta_{\eta}$, where $x \in R$ and $\theta _{\eta}$ is the homogeneous derivation of $R$ associated to the abelian group homomorphism $\eta: Q\rightarrow \ZC(R)$ defined by $\theta_\eta (a) = \eta (\wt(a))a$, for any homogeneous element $a \in R$. 
\end{thm}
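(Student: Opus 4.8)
The proof essentially assembles the results already proved in this section. The plan is the following. Given $D\in\der(R)$, first extend $D$ uniquely to a derivation of the localization $\widehat{R}=RE^{-1}$. By Proposition~\ref{P:ZCs}, $\mathcal{T}_{\widehat\q}$ is a simple quantum torus and $\widehat{R}=\mathcal{T}_{\widehat\q}[z_1, \ldots, z_\ell]$, so Corollary~\ref{prop: partially-localized-q-affine-spaces} applies to yield a unique decomposition $D=\ad_x+\theta$ as derivations of $\widehat{R}$, with $x\in\widehat{R}$ and $\theta(y_i)=\omega_i y_i$ for all $i\in[1,N]\setminus C$, where $\omega_i\in\ZC(\widehat{R})=\ZC(R)=\k[z_1,\ldots,z_\ell]$. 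By Lemma~\ref{L:inner:x-in-R}, in fact $x\in R$, so by Corollary~\ref{cor-inner} both $\ad_x$ and $\theta=D-\ad_x$ are derivations of $R$. Corollary~\ref{C:theta:action:c} then shows that $\theta$ acts on every $y_i$ ($i\in[1,N]$) by $\theta(y_i)=\omega_i y_i$ if $y_i\notin\ZC(R)$ and $\theta(y_i)=\omega_i\in\ZC(R)$ if $y_i\in\ZC(R)$, for suitable $\omega_i\in\ZC(R)$.

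At this point $\theta$ satisfies the hypotheses of Proposition~\ref{centralderivation1}: condition~(i) of that proposition is exactly condition~(i) of the theorem, and condition~(ii) is what we have just recalled. Hence there is an abelian group homomorphism $\eta:Q\to\ZC(R)$ with $\theta(a)=\eta(\wt(a))a$ for every homogeneous $a\in R$; that is, $\theta=\theta_\eta$ in the notation of the statement. This gives the existence of the decomposition $D=\ad_x+\theta_\eta$ with $x\in R$.

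It remains to prove uniqueness. Suppose $\ad_x+\theta_\eta=\ad_{x'}+\theta_{\eta'}$ with $x,x'\in R$. Then $\ad_{x-x'}=\theta_{\eta'}-\theta_\eta=\theta_{\eta'-\eta}$, so it suffices to show that if $\ad_x=\theta_\eta$ for some $x\in R$ and group homomorphism $\eta$, then $\ad_x=0$ and $\theta_\eta=0$. This follows from the uniqueness clause in Corollary~\ref{prop: partially-localized-q-affine-spaces} applied in $\widehat{R}$: both sides are derivations of $\widehat{R}$ of the form $\ad_{(\cdot)}+(\text{central action on the }y_i)$, and the corollary asserts such a decomposition is unique, forcing $\ad_x=0$ as a derivation of $\widehat{R}$ (hence of $R$) and $\theta_\eta(y_i)=0$ for all $i\in[1,N]\setminus C$, whence $\eta(\underline\gamma_i)=0$ for those $i$; combined with $\theta_\eta=\ad_x=0$ one gets that $\theta_\eta$ vanishes on all the $x_j$, so $\eta=0$ on each $\underline\beta_j$ and therefore on all of $Q$.

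\textbf{Main obstacle.} The substantive work — namely pushing $x$ from $\widehat{R}$ down into $R$ (Lemma~\ref{L:inner:x-in-R}, which relies on the intersection-of-localizations Theorem~\ref{b4} and the Vandermonde argument) and controlling the action of $\theta$ on the pivots and on the generators $x_i$ (Corollaries~\ref{C:theta:action:c} and Proposition~\ref{centralderivation1}) — has already been carried out. So the proof of the theorem itself is a short assembly; the only point requiring a little care is the uniqueness statement, where one must make sure the uniqueness in Corollary~\ref{prop: partially-localized-q-affine-spaces} is invoked in $\widehat{R}$ and then transferred back to $R$, using that $\der(R)$ embeds in $\der(\widehat R)$ and that $\{\underline\beta_1,\ldots,\underline\beta_N\}$ generates $Q$ (Lemma~\ref{b9}).
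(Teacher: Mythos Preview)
Your proposal is correct and follows essentially the same assembly as the paper's proof: existence via Lemma~\ref{L:inner:x-in-R}, Corollary~\ref{cor-inner}, Corollary~\ref{C:theta:action:c} and Proposition~\ref{centralderivation1}. The only minor difference is in the uniqueness argument: the paper extends $D$ all the way to the quantum torus $\mathcal{T}_{\q}=R[y_1^{-1},\ldots,y_N^{-1}]$ and invokes the Osborn--Passman uniqueness \cite[Corollary~2.3]{op} there, whereas you stay in $\widehat{R}$ and use Corollary~\ref{prop: partially-localized-q-affine-spaces}; both routes are valid and yield the same conclusion (note that once $\ad_x=0$ in $\widehat{R}$ you immediately get $\theta_\eta=\ad_x=0$ on $R$, so the last clause of your argument is redundant).
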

\begin{proof}
The existence part follows from Lemma~\ref{L:inner:x-in-R}, Corollary~\ref{cor-inner}, Corollary~\ref{C:theta:action:c} and Proposition~\ref{centralderivation1}. The unicity part follows from the unicity of the decomposition of a derivation of a quantum torus as the sum of an inner derivation and a central derivation by \cite[Corollary 2.3]{op} since we can (uniquely) extend any derivation of $R$ to a derivation of the quantum torus $R[y_1^{-1}, \dots y_N^{-1}]=\mathcal{T}_{\bf{q}}$ (see \eqref{R-torus}). 
\end{proof}

\begin{cor}
\label{cor-HH1-QNA}
$\hh(R)$ is a free $\ZC(R)$-module of rank $\rk(Q)=\rk(R)=n$. 
\end{cor}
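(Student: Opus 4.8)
The plan is to read off the structure of $\hh(R)$ directly from Theorem~\ref{thm-Der-QNA}. That theorem gives a direct sum decomposition $\der(R)=\innder(R)\oplus\{\theta_\eta\mid\eta\in\Hom_\Z(Q,\ZC(R))\}$ (the sum is direct because of the uniqueness clause), so passing to the quotient $\hh(R)=\der(R)/\innder(R)$ identifies $\hh(R)$, as a $\ZC(R)$-module, with the set $\{\theta_\eta\mid \eta\in\Hom_\Z(Q,\ZC(R))\}$. Hence the first step is to observe that the assignment $\eta\mapsto\theta_\eta$ is a bijection onto this set, and the second step is to check it is $\ZC(R)$-linear and additive, i.e.\ an isomorphism of $\ZC(R)$-modules $\Hom_\Z(Q,\ZC(R))\xrightarrow{\ \sim\ }\hh(R)$.

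For the bijection: surjectivity is the definition of the set, and injectivity follows since $\theta_\eta=\theta_{\eta'}$ forces $\eta(\wt(a))a=\eta'(\wt(a))a$ for every homogeneous $a$, whence $\eta(\wt(a))=\eta'(\wt(a))$ (as $R$ is a domain), and since the weights $\underline{\beta_i}=\wt(x_i)$ generate $Q$ by Lemma~\ref{b9}, we get $\eta=\eta'$. For $\ZC(R)$-linearity and additivity: given $\eta,\eta'\in\Hom_\Z(Q,\ZC(R))$ and $z\in\ZC(R)$, one checks on homogeneous elements that $\theta_{\eta+\eta'}=\theta_\eta+\theta_{\eta'}$ and $\theta_{z\eta}=z\theta_\eta$ directly from the formula $\theta_\eta(a)=\eta(\wt(a))a$; note $z\theta_\eta$ is again a derivation because $z$ is central. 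Thus $\hh(R)\cong\Hom_\Z(Q,\ZC(R))$ as $\ZC(R)$-modules.

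Finally, since $Q\cong\Z^n$ is free abelian of rank $n$ (where $n=\rk(R)$, as recorded after Definition~\ref{dcgl}), we have $\Hom_\Z(\Z^n,\ZC(R))\cong\ZC(R)^n$ as $\ZC(R)$-modules, a free module of rank $n$; concretely a basis is given by the dual-basis homomorphisms $\eta_i\colon\underline{\beta_j}\mapsto\delta_{ij}$, i.e.\ the derivations $\theta_{\eta_i}$ with $\theta_{\eta_i}(x_j)\in\delta_{ij}\ZC(R)x_j$. This gives $\hh(R)$ a free $\ZC(R)$-basis of $n$ elements, so $\rk_{\ZC(R)}\hh(R)=\rk(Q)=\rk(R)=n$, as claimed.

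I do not anticipate a serious obstacle here: all the substance is already in Theorem~\ref{thm-Der-QNA}, and what remains is the routine homological bookkeeping of turning a direct-sum decomposition of $\der(R)$ into an identification of the quotient $\hh(R)$, plus the standard fact $\Hom_\Z(\Z^n,\ZC(R))\cong\ZC(R)^n$. The only point requiring a moment's care is confirming that the $\theta_\eta$ are well defined as derivations for arbitrary $\eta\in\Hom_\Z(Q,\ZC(R))$ — but this too is implicit in the construction preceding Proposition~\ref{centralderivation1}, since $\eta$ is determined by its values on a generating set of $Q$ and any choice of central scalars $\omega_i$ extends (compatibly with the defining relations of the Ore extension) to such a $\theta_\eta$.
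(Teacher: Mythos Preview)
Your argument is correct and is precisely how the paper intends the corollary to follow from Theorem~\ref{thm-Der-QNA} (the paper gives no separate proof). Two small clean-ups: by Lemma~\ref{b9} only the $\underline{\beta_k}$ with $\delta_k=0$ form a $\Z$-basis of $Q$, so your dual-basis homomorphisms $\eta_i$ should be indexed by those $n$ indices, and for $j$ with $\delta_j\neq0$ one need not have $\theta_{\eta_i}(x_j)=0$; and the check that $\theta_\eta$ is a derivation for arbitrary $\eta$ is more direct than you suggest---on homogeneous $a,b$ one simply has $\theta_\eta(ab)=(\eta(\wt(a))+\eta(\wt(b)))ab=\theta_\eta(a)b+a\theta_\eta(b)$ because $\eta$ is additive and central-valued.
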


The following examples (in fact, non-examples) address the reasonability of the assumptions in Theorem~\ref{thm-Der-QNA}.

The example below shows that the conclusion of our main results can fail if the Hypothesis~\ref{hyp} is removed.

\begin{exa}[{\cite[Corollaire 1.3.3]{ac}}]
Let $R=\mathcal A_\q$, the quantum affine space associated with the matrix $\q=
\begin{psmallmatrix}
1& q & q\\
q^{-1} &1 & q\\
q^{-1} &q^{-1} &1 \\ 
\end{psmallmatrix}
$, where $q\in\k^*$ is not a root of unity. Then $R$ is a uniparameter QNA of rank $3$ with $\ZC(\mathcal{T}_\q)=\k[z]$ for $z=x_1x_2^{-1}x_3$ and $\ZC(R)=\k$, and none of the canonical generators is central in $R$. However, as shown in~\cite[Corollaire 1.3.3]{ac}, $\hh(R)$ is a free $\ZC(R)$-module of rank $4$. Besides the derivations described in the conclusion of Theorem~\ref{thm-Der-QNA}, which induce a $3$-dimensional subspace of $\hh(R)$, there is an additional derivation $\delta$ such that $\delta(x_1)=\delta(x_3)=0$ and $\delta(x_2)=x_1 x_3$. Thus our main results do rely on Hypothesis~\ref{hyp}.
\end{exa}

The following example shows that the first Hochschild cohomology group of an iterated Ore extension, as in~\eqref{T3}, can even fail to be free as a module over the center of the Ore extension, e.g.\ if working with roots of unity.

\begin{exa}[{\cite[Theorem 12]{GG14}}]\label{ex:qweylalg:1}
Let $R=\k[x][y;\sigma, \delta]$ be the \textit{quantum Weyl algebra} defined by the relation $yx=qxy+1$, so $\sigma(x)=qx$ and $\delta(x)=1$. If $q$ is a primitive $\ell$-th root of unity, for some $\ell>1$, then $R$ is not a QNA, although the only condition that it fails is that $q_2=q^{-1}$ is a root of unity, contradicting part (c) of Definition~\ref{dcgl} (we're using the notation $q_k$ introduced in that definition). In this case (see \cite[Theorem 12]{GG14}), $\ZC(R)=\k[x^\ell, y^\ell]$ and $\hh(R)$ is not free over $\ZC(R)$. 
\end{exa}

The next example, related to the Lie algebra $\mathfrak{sl}_2$, shows that the existence of central variables can affect the conclusion of Theorem~\ref{thm-Der-QNA}.

\begin{exa}\label{ex:uqsl2}
Let $R=U_q^+(\mathfrak{sl}_2)=\k[x]$ (see below for the definition of $U_q^+(\mathfrak{g})$). Then $R$ is trivially a uniparameter QNA of rank $1$ satisfying all the hypotheses of Theorem~\ref{thm-Der-QNA} except that the generator $x$ is central. Although the conclusion of Corollary~\ref{cor-HH1-QNA} holds for $R$, the conclusion of Theorem~\ref{thm-Der-QNA} fails, as the usual derivative $\frac{d}{dx}$ does not send $x$ to a (central) multiple of $x$.
\end{exa}

If $R$ is a QNA having central variables $X_1=x_{j_1}, \ldots, X_m=x_{j_m}$, such that none appears in the expressions for the skew-derivations $\delta_i$ in~\eqref{T3}, then the central variables can be added at the end, so that we get a presentation of the form $R=A[X_1, \ldots, X_m]$, with $A$ a QNA with no central variables. Thus, assuming that Theorem~\ref{thm-Der-QNA} can be applied to $A$, we can obtain $\der(R)$ and $\hh(R)$ by using Theorem~\ref{T:generic} in combination with Theorem~\ref{thm-Der-QNA}.

Our final example combines features of Example~\ref{ex:uqsl2} and Example~\ref{ex:qweylalg:1} and shows that, in case there is a central variable which occurs in the expression of a skew-derivation, then Theorem~\ref{T:generic} cannot be applied.

\begin{exa}\label{ex:qweylalg:2}
Let $R=\k[x][y][z;\sigma, \delta]$ be the QNA defined by the relation $zy=qyz+x$, with $q\in\k^*$ not a root of unity and $x$ central in $R$. Thus, $\sigma(x)=x$, $\sigma(y)=qy$, $\delta(x)=0$ and $\delta(y)=x$. Let $\overline R$ be the localization of $R$ at the Ore set of powers of $x$. Then $\overline R=A[x^{\pm 1}]$, a (Laurent) polynomial ring over the quantum Weyl algebra $A$, generated by $yx^{-1}$ and $z$. Applying Theorem~\ref{T:generic} to $\overline R$ and then restricting to $R$, it can be shown that
\begin{equation*}
\der(R)= \innder(R)\oplus \ZC(R)\delta_1\oplus \ZC(R) \delta_2,
\end{equation*}
where $\delta_1(x)=0$, $\delta_1(y)=y$, $\delta_1(z)=-z$, and $\delta_2(x)=x$, $\delta_2(y)=y$, $\delta_2(z)=0$. In particular, there is no derivation of $R$ that annihilates both $y$ and $z$ and sends $x$ to $1$.
\end{exa}


\section{Application to quantized enveloping algebras}
\label{u+}
Let $\mathfrak{g}$ denote a finite-dimensional complex simple Lie algebra of rank $n$ and $U_q(\mathfrak{g})$ be the corresponding quantized enveloping algebra over a base field $\k$ of arbitrary characteristic and deformation parameter $q\in \k^*$ that is not a root of unity. We denote by $E_i,F_i,K_i^{\pm 1}$, with $i\in [1,n]$, the standard generators of $U_q(\mathfrak{g})$. 
Let $U_q^+(\mathfrak{g})$ be the positive part of $U_q(\mathfrak{g})$, that is, the subalgebra of  $U_q(\mathfrak{g})$  generated by the $E_i$ with $i\in [1,n]$. This is the algebra generated by $E_1, \dots ,E_n$, subject to the quantum Serre relations. 

It is well known that  $U_q^+(\mathfrak{g})$ is a uniparameter QNA of rank $n$ and of length equal to the number $N$ of positive roots of $\mathfrak{g}$, see for instance \cite[Chap. 9]{yak1}.

In order to apply the results of the previous section to the uniparameter QNA $U_q^+(\mathfrak{g})$, we need some control over the center of this algebra. We will see that Hypothesis~\ref{hyp} holds for all simple $\mathfrak{g}$ and that the hypothesis on the generators $x_i$ not being central holds as well, as long as $n=\rk(\mathfrak{g})\neq 1$. 

We can immediately tackle the cases for which $\NS(U_q^+(\mathfrak{g}))=\ZC(U_q^+(\mathfrak{g}))$, which ensures that the Hypothesis~\ref{hyp} holds (see Remark~\ref{R:rem:on:hyp}). In fact, it follows from \cite[Remarque 2.2]{caldero1} that normal  and central elements coincide if and only if the longest element $w_0$ of the Weyl group satisfies $w_0=-1$, that is, if and only if $\mathfrak{g}$ is of type $A_1$, $B_n  ~(n\geq 2)$, $C_n ~(n\geq 3)$, $D_n ~(n\geq 4 ~\text{even})$, $G_2$, $F_4$, $E_7$ or $E_8$. The remaining cases still satisfy the Hypothesis~\ref{hyp}, as we will show that, for any simple $\mathfrak g$, there is a partition $(Z_1, \ldots, Z_\ell)$ of $\mathfrak{s}_{+\infty}$ such that, up to nonzero scalar factors, $z_i=\prod_{k\in Z_i}y_k$, for $1\leq i\leq \ell$. The latter implies the Hypothesis~\ref{hyp}, by Remark~\ref{R:rem:on:hyp}.

\begin{thm}\label{T:hyp:star:holds:Uq+g}
Let $\mathfrak{g}$ be a finite-dimensional complex simple Lie algebra of rank $n\geq 1$ and $R=U_q^+(\mathfrak{g})$. There is a partition $(Z_1, \ldots, Z_\ell)$ of $\mathfrak{s}_{+\infty}$ such that, taking $z_i=\prod_{k\in Z_i}y_k$, for $1\leq i\leq \ell$, the following hold:
\begin{enumerate}[(a)]
\item $\ZC(\mathcal{T}_{\q})=\k[z_1^{\pm 1}, \ldots, z_\ell^{\pm 1}]$;
\item $\ZC(\mathcal{A}_{\q})=\k[z_1, \ldots, z_\ell]$.
\end{enumerate}
In particular, the Hypothesis~\ref{hyp} holds for $U_q^+(\mathfrak{g})$.
\end{thm}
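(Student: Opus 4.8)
The plan is to deduce the statement from the classical description of the homogeneous prime elements of $R=U_q^+(\mathfrak{g})$. By Theorem~\ref{homo}, the family $\{y_j\mid j\in\mathfrak{s}_{+\infty}\}$ is, up to nonzero scalars, the complete list of homogeneous prime elements of $R$, and $|\mathfrak{s}_{+\infty}|=n=\rk(\mathfrak{g})$. On the other hand, it is classical (see \cite{caldero1} and the references therein) that $U_q^+(\mathfrak{g})$ has exactly $n$ homogeneous prime elements up to scalar, one element $d_i$ attached to each fundamental weight $\varpi_i$; writing $w_0$ for the longest element of the Weyl group and $\tau$ for the involution of $[1,n]$ determined by $w_0(\alpha_i)=-\alpha_{\tau(i)}$, the $d_i$ are homogeneous of weight $(\mathrm{id}-w_0)\varpi_i$, they pairwise commute (so that $\NS(R)=\k[d_1,\dots,d_n]$ is a commutative polynomial ring), and each $d_i$ is normal with $d_i\,a=q^{\lambda_i(\wt(a))}\,a\,d_i$ for every homogeneous $a\in R$, where $\lambda_i\colon Q\to\Z$ is a fixed additive character; concretely one may take $\lambda_i(\mu)=\langle\mu,\varpi_i^\vee-\varpi_{\tau(i)}^\vee\rangle$, but only the combinatorial fact that $\sum_i m_i\lambda_i=0$ if and only if the integer vector $(m_i)$ is constant on the $\tau$-orbits of $[1,n]$ will be used. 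I would first fix a bijection $\iota\colon\mathfrak{s}_{+\infty}\to[1,n]$ and scalars $\gamma_j\in\k^*$ with $y_j=\gamma_j d_{\iota(j)}$, let $O_1,\dots,O_\ell$ be the $\tau$-orbits of $[1,n]$ (each of size $1$ or $2$), and set $Z_i=\iota^{-1}(O_i)$; then $(Z_1,\dots,Z_\ell)$ is a partition of $\mathfrak{s}_{+\infty}$ and, since the $d_j$ commute, $z_i=\prod_{k\in Z_i}y_k$ is a nonzero scalar multiple of $\prod_{j\in O_i}d_j$, and in particular $z_i\in\mathcal{A}_{\q}$.

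To prove~(a), I would combine the inclusion $\ZC(\mathcal{T}_{\q})\subseteq\NS(R)E_{+\infty}^{-1}=\k[d_1^{\pm1},\dots,d_n^{\pm1}]$ (from \cite[Proposition~2.11]{yak2}) with the standard fact that a central element of a quantum torus is a $\k$-linear combination of central monomials. A Laurent monomial $d^{\underline m}:=\prod_i d_i^{m_i}$ is a product of normal elements of $R$, hence normal in $R$, with $d^{\underline m}\,a=q^{\lambda(\wt(a))}\,a\,d^{\underline m}$ where $\lambda=\sum_i m_i\lambda_i$; thus $d^{\underline m}$ is central in $\mathcal{T}_{\q}$ if and only if $\lambda(\wt(y_k))=0$ for every $k\in[1,N]$. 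Since the weights $\wt(y_1),\dots,\wt(y_N)$ span $Q$ (by~\eqref{5T} and Lemma~\ref{b9}) and $q$ is not a root of unity, this forces $\lambda=0$, i.e.\ $(m_i)$ is constant on $\tau$-orbits, i.e.\ $d^{\underline m}\in\k[z_1^{\pm1},\dots,z_\ell^{\pm1}]$. Hence $\ZC(\mathcal{T}_{\q})=\k[z_1^{\pm1},\dots,z_\ell^{\pm1}]$.

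For~(b), since each $z_i$ lies in $\mathcal{A}_{\q}$, the identity~\eqref{E:chain:center:R:Aq:Tq} gives $\ZC(\mathcal{A}_{\q})=\ZC(\mathcal{T}_{\q})\cap\mathcal{A}_{\q}=\k[z_1^{\pm1},\dots,z_\ell^{\pm1}]\cap\mathcal{A}_{\q}$. Because the sets $Z_i$ are pairwise disjoint and each generator $y_k$ occurs to degree at most $1$ among $z_1,\dots,z_\ell$, a Laurent monomial $\prod_i z_i^{m_i}$ equals a scalar times $\prod_k y_k^{e_k}$, where $e_k=m_i$ for $k\in Z_i$ and $e_k=0$ otherwise; by the PBW basis of $\mathcal{A}_{\q}$ such an element lies in $\mathcal{A}_{\q}$ precisely when all $e_k\ge 0$, i.e.\ when all $m_i\ge 0$. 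Therefore $\k[z_1^{\pm1},\dots,z_\ell^{\pm1}]\cap\mathcal{A}_{\q}=\k[z_1,\dots,z_\ell]$, which is~(b). Finally, the $Z_i$ are pairwise disjoint with $\bigcup_i Z_i=\mathfrak{s}_{+\infty}$ and $z_i=\prod_{k\in Z_i}y_k$, so Hypothesis~\ref{hyp} follows from Remark~\ref{R:rem:on:hyp}(4) by choosing any $c_i\in Z_i$.

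The main obstacle will be the preliminary step: assembling from the literature — and reconciling the differing sign and normalization conventions, which only permute or rescale the $d_i$ and hence leave invariant which monomials are central — the facts that $U_q^+(\mathfrak{g})$ has exactly $n$ homogeneous prime elements up to scalar, that they pairwise commute, and that their quasi-commutation with arbitrary homogeneous elements satisfies the $\tau$-orbit kernel property above. Everything downstream is linear algebra over $Q$ and bookkeeping with the PBW basis of $\mathcal{A}_{\q}$; it may also be worth recording the familiar extreme case $w_0=-1$ (where $\tau=\mathrm{id}$, $\ell=n$, and each $z_i=y_{c_i}$ is already central), which the paper has essentially noted, as a sanity check of the general argument.
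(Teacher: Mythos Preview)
Your proposal is correct and rests on the same preliminary identification as the paper: matching Caldero's homogeneous prime elements $d_i=e_{s(\varpi_i)}$ with the Goodearl--Yakimov generators $\{y_j\mid j\in\mathfrak{s}_{+\infty}\}$ (this is exactly the content of Theorem~\ref{T:Caldero:equals:GY} in Appendix~\ref{AppB}). The difference lies in the order and mechanism of the two parts. The paper first cites Caldero's explicit description of $\ZC(R)$ as a polynomial ring in products over $\tau$-orbits to obtain~(b) via $\ZC(\mathcal{A}_{\q})=\ZC(R)$, and then deduces~(a) by a short localization trick: given $z\in\ZC(\mathcal{T}_{\q})\subseteq\NS(R)E_{+\infty}^{-1}$, it multiplies by a suitable monomial in the $z_i$ to land in $\NS(R)\cap\ZC(\mathcal{T}_{\q})=\ZC(\mathcal{A}_{\q})$. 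You instead compute~(a) directly, using the quasi-commutation characters $\lambda_i$ of the $d_i$ and the fact that $\sum m_i\lambda_i=0$ iff $(m_i)$ is $\tau$-invariant, and then read off~(b) from the PBW basis. Your route is a touch more self-contained (it re-derives the orbit description of the center rather than quoting it), while the paper's route avoids having to pin down the exact form of $\lambda_i$ and instead leans on Caldero's theorem as a black box. Either way the ``main obstacle'' you flag is precisely what the paper isolates in Appendix~\ref{AppB} and in the citations to \cite{caldero,caldero1}.
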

\begin{proof}
We will use \cite{caldero}, but to avoid confusion with the generators of $R$ as an Ore extension, in this proof we denote by $\Delta_i$ the elements denoted in \cite{caldero} by $x_{s(\varpi_i)}$, for $1\leq i\leq n$, where $\varpi_1,\ldots, \varpi_n$ are the fundamental weights. It will be shown in Theorem~\ref{T:Caldero:equals:GY} that, up to a nonzero scalar factor and a permutation of the indices, the elements $\Delta_1, \ldots, \Delta_n$ are precisely the prime homogeneous elements $\{y_i\}_{s(i)=+\infty}$ of $R$, say $\Delta_k=y_{i_k}$, for $1\leq k\leq n$ such that $\mathfrak{s}_{+\infty}=\{i_k\mid 1\leq k\leq n\}$. 

Thus, by Lemme~2.3, Proposition~3.2 and Th\'eor\`eme~3.2 in \cite{caldero}, it follows that there is a partition $(Z_1, \ldots, Z_\ell)$ of $\mathfrak{s}_{+\infty}$ such that, defining $z_i=\prod_{k\in Z_i}y_k$, for $1\leq i\leq \ell$, we get $\ZC(U_q^+(\mathfrak{g}))=\k[z_1, \ldots, z_\ell]$. (For example, in type $A_n$ we have $\ell=\frac{n+1}{2}$ and, if $n$ is odd, we can take 
$Z_k=\{i_k, i_{n+1-k}\}$, $z_k=y_{i_k}y_{i_{n+1-k}}$, for $1\leq k<\frac{n+1}{2}$, and $Z_{\frac{n+1}{2}}=\left\{i_{\frac{n+1}{2}}\right\}$, $z_{\frac{n+1}{2}}=y_{i_{\frac{n+1}{2}}}$.)

By~\eqref{E:chain:center:R:Aq:Tq}, we have that $\ZC(\mathcal{A}_{\q})=\ZC(R)=\k[z_1, \ldots, z_\ell]$. Let us consider the center of the quantum torus $\mathcal{T}_{\q}$. Clearly, $\ZC(\mathcal{T}_{\q})\supseteq \k[z_1^{\pm 1}, \ldots, z_\ell^{\pm 1}]$. For the reverse inclusion, let $z\in\ZC(\mathcal{T}_{\q})$. By \cite[Proposition 2.11]{yak2}, we have $\ZC(\mathcal{T}_\q)\subseteq \NS(R)E_{+\infty}^{-1}$, so there is a monomial $u$ in the variables $Y_{+\infty}$ such that $zu\in\NS(R)$. As the supports of the central elements $z_1, \ldots, z_\ell$ cover $\mathfrak{s}_{+\infty}$, we can assume that $u$ is a monomial in the $z_i$. Thus, $zu\in\NS(R)\cap\ZC(\mathcal{T}_\q)=\ZC(\mathcal{A}_{\q})$, by~\eqref{E:chain:center:R:Aq:Tq}. So $z\in\ZC(\mathcal{A}_{\q})u^{-1}\subseteq\k[z_1^{\pm 1}, \ldots, z_\ell^{\pm 1}]$.

Whence, starting with $\ZC(\mathcal{T}_{\q})=\k[z_1^{\pm 1}, \ldots, z_\ell^{\pm 1}]$ with the $z_i$ as above, we see that the Hypothesis~\ref{hyp} holds, using Remark~\ref{R:rem:on:hyp}.
\end{proof}

In the case where $\mathfrak{g}$ is of type $A_1$, we have that $U_q^+(\mathfrak{g}) = \k [E_1]$ and so the first assumption of Theorem~\ref{thm-Der-QNA} is not satisfied in that case. However, in all other cases, none of the root vectors are central (or, equivalently, each simple root appears at least twice in the support of any reduced decomposition of the longest element $w_0$ of the Weyl group $W$, see for instance \cite[Section 6]{Meriaux}), whence all assumptions of Theorem~\ref{thm-Der-QNA} are satisfied for $U_q^+(\mathfrak{g})$ when $\mathfrak{g}$ is not of rank $1$.

We can give an easy description of the homogeneous derivations of $U_q^+(\mathfrak{g})$: $\wt(E_1)$, \ldots, $\wt(E_n)$ are free generators of $Q$ so, for $i\in [1,n]$, if $\alpha^*_i:Q\rightarrow \ZC(R)$ is the group homomorphism that sends $\wt(E_j)$ to $\delta_{ij}1\in \ZC(R)$, then $\Hom(Q, \ZC(R))$ is the free $\ZC(R)$-module on the basis $\alpha^*_1$, \ldots, $\alpha^*_n$. Set $D_i=\theta_{\alpha^*_i}$. Then $D_i (E_j)=\delta_{ij}E_j$ and $\hh(U_q^+(\mathfrak{g}))$ is a free $\ZC(U_q^+(\mathfrak{g}))$-module of rank $n=\rk(\mathfrak{g})$ with basis $\{\overline{D}_1$, ..., $\overline{D}_n\}$.

As a consequence of the above discussion, we deduce from Theorem \ref{thm-Der-QNA} and Corollary \ref{cor-HH1-QNA} the following description of the first cohomology group of $U_q^+(\mathfrak{g})$.

\begin{thm}
\label{finalder}
Let $R=U_q^+(\mathfrak{g})$, where $\mathfrak{g}$ is a finite-dimensional complex simple Lie algebra of rank $n\geq 2$. Then:
\begin{enumerate}[(a)]
\item $\der(R)=\innder(R)\oplus \displaystyle{\bigoplus_{k=1}^n} \ZC(R)D_k$.
\item $\hh(R)$ is a free $\ZC(R)$-module of rank $n$ with basis $\{\overline{D}_1, \ldots, \overline{D}_n\}$.
\end{enumerate}
\end{thm}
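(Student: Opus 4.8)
The plan is to obtain Theorem~\ref{finalder} as a direct specialization of the general structure theorem for derivations of QNAs, Theorem~\ref{thm-Der-QNA}, together with Corollary~\ref{cor-HH1-QNA}, once the two hypotheses of Theorem~\ref{thm-Der-QNA} have been checked for $R=U_q^+(\mathfrak g)$. Recall that $U_q^+(\mathfrak g)$ is a uniparameter QNA of rank $n$ and length $N$ equal to the number of positive roots of $\mathfrak g$. That Hypothesis~\ref{hyp} holds for $U_q^+(\mathfrak g)$ is precisely the content of Theorem~\ref{T:hyp:star:holds:Uq+g}, so only condition~(i) of Theorem~\ref{thm-Der-QNA} remains, namely that none of the Ore generators $x_i$ is central. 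As recalled in the paragraph preceding the statement, for simple $\mathfrak g$ this is equivalent to every simple root occurring at least twice in the support of a reduced expression of the longest Weyl group element $w_0$, which fails only in rank $1$ (see \cite[Section 6]{Meriaux}); since we assume $n\geq 2$, condition~(i) is satisfied.

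With both hypotheses verified, Theorem~\ref{thm-Der-QNA} applies and yields, for every $D\in\der(R)$, a unique decomposition $D=\ad_x+\theta_\eta$ with $x\in R$ and $\eta\in\Hom_\Z(Q,\ZC(R))$, where $Q\cong X(\mathcal H)\cong\Z^n$ and $\theta_\eta(a)=\eta(\wt(a))a$ on homogeneous elements. The next step is to exhibit an explicit $\ZC(R)$-basis of the space of homogeneous derivations. Since $E_1,\dots,E_n$ are $\mathcal H$-eigenvectors and $\wt(E_1),\dots,\wt(E_n)$ form a $\Z$-basis of $Q$ (the simple root vectors being among the Ore generators with vanishing skew-derivation, one invokes Lemma~\ref{b9}), the dual homomorphisms $\alpha_i^*\colon Q\to\ZC(R)$ determined by $\alpha_i^*(\wt(E_j))=\delta_{ij}1$ form a free $\ZC(R)$-basis of $\Hom_\Z(Q,\ZC(R))$. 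Putting $D_i:=\theta_{\alpha_i^*}$ gives $D_i(E_j)=\delta_{ij}E_j$, and the assignment $\eta\mapsto\theta_\eta$ is a $\ZC(R)$-linear isomorphism from $\Hom_\Z(Q,\ZC(R))$ onto $\bigoplus_{k=1}^n\ZC(R)D_k$; its injectivity (equivalently, the $\ZC(R)$-linear independence of $D_1,\dots,D_n$) follows by evaluating $\sum_k c_kD_k$ on $E_j$ and using that $R$ is a domain. Feeding this back into the decomposition above gives part~(a), $\der(R)=\innder(R)\oplus\bigoplus_{k=1}^n\ZC(R)D_k$, the directness of the sum being a consequence of the uniqueness clause of Theorem~\ref{thm-Der-QNA} (a homogeneous derivation killing every $E_j$ is zero, so $\innder(R)\cap\bigoplus_k\ZC(R)D_k=0$).

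Part~(b) then follows immediately by passing to $\hh(R)=\der(R)/\innder(R)$: since the sum in~(a) is direct, the classes $\overline D_1,\dots,\overline D_n$ constitute a free $\ZC(R)$-basis of $\hh(R)$, so $\hh(R)$ is free of rank $n=\rk(\mathfrak g)$; this is also exactly Corollary~\ref{cor-HH1-QNA} read for $R=U_q^+(\mathfrak g)$. I do not anticipate any genuine obstacle here, as the substantive work is already contained in Theorem~\ref{thm-Der-QNA}, Corollary~\ref{cor-HH1-QNA} and Theorem~\ref{T:hyp:star:holds:Uq+g}; the only point deserving care is the bookkeeping identifying $\{D_1,\dots,D_n\}$ as a $\ZC(R)$-basis, which rests on the elementary fact that the weights of the Chevalley generators freely generate $Q\cong\Z^n$.
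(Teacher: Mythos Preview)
Your proposal is correct and follows essentially the same route as the paper: verify Hypothesis~\ref{hyp} via Theorem~\ref{T:hyp:star:holds:Uq+g}, dispose of central generators using \cite[Section 6]{Meriaux} for $n\geq 2$, and then read off the result from Theorem~\ref{thm-Der-QNA} and Corollary~\ref{cor-HH1-QNA}, with the $D_i$ arising from the dual basis to $\wt(E_1),\dots,\wt(E_n)$. The only cosmetic difference is that the paper simply asserts that $\wt(E_1),\dots,\wt(E_n)$ freely generate $Q$ (as the simple roots span the root lattice) rather than routing this through Lemma~\ref{b9}.
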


We note that similar results have been obtained for other QNAs not satisfying our hypotheses, e.g. quantum matrices~\cite{sltl}. However, the methods developed in the present paper do not yet allow to prove this result in full generality.

\appendix

\section{Proof of Theorem~\ref{b4} without the symmetry assumption}\label{App}

We will provide a proof of Theorem~\ref{b4} without using the assumption that the QNA $R$ is symmetric. We need a few preliminary results.

\begin{lem}
\label{b8}
Let $j\in [1,N]$ and suppose that $rw \in y_jR$, with $w\in R$ and $r \in R_j$, where $R_j$ is as defined in \eqref{R_k}. If $r\not\in y_jR_j$ then $w\in y_jR$.  
\end{lem}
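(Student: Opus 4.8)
The statement is Lemma~\ref{b8}: if $rw\in y_jR$ with $r\in R_j$ and $r\notin y_jR_j$, then $w\in y_jR$. The natural approach is to work in the iterated Ore extension presentation $R=R_j[x_{j+1};\sigma_{j+1},\delta_{j+1}]\cdots[x_N;\sigma_N,\delta_N]$ and argue by induction on the highest variable $x_t$ appearing in $w$ (equivalently, a double induction on the pair consisting of the index $t$ of the top variable and its degree $\deg_{x_t}(w)$). The base case is $w\in R_j$: then $rw\in y_jR\cap R_j$, and since $y_jR_j$ is a completely prime ideal of $R_j$ (because $y_j$ is a homogeneous prime element of $R_j$, as $s(j)>j-1$; see Theorem~\ref{homo} and Remark~\ref{T25}) and $R=R_j[x_{j+1};\ldots]\cdots$ is a domain with $y_jR\cap R_j=y_jR_j$, we get $rw\in y_jR_j$; then $r\notin y_jR_j$ forces $w\in y_jR_j\subseteq y_jR$.

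\textbf{Inductive step.} Suppose $w$ involves the variables $x_{j+1},\ldots,x_t$ with $x_t$ actually occurring, say $w=\sum_{i=0}^{d} w_i x_t^{\,i}$ with $w_i\in R_{t-1}$ (so $R$ is built from $R_{t-1}$ by the Ore extensions in $x_t,\ldots,x_N$, but since only $x_t$ occurs we may view $w\in R_{t-1}[x_t;\sigma_t,\delta_t]$). Write $rw=\sum_i (r w_i) x_t^{\,i}$ — this is legitimate because $r\in R_j\subseteq R_{t-1}$ and multiplication on the left by an element of $R_{t-1}$ does not change the $x_t$-degree filtration: $r\cdot(w_i x_t^{\,i})$ has leading term $(rw_i)x_t^{\,i}$ plus lower-degree terms, but actually left multiplication by $r\in R_{t-1}$ is exact on coefficients since $x_t$ is to the right. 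Now I need to understand $y_jR$ in this presentation. Since $y_j\in R_j\subseteq R_{t-1}$ and $y_j$ quasi-commutes with $x_t$ (by Definition~\ref{dcgl}(i), $\sigma_t(y_j)=\lambda y_j$ for a scalar, and one must check $\delta_t(y_j)$ — here is a subtlety: $\delta_t(y_j)$ need not vanish). The cleanest route: an element $u=\sum_i u_i x_t^{\,i}\in R_{t-1}[x_t;\sigma_t,\delta_t]$ lies in $y_jR$ iff each "coefficient" lies appropriately in $y_jR_{t-1}$ — more precisely, reduce modulo the completely prime ideal $y_jR$: the quotient $R/y_jR$ is a domain (since $y_j$ is prime), and the images $\bar x_{j+1},\ldots,\bar x_N$ still generate it over $\bar R_j = R_j/y_jR_j$ as an iterated Ore extension. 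So $rw\in y_jR$ means $\bar r\,\bar w=0$ in the domain $R/y_jR$; since $\bar r\neq 0$ (as $r\notin y_jR_j$), we conclude $\bar w=0$, i.e. $w\in y_jR$ directly — and this gives the whole lemma in one stroke without induction.

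\textbf{The main obstacle} is justifying that $R/y_jR$ is a domain and that $\bar r\neq 0$ there. The first point is exactly the definition of a prime element: $y_jR$ is completely prime in $R$ because $y_j$ is a homogeneous prime element of $R$ — but wait, we only know $y_j$ is prime in $R_j$, not necessarily in $R$ (indeed $s(j)$ may be finite). This is the real crux: one must show $y_jR_j$ prime in $R_j$ implies $y_jR$ prime in $R$, which follows because $R$ is a free right $R_j$-module on the standard monomials in $x_{j+1},\ldots,x_N$ and each $\sigma_k,\delta_k$ descends to $R_j/y_jR_j$ (this uses that $\sigma_k(y_j)$ is a scalar multiple of $y_j$ and $\delta_k(y_j)\in y_jR_{k-1}$? — the latter is \emph{not} automatic, so one instead argues that $y_jR\cap R_j = y_jR_j$ and that $R/y_jR$ is obtained from $R_j/y_jR_j$ by iterated Ore extensions, hence a domain). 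Once $R/y_jR$ is known to be a domain, $\bar r\ne 0$ is immediate from $r\notin y_jR_j=y_jR\cap R_j$, and the conclusion $\bar w=0$, i.e. $w\in y_jR$, follows from cancellation. I expect the bulk of the write-up to be the careful verification that the Ore extension structure passes to the quotient by $y_jR$ so that $R/y_jR$ is a domain; the rest is formal.
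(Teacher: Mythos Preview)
Your preferred approach---passing to the quotient $R/y_jR$ and using that it is a domain---has a genuine gap that cannot be repaired in the case that matters. When $s(j)<+\infty$, the element $y_j$ is \emph{not} normal in $R$, so $y_jR$ is only a right ideal and $R/y_jR$ is not a ring at all. Concretely, set $k=s(j)$, so $p(k)=j$. By Remark~\ref{T25}(2) we have $\delta_k(y_j)=\alpha_{k,j}(q_k-1)c_k$, while Remark~\ref{T25}(4) gives $c_k\notin y_jR_{k-1}$. Since $y_jR_k\cap R_{k-1}=y_jR_{k-1}$ (from the left $R_{k-1}$-module basis of $R_k$), it follows that $x_k y_j=\alpha_{k,j}\,y_j x_k+\delta_k(y_j)\notin y_jR_k$, so $y_j$ already fails to be normal in $R_k\subseteq R$. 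Replacing $y_jR$ by the two-sided ideal $(y_j)$ does not save the argument either: $y_j$ is not a prime element of $R$ when $s(j)<+\infty$, so $R/(y_j)$ need not be a domain, and in any case you would only obtain $w\in(y_j)$, which is strictly weaker than the conclusion $w\in y_jR$ that the lemma asserts.

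The paper's proof avoids all of this by never forming a quotient. One writes $rw=y_js$ for some $s\in R$, expands both $w$ and $s$ in the PBW basis $\{x_{j+1}^{f_{j+1}}\cdots x_N^{f_N}\}$ with left coefficients in $R_j$, and observes that since $r$ and $y_j$ both lie in $R_j$, left multiplication by either simply multiplies each PBW coefficient. Equating coefficients gives $r\,w_{\underline f}=y_j\,s_{\underline f}$ in $R_j$ for every $\underline f$; now $y_jR_j$ is completely prime in $R_j$ (by Theorem~\ref{homo}, since $s(j)>j$) and $r\notin y_jR_j$, whence each $w_{\underline f}\in y_jR_j$, and summing yields $w\in y_jR$. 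Your first instinct---the PBW expansion, before you abandoned it for the quotient route---was exactly this; the point you were missing is that one never needs to move $y_j$ past any $x_k$, so the failure of normality is irrelevant.
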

\begin{proof}
Write $R=R_j[x_{j+1}; \sigma_{j+1}, \delta_{j+1}]\cdots [x_N; \sigma_N, \delta_N]$. 
Since $rw\in y_jR$, we have that $rw=y_js$, for some $s\in R$.
Now $w,s\in R$  can be written as:
\begin{equation*}
w=\sum_{{\underline f}\in (\Z_{\geq 0})^{N-j}}w_{\underline f} x_{j+1}^{f_{j+1}}\cdots x_N^{f_N} \ \ \text{and} \ \ \ s=\sum_{{\underline g}\in (\Z_{\geq 0})^{N-j}}s_{\underline g}x_{j+1}^{g_{j+1}}\cdots x_N^{g_N}, 
\end{equation*}
where ${\underline f}=(f_{j+1}, \ldots, f_N)$, ${\underline g}=(g_{j+1}, \ldots, g_N)$ and $w_{\underline f}, s_{\underline g}\in R_j$.

Therefore, $rw=y_js$ implies that
\begin{equation*}
\sum_{{\underline f}\in (\Z_{\geq 0})^{N-j}} rw_{\underline f}x_{j+1}^{f_{j+1}}\cdots x_N^{f_N} =\sum_{{\underline g}\in (\Z_{\geq 0})^{N-j}}y_js_{\underline g}x_{j+1}^{g_{j+1}}\cdots x_N^{g_N}. 
\end{equation*}
Consequently, $rw_{\underline f}=y_js_{\underline f}$ for all ${\underline f}\in (\Z_{\geq 0})^{N-j}$. Note that $y_j$ is a prime element of $R_j$ since $s(j)>j$, hence $y_jR_j$ is a completely prime ideal of $R_j$. Since we assume that $r\not\in y_jR_j$, we have that $w_{\underline f}\in y_jR_j$, for all ${\underline f}\in (\Z_{\geq 0})^{N-j}$.
Therefore, $w_{\underline f}=y_ju_{\underline f}$, for some $u_{\underline f}\in R_j.$ It follows that there exists $d\in R$ such that $w=y_jd\in y_jR.$ 
\end{proof}

\begin{pro}
\label{L}
For all $i, j\in [1, N]$ with $i\neq j$, we have that
 $y_iR\cap y_jR=y_iy_jR$.
\end{pro}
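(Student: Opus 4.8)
The plan is to deduce Proposition~\ref{L} essentially as a corollary of Lemma~\ref{dd} and Lemma~\ref{b8}: the nontrivial inclusion $y_iR\cap y_jR\subseteq y_iy_jR$ will follow from a single application of Lemma~\ref{b8}, while the reverse inclusion is immediate from the fact that $y_i$ and $y_j$ quasi-commute.

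First I would record the easy inclusion. Since $y_1,\dots,y_N$ generate the quantum affine space $\mathcal{A}_\q$, there is $\xi\in\k^*$ with $y_iy_j=\xi\,y_jy_i$, so $y_iy_jR\subseteq y_iR$ and $y_iy_jR=y_jy_iR\subseteq y_jR$; hence $y_iy_jR\subseteq y_iR\cap y_jR$.

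For the reverse inclusion, let $u\in y_iR\cap y_jR$ and write $u=y_ia=y_jb$ with $a,b\in R$. I would argue in two symmetric cases. Assume first $i<j$. By construction (see~\eqref{5T}) we have $y_i\in R_i\subseteq R_j$, and Lemma~\ref{dd} gives $y_i\notin y_jR$, hence in particular $y_i\notin y_jR_j$. Then Lemma~\ref{b8} applies to the relation $y_ia=y_jb\in y_jR$, taking $r=y_i$ and $w=a$, and yields $a\in y_jR$; writing $a=y_jc$ we conclude $u=y_ia=y_iy_jc\in y_iy_jR$. If instead $i>j$, we interchange the roles of $i$ and $j$: now $y_j\in R_j\subseteq R_i$ and $y_j\notin y_iR_i$ by Lemma~\ref{dd}, so Lemma~\ref{b8} applied at the index $i$ to $y_jb=y_ia\in y_iR$ (with $r=y_j$ and $w=b$) gives $b\in y_iR$; writing $b=y_id$ we get $u=y_jb=y_jy_id\in y_jy_iR=y_iy_jR$, the last equality by quasi-commutativity. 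In either case $u\in y_iy_jR$, which finishes the proof.

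I do not anticipate a genuine obstacle here: the two substantive ingredients, namely that no $y_i$ is a right multiple of a distinct $y_j$ (Lemma~\ref{dd}) and the cancellation property for the prime element $y_j$ of $R_j$ (Lemma~\ref{b8}), are already available. The only point requiring a little care is bookkeeping: one must separate the cases $i<j$ and $i>j$ so that the element playing the role of $r$ in Lemma~\ref{b8} genuinely lies in the relevant subalgebra ($R_j$ in the first case, $R_i$ in the second), and one must use the quasi-commutation relation to convert $y_jy_iR$ into $y_iy_jR$ at the end of the second case.
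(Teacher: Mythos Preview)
Your proof is correct and follows essentially the same approach as the paper's own proof: both obtain the nontrivial inclusion by combining Lemma~\ref{dd} (to ensure $y_i\notin y_jR_j$) with a single application of Lemma~\ref{b8}, and the easy inclusion from quasi-commutativity. The only cosmetic difference is that the paper dispatches the symmetric case with a ``without loss of generality assume $i<j$'' at the outset, whereas you write out both cases $i<j$ and $i>j$ explicitly.
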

\begin{proof}
Without loss of generality, we assume that $i<j$. Clearly, $y_iR\cap y_jR\supseteq y_iy_jR$ as $y_i$ and $y_j$ quasi-commute. For the reverse inclusion, let 
$x\in y_iR\cap y_jR$. Then, there exists $w\in R$ such that 
$x= y_iw\in y_jR$.  
 Since $y_iw\in y_jR$, with $y_i\not\in y_jR_j$ (see Lemma~\ref{dd}), it follows from Lemma~\ref{b8} that $w\in y_jR$. This implies that 
 $w=y_jd$, for some $d\in R$. Consequently,  $x= y_iw=y_iy_jd\in y_iy_jR$. This establishes the reverse inclusion. 
\end{proof}

We are now ready to give the general proof of Theorem~\ref{b4}.

\begin{proof}[Proof of Theorem~\ref{b4}]
We begin with some temporary notation. Given a subset $K\subseteq [1,N]$, let 
\begin{equation*}
\left(\Z_{\geq 0}\right)^K:=\{(f_1, \ldots, f_N)\in \left(\Z_{\geq 0}\right)^N\mid f_i=0\ \text{for all } i\notin K\}, 
\end{equation*}
and for ${\underline f}=(f_1, \ldots, f_N)\in\left(\Z_{\geq 0}\right)^K$, let ${\underline y}^{\underline f}:=y_1^{f_1}\cdots y_N^{f_N}$. We denote the canonical basis of the free abelian group $\Z^N$ by $(\epsilon_1, \ldots, \epsilon_N)$, so that $\epsilon_k$ is the element of $\Z^N$ with all of its coordinates equal to $0$ except for the $k$-th coordinate, which is $1$.

The inclusion $RE_{I\cap J}^{-1}\subseteq RE_I^{-1} \cap RE_J^{-1}$ is clear. For the reverse inclusion, let $x\in RE_I^{-1} \cap RE_J^{-1}$. As $x\in RE_I^{-1}$, there is ${\underline f}\in\left(\Z_{\geq 0}\right)^I$, minimal with respect to the lexicographic order, so that ${\underline y}^{\underline f} x\in R$. Suppose, by way of contradiction, that $x\notin RE_{I\cap J}^{-1}$. Then, ${\underline f}\notin\left(\Z_{\geq 0}\right)^{I\cap J}$, so there is $i\notin I\cap J$ with $f_i>0$. As ${\underline f}\in\left(\Z_{\geq 0}\right)^I$, it follows that $i\in I\setminus J$.

Let ${\underline f'}={\underline f}-\epsilon_i\in\left(\Z_{\geq 0}\right)^I$ and set $x'=\underline y^{\underline f'}x$. By the minimality of ${\underline f}$, $x'\notin R$. However, since the $y_k$ pairwise quasi-commute, we have $y_ix'\in R$ and $x'\in RE_J^{-1}$ because $x\in RE_J^{-1}$.

Repeating the argument above with $x'$, we deduce that there is ${\underline g}\in\left(\Z_{\geq 0}\right)^J$, minimal with respect to the lexicographic order, so that ${\underline y}^{\underline g} x'\in R$. As $x'\notin R$, there is some $j$ such that $g_j>0$; in particular, $j\in J$ and thus $i\neq j$. Then, using Proposition~\ref{L} and the fact that the $y_k$ pairwise quasi-commute, we get
\begin{equation*}
y_i {\underline y}^{\underline g} x'\in y_i R\cap y_j R=y_i y_j R.
\end{equation*}
Recalling that $R$ is a domain, we deduce that ${\underline y}^{\underline g'} x'\in R$, for ${\underline g'}={\underline g}-\epsilon_j\in\left(\Z_{\geq 0}\right)^J$. This contradicts the minimality of ${\underline g}\in\left(\Z_{\geq 0}\right)^J$ and this contradiction implies that indeed $x\in RE_{I\cap J}^{-1}$.
\end{proof}

\section{The normal elements in $U_q^+(\mathfrak{g})$}\label{AppB}

To have a complete proof that the Hypothesis~\ref{hyp} holds for $U_q^+(\mathfrak{g})$, as stated in Theorem~\ref{T:hyp:star:holds:Uq+g}, we need to show that the prime homogeneous elements $Y_{+\infty}$ are precisely, up to re-ordering and scaling, the elements introduced by Caldero in \cite{caldero, caldero1}. We will sketch this here, following mostly \cite{caldero1} and the notation therein (note that there are some divergences in notation from \cite{caldero} to \cite{caldero1}).

Fix a finite-dimensional complex simple Lie algebra
$\mathfrak{g}$ of rank $n\geq 1$ with $P=\bigoplus_{i=1}^n \Z \varpi_i$ the integral weight lattice and $P^+=\bigoplus_{i=1}^n \Z_{\geq 0}\varpi_i$ the set of dominant integral weights, where $\varpi_1,\ldots, \varpi_n$ are the fundamental weights. Let $w_0$ be the longest element of the Weyl group of $\mathfrak{g}$ and denote by $1+ w_0$ and $1- w_0$ the endomorphisms $1_P+ w_0 1_P$ and $1_P- w_0 1_P$ of $P$, respectively, where $1_P$ is the identity on $P$.

For each $\mu\in P^+$, there is a normal element $e_{s(\mu)}\in\NS(U_q^+(\mathfrak{g}))$ of weight $(1- w_0)(\mu)$ and these elements can be chosen so that $e_{s(\mu+\lambda)}=e_{s(\mu)}e_{s(\lambda)}$, for all $\lambda, \mu\in P^+$ (see~\cite[Section 1.5]{caldero1}). Since the set $\{e_{s(\mu)}\mid \mu\in P^+\}$ is linearly independent, it follows that the elements $\Delta_i:=e_{s(\varpi_i)}$, with $1\leq i\leq n$, generate a commutative polynomial subalgebra of $\NS(U_q^+(\mathfrak{g}))$, of Gelfand--Kirillov dimension $n$.

Let $0\neq y\in \NS(U_q^+(\mathfrak{g}))$. By~\cite[Th\'eor\`eme 2.2]{caldero1}, there exist $k\geq 1$, (distinct) $\mu_1, \ldots, \mu_k\in P^+$ with $\mu_i-\mu_j\in\ker(1+w_0)$ for all $i,j$, and $\lambda_1, \ldots, \lambda_k\in\k^*$ such that
\begin{equation*}
y= \sum_{i=1}^k \lambda_i e_{s(\mu_i)}.
\end{equation*}
If we further assume that $y$ is homogeneous, then we had the additional condition that $\mu_i-\mu_j\in\ker(1-w_0)$ for all $i,j$. As $\ker(1+w_0)\cap \ker(1-w_0)=\{0\}$. It follows that $k=1$ and and $y$ is a scalar multiple of some $e_{s(\mu)}$, with $\mu\in P^+$. So, up to a nonzero scalar factor, the normal homogeneous elements of $U_q^+(\mathfrak{g})$ are exactly the monomials in the $\Delta_i$ with $1\leq i\leq n$. Whence, with the additional assumption that $y$ is prime, we deduce that $y$ is a nonzero scalar multiple of $\Delta_i$, for some $1\leq i\leq n$. As, up to scalars, there are exactly $\rk(R)=\rk(\mathfrak{g})=n$ homogeneous prime elements in $R$, it follows that, up to scalars, $Y_{+\infty}=\{\Delta_1, \ldots, \Delta_n\}$.

\begin{thm}\label{T:Caldero:equals:GY}
Let $R=U_q^+(\mathfrak{g})$ and $\Delta_i=e_{s(\varpi_i)}$ be as above. Then,
up to a nonzero scalar factor and a permutation of the indices, the elements $\Delta_1, \ldots, \Delta_n$ are precisely the prime homogeneous elements $\{y_i\}_{i \in \mathfrak{s}_{+\infty}}$ of $R$. In particular, $\NS(U_q^+(\mathfrak{g}))$ is a commutative polynomial algebra.
\end{thm}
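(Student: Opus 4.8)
The plan is to identify the two lists of prime homogeneous elements by a counting argument, once we know that every homogeneous normal element of $R=U_q^+(\mathfrak{g})$ is, up to a scalar, a monomial in the $\Delta_i=e_{s(\varpi_i)}$. First I would recall from \cite[Section 1.5]{caldero1} that the family $\{e_{s(\mu)}\mid \mu\in P^+\}$ consists of normal elements of $R$, is $\k$-linearly independent, and is multiplicative in the sense that $e_{s(\mu+\lambda)}=e_{s(\mu)}e_{s(\lambda)}$ for all $\lambda,\mu\in P^+$. Hence the $\Delta_i$ quasi-commute pairwise and the monomials $\Delta_1^{a_1}\cdots\Delta_n^{a_n}$ ($a_i\geq 0$) are $\k$-linearly independent; since by \eqref{T9} and \cite[Theorem 4.6]{yak} the normal subalgebra $\NS(R)$ is a quantum affine space on $n$ generators and has Gelfand--Kirillov dimension $n$, it follows that $\k[\Delta_1,\ldots,\Delta_n]$ is a polynomial subalgebra of $\NS(R)$.

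Next I would invoke Caldero's structure theorem \cite[Th\'eor\`eme 2.2]{caldero1}: every nonzero $y\in\NS(R)$ has the form $y=\sum_{i=1}^k\lambda_i e_{s(\mu_i)}$ with $\lambda_i\in\k^*$ and distinct $\mu_i\in P^+$ satisfying $\mu_i-\mu_j\in\ker(1+w_0)$ for all $i,j$. If moreover $y$ is an $\mathcal{H}$-eigenvector, then all summands $e_{s(\mu_i)}$ carry the same weight $(1-w_0)(\mu_i)$, so $\mu_i-\mu_j\in\ker(1-w_0)$ as well; as $2\cdot 1_P=(1+w_0)+(1-w_0)$ is injective on $P$, we get $\ker(1+w_0)\cap\ker(1-w_0)=\{0\}$, forcing $k=1$. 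Thus every homogeneous normal element of $R$ is a scalar multiple of some $e_{s(\mu)}$, hence of a monomial in the $\Delta_i$ by multiplicativity. Combined with the previous paragraph this gives $\NS(R)=\k[\Delta_1,\ldots,\Delta_n]$, a commutative polynomial algebra, which is the ``in particular'' assertion.

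It then remains to single out the prime homogeneous elements. Writing $\mu=\sum_i a_i\varpi_i\in P^+$ we have $e_{s(\mu)}=\prod_i\Delta_i^{a_i}$, so $e_{s(\mu)}$ is irreducible in the polynomial ring $\k[\Delta_1,\ldots,\Delta_n]$, and hence a prime element of $R$, precisely when $\mu=\varpi_i$ for some $i$; therefore $\Delta_1,\ldots,\Delta_n$ are, up to scalars, exactly the homogeneous prime elements of $R$. On the other hand, by Theorem~\ref{homo} (applied with $k=N$) the set $\{y_j\mid j\in\mathfrak{s}_{+\infty}\}$ is the complete list of homogeneous prime elements of $R$ up to scalars, and it has $n=\rk(\mathfrak g)$ members. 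Matching the two lists yields a bijection, so up to nonzero scalar factors and a permutation of indices $\{y_j\}_{j\in\mathfrak{s}_{+\infty}}=\{\Delta_1,\ldots,\Delta_n\}$, as claimed.

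The main obstacle I expect is purely bookkeeping: carefully reconciling the weight constraint $\mu_i-\mu_j\in\ker(1+w_0)$ supplied by \cite[Th\'eor\`eme 2.2]{caldero1} with the homogeneity constraint $\mu_i-\mu_j\in\ker(1-w_0)$ coming from the $X(\mathcal{H})$-grading, checking that the weight of $e_{s(\mu)}$ in the grading normalization used in Section~\ref{S:rec-QNA-localizations} is indeed $(1-w_0)(\mu)$, and translating faithfully between Caldero's notation in \cite{caldero,caldero1} and the Goodearl--Yakimov construction; once these identifications are in place the argument is routine.
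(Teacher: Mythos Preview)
Your argument is essentially the paper's own proof: recall the multiplicativity of Caldero's family $\{e_{s(\mu)}\}$, apply \cite[Th\'eor\`eme 2.2]{caldero1} to write any normal element as a sum of $e_{s(\mu_i)}$ with $\mu_i-\mu_j\in\ker(1+w_0)$, use homogeneity to add the constraint $\mu_i-\mu_j\in\ker(1-w_0)$, intersect the two kernels to force $k=1$, and finish by counting the $n$ homogeneous primes given by Theorem~\ref{homo}. One small wording issue: the implication ``irreducible in $\k[\Delta_1,\ldots,\Delta_n]$, and hence a prime element of $R$'' is not an a priori implication (irreducibility in a subalgebra does not by itself yield primeness in the ambient algebra); what actually carries you is the reverse direction (a homogeneous prime of $R$ is irreducible, hence cannot be a nondegenerate monomial, hence is some $\Delta_i$) together with the count $|\mathfrak{s}_{+\infty}|=n$, exactly as the paper does.
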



\begin{thebibliography}{10}

\bibitem{ac}
J.~Alev and M.~Chamarie.
\newblock D\'{e}rivations et automorphismes de quelques alg\`ebres quantiques.
\newblock {\em Comm. Algebra}, 20(6):1787--1802, 1992.

\bibitem{bb}
J.~Bell and L.~Buzaglo.
\newblock Enveloping algebras of derivations of commutative and noncommutative
  algebras.
\newblock arXiv:2411.17972, 2024.

\bibitem{YL09}
L.~Ben~Yakoub and A.~Louly.
\newblock D\'erivations de l'alg\`ebre {$U^+_q(B_2)$}.
\newblock {\em Beitr\"age Algebra Geom.}, 50(2):301--308, 2009.

\bibitem{bg}
K.A. Brown and K.R. Goodearl.
\newblock {\em Lectures on algebraic quantum groups}.
\newblock Advanced Courses in Mathematics. CRM Barcelona. Birkh\"auser Verlag,
  Basel, 2002.

\bibitem{caldero}
P.~Caldero.
\newblock Sur le centre de {$U\sb q({\mathfrak n}\sp +)$}.
\newblock {\em Beitr\"age Algebra Geom.}, 35(1):13--24, 1994.

\bibitem{caldero1}
P.~Caldero.
\newblock \'{E}tude des {$q$}-commutations dans l'alg\`ebre {$U\sb q({\mathfrak
  n}\sp +)$}.
\newblock {\em J. Algebra}, 178(2):444--457, 1995.

\bibitem{cauchon}
G.~Cauchon.
\newblock Effacement des d\'{e}rivations et spectres premiers des alg\`ebres
  quantiques.
\newblock {\em J. Algebra}, 260(2):476--518, 2003.

\bibitem{GG14}
M.~Gerstenhaber and A.~Giaquinto.
\newblock On the cohomology of the {W}eyl algebra, the quantum plane, and the
  {$q$}-{W}eyl algebra.
\newblock {\em J. Pure Appl. Algebra}, 218(5):879--887, 2014.

\bibitem{GW}
K.R. Goodearl and R.B. Warfield, Jr.
\newblock {\em An {I}ntroduction to {N}oncommutative {N}oetherian {R}ings},
  volume~16 of {\em London Mathematical Society Student Texts}.
\newblock Cambridge University Press, Cambridge, 1989.

\bibitem{yak2}
K.R. Goodearl and M.T. Yakimov.
\newblock Unipotent and {N}akayama automorphisms of quantum nilpotent algebras.
\newblock In {\em Commutative algebra and noncommutative algebraic geometry.
  {V}ol. {II}}, volume~68 of {\em Math. Sci. Res. Inst. Publ.}, pages 181--212.
  Cambridge Univ. Press, New York, 2015.

\bibitem{yak}
K.R. Goodearl and M.T. Yakimov.
\newblock From quantum {O}re extensions to quantum tori via noncommutative
  {UFD}s.
\newblock {\em Adv. Math.}, 300:672--716, 2016.

\bibitem{yak1}
K.R. Goodearl and M.T. Yakimov.
\newblock Quantum cluster algebra structures on quantum nilpotent algebras.
\newblock {\em Mem. Amer. Math. Soc.}, 247(1169):vii+119, 2017.

\bibitem{GY20}
K.R. Goodearl and M.T. Yakimov.
\newblock The {B}erenstein-{Z}elevinsky quantum cluster algebra conjecture.
\newblock {\em J. Eur. Math. Soc. (JEMS)}, 22(8):2453--2509, 2020.

\bibitem{ak}
A.P. Kitchin.
\newblock Derivations of quantum and involution generalized {W}eyl algebras.
\newblock {\em J. Algebra Appl.}, 23(4):Paper No. 2450076, 21, 2024.

\bibitem{sltl}
S.~Launois and T.H. Lenagan.
\newblock The first {H}ochschild cohomology group of quantum matrices and the
  quantum special linear group.
\newblock {\em J. Noncommut. Geom.}, 1(3):281--309, 2007.

\bibitem{stl}
S.~Launois, T.H. Lenagan, and L.~Rigal.
\newblock Quantum unique factorisation domains.
\newblock {\em J. London Math. Soc. (2)}, 74(2):321--340, 2006.

\bibitem{ll}
S.~Launois and S.A. Lopes.
\newblock Automorphisms and derivations of {$U_q(\mathfrak s\mathfrak l^+_4)$}.
\newblock {\em J. Pure Appl. Algebra}, 211(1):249--264, 2007.

\bibitem{lo}
S.~Launois and I.~Oppong.
\newblock Derivations of a family of quantum second {W}eyl algebras.
\newblock {\em Bull. Sci. Math.}, 184:Paper No. 103257, 43, 2023.

\bibitem{lo2}
S.~Launois and I.~Oppong.
\newblock Is ${A}_1$ of type ${B}_2$?
\newblock {\em to appear in Contemporary Mathematics.}, arXiv:2411.15946, 2024.

\bibitem{lw}
M.~Li and X.L. Wang.
\newblock Derivations and automorphisms of the positive part of the
  two-parameter quantum group {$U_{r,s}(B_3)$}.
\newblock {\em Acta Math. Sin. (Engl. Ser.)}, 33(2):235--251, 2017.

\bibitem{McConnellPettit1988}
J.C. McConnell and J.J. Pettit.
\newblock Crossed products and multiplicative analogues of {W}eyl algebras.
\newblock {\em J. London Math. Soc. (2)}, 38(1):47--55, 1988.

\bibitem{Meriaux}
A.~M\'{e}riaux.
\newblock Cauchon diagrams for quantized enveloping algebras.
\newblock {\em J. Algebra}, 323(4):1060--1097, 2010.

\bibitem{op}
J.M. Osborn and D.S. Passman.
\newblock Derivations of skew polynomial rings.
\newblock {\em J. Algebra}, 176(2):417--448, 1995.

\bibitem{Richard2002}
L.~Richard.
\newblock Sur les endomorphismes des tores quantiques.
\newblock {\em Comm. Algebra}, 30(11):5283--5306, 2002.

\bibitem{lR05}
L.~Richard.
\newblock D\'erivations des alg\`ebres de {W}eyl quantiques
  multiparam\'etr\'ees et de certaines de leurs localisations simples.
\newblock {\em Comm. Algebra}, 33(5):1383--1407, 2005.

\bibitem{xt}
X.~Tang.
\newblock Derivations of the two-parameter quantized enveloping algebra
  {$U^+_{r,s}(B_2)$}.
\newblock {\em Comm. Algebra}, 41(12):4602--4621, 2013.

\bibitem{myak}
M.~Yakimov.
\newblock On the spectra of quantum groups.
\newblock {\em Mem. Amer. Math. Soc.}, 229(1078):vi+91, 2014.

\bibitem{yakAD}
M.~Yakimov.
\newblock Rigidity of quantum tori and the {A}ndruskiewitsch-{D}umas
  conjecture.
\newblock {\em Selecta Math. (N.S.)}, 20(2):421--464, 2014.

\bibitem{zt}
Y.Y. Zhong and X.M. Tang.
\newblock Derivations of the positive part of the two-parameter quantum group
  of type {$G_2$}.
\newblock {\em Acta Math. Sin. (Engl. Ser.)}, 37(9):1471--1484, 2021.

\end{thebibliography}
\end{document}